\newcommand{\C}{{\mathbb C}}
\newcommand{\R}{{\mathbb R}}
\newcommand{\N}{{\mathbb N}}
\newcommand{\proj}{{\mathbb{P}}}
\newcommand{\Cr}{\mathcal{C}}
\newcommand{\D}{\text{D}}
\newcommand{\Case}{\text{Case }}
\newcommand{\B}{\mathbb{B}}
\newcommand{\Lr}{\mathscr{L}}
\newcommand{\A}{\mathscr{A}}
\newcommand{\F}{\mathscr{F}}
\newcommand{\supp}{{\textit{\emph{supp}}}}
\newcommand{\card}{{\textit{\emph{card}}}}
\newcommand{\id}{{\textit{\emph{id}}}}
\newcommand{\dd}{{\textit{\emph{dd}}}}
\newcommand{\e}{{\textit{\emph{e}}}}
\theoremstyle{plain}
\newtheorem{theoreme}{Theorem}[section]
\newtheorem{lemme}[theoreme]{Lemma}
\newtheorem{prop}[theoreme]{Proposition}
\newtheorem{prop/def}[theoreme]{Proposition/Definition}
\newtheorem{defi}[theoreme]{Definition}
\theoremstyle{definition}
\newtheorem{ex}[theoreme]{Example}
\newtheorem{notation}[theoreme]{Notation}
\theoremstyle{remark}
\newtheorem*{rmq}{Remark}
\author[1]{Sandrine Daurat

%\thanks{Centre Mathématiques Laurent Schwartz, Ecole Polytechnique\\ Route de Saclay 91120 Palaiseau\\e-mail : sandrine.daurat@math.polytechnique.fr}
}
\address{Sandrine Daurat, Centre Mathématiques Laurent Schwartz, Ecole Polytechnique, Route de Saclay, 91120 Palaiseau}
\email{sandrine.daurat@math.polytechnique.fr}
\title{On the size of attractors in $\proj^k$}
\date{}
\begin{document}
\maketitle

\renewcommand{\abstractname}{Résumé}
\begin{abstract} 
Soit $f$ un endomorphisme holomorphe de $\proj^k(\C)$ possédant un ensemble attractif $\A$. Dans cet article, nous nous intéressons à la``taille" de $\A$, au sens de la géométrie complexe et de la théorie du pluripotentiel.  Nous introduisons un cadre conceptuellement simple permettant d'obtenir des ensembles attractifs non algébriques. 
Nous prouvons qu'en ajoutant une condition de dimension, ces ensembles supportent un courant positif fermé avec un quasi-potentiel borné (ce qui répond à une question de T.C. Dinh). 
Ils sont donc non pluripolaires. De plus, nous montrons que les exemples sont abondants dans $\proj^2$.
 \end{abstract}
 
\renewcommand{\abstractname}{Abstract}
\begin{abstract} 
Let $f$ be a holomorphic endomorphism of $\proj^k(\C)$ having an attracting set $\A$. In this paper, we address the question of the ``size" of $\A$ in a pluripolar sense. We introduce a conceptually simple framework to have non-algebraic attracting sets.
We prove that adding a dimensional condition, these sets support a closed positive current with
bounded quasi-potential (which answers a question from T.C. Dinh). Therefore, they are not pluripolar. Moreover, the examples are abundant on $\proj^2$.
 \end{abstract}

\section*{Introduction}

In this paper, we study the dynamics of holomorphic endomorphisms of the complex projective space $\proj^k (\C)$. Denote $\proj^k (\C)$ by $\proj^k$ from now on and let $f$ be an endomorphism of $\proj^k$ of algebraic degree $d\geq 2$. 
Such a map admits a unique invariant probability measure  $\mu$ of maximal entropy $k\log (d)$, called the equilibrium measure.  The most chaotic part of the dynamics is concentrated on $\supp (\mu)$. We refer to \cite{DiSi} for an introduction to this theme of research.
However, as opposed to dimension 1, chaotic dynamics can also occur outside $\supp (\mu)$.
A basic non trivial dynamical phenomenon outside $\supp (\mu)$ is that of attracting sets and attractors.
We refer to \cite{FSdyn, FW, JW, R, Di} for some properties and examples of attracting sets and attractors for endomorphisms of $\proj^k$ and to  \cite{Dufatou} for a basic structural description of the dynamics on the Julia set.
See also \cite{BDM} for a detailed study of a class of algebraic attractors (notice that the definition of an attractors is slightly more general there).

It is quite easy to find algebraic attracting set and attractors.
On $\proj^1$, all attractors are algebraic. Let $P,Q$ be homogeneous polynomials of degree $d>2$ in $\C^2$ with a single common zero $(0,0)$, then the line at infinity $\lbrace [z:w:t]; t=0  \rbrace$ is an attracting set for $f:[z:w:t]\mapsto [P(z,w):Q(z,w):t^d]$ and it is an attractor if the Julia set of $[z:w]\mapsto [P(z,w):Q(z,w)]$ is the whole Riemann sphere, see \cite{FSdyn}.
The first example of non algebraic attractor in $\proj^2$ (resp. $\proj^k$, with $k\geq 2$) was found by M. Jonsson and B. Weickert (resp. F. Rong), see \cite{JW} (resp. \cite{R}). 
So far, most previously known examples of non algebraic attracting sets are, in a sense, of codimension 1 and occur for maps of the form $$f:[z:w:t]\mapsto [P(z,w):Q(z,w):t^d+\varepsilon R(z,w)],$$ see \cite{FSdyn, JW, R}.
A precise notion of the dimension of an attracting set can easily be formalized, see Definition \ref{def dimension} below.

T.C. Dinh \cite{Di} constructed, under some mild assumptions, a natural positive closed current supported on the attracting set. This will be referred as \textit{the attracting current}.
 J. Taflin \cite{Ta}, under additional hypotheses, proved that this current is the unique positive closed current supported on the attracting set. It remains an interesting problem to understand the ``size" of the attracting set in a potential theoretic sense. 
For instance, Dinh \cite{Di} asks whether the quasi-potential of the attracting current is always unbounded. 
For the basic example $f:[z:w:t]\mapsto [P(z,w):Q(z,w):t^d]$, the attracting set is the line $\lbrace  t=0 \rbrace$ thus it supports a unique positive closed current of bidegree (1,1) whose quasi-potential is unbounded.

In this paper, we address this problem, by introducing the concept of a mapping of small topological degree on an attracting set (see Definition \ref{def petit dt}).
This notion was inspired by iteration theory of rational maps, see [DDG1-3].
The condition of being of small topological degree provides a conceptually simple framework to provide non algebraic attracting sets of any dimension (see Proposition \ref{prop non alg}).

In codimension 1, this condition implies that the attracting set is non pluripolar. For this we prove that it supports a positive closed current with bounded quasi-potential (see Theorem \ref{th potentiel bourne}).
This answers Dinh's question by the negative. 
On the other hand, the condition of small topological degree is not sufficient in higher codimension to assure that the attracting set is non pluripolar (see Theorem \ref{th att cont in hyperplan}). 
 
The examples of small topological maps on attracting sets are abundant. More precisely we prove the following theorem :
\begin{theoreme} \label{th intro}
Denote by $\F_d$ the  quasi-projective variety of triples $(P,Q,R)$ of homogeneous polynomials of degree $d$ in $\C^2$, such that $(0,0)$ is the single common zero of $P,Q$.
There exists a Zariski open set $\Omega\subset \F_d$ such that if $(P,Q,R)\in \Omega$ then for all $\varepsilon$ small enough, $0<|\varepsilon|<\varepsilon (P,Q,R) $, the endomorphism $f$ of $\proj ^2$ defined by 
$$f:[z:w:t]\mapsto  [P(z,w):Q(z,w):t^d + \varepsilon R(z,w)]$$
admits an attracting set $\A$ on which $f$ is of small topological degree. Moreover, $\A$ support a positive closed current $\tau$ of bidegree $(1,1)$ which admits a bounded quasi-potential.
\end{theoreme}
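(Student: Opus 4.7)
My plan is to realize $\A$ as a perturbation of the line $L := \{t=0\}$, check that $f$ has small topological degree on it for $(P,Q,R)$ in a suitable Zariski open set, and then invoke Theorem~\ref{th potentiel bourne}.

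\textbf{Step 1 (Trapping region).} For $\varepsilon = 0$ the map $f_0:[z:w:t]\mapsto [P(z,w):Q(z,w):t^d]$ preserves $L$ and acts there as the degree-$d$ rational map $g_0 = [P:Q]$ of $\proj^1$; the hypothesis that $(0,0)$ is the only common zero of $P,Q$ ensures that $g_0$ is well-defined and of degree exactly $d$. Moreover $L$ is super-attracting for $f_0$: in affine coordinates transverse to $L$ the contraction rate in the $t$-direction is of order $|t|^{d-1}$, so there exists a ``horn-like'' neighborhood $V$ of $L$ in $\proj^2$ with $\overline{f_0(V)}\subset V$. By continuity of $f$ in $\varepsilon$, the inclusion $\overline{f(V)}\subset V$ persists for $|\varepsilon|<\varepsilon(P,Q,R)$ small enough. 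Set $\A := \bigcap_{n\geq 0} f^n(V)$; this is a compact attracting set, Hausdorff-close to $L$, and of codimension $1$.

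\textbf{Step 2 (Small topological degree on a Zariski open set).} Let $\pi:V\to L$ denote the natural projection along the vertical $t$-direction. At $\varepsilon=0$ it intertwines $f_0|_V$ with $g_0|_L$, and for small $\varepsilon$ it remains a near-semiconjugacy. I would exploit this structure to count the fibers of $f|_{\A}$: above a generic point of $L$, preimages under $f$ inside $V$ correspond to lifts under $g_0$, giving a ``horizontal'' degree equal to $d$ while the vertical direction is strictly contracted. This yields a topological mapping degree for $f|_\A$ strictly smaller than the global topological degree $d^2$ of $f$, in the sense of Definition~\ref{def petit dt}. The Zariski open set $\Omega$ is then obtained as the complement in $\F_d$ of the closed conditions under which this fiber count degenerates, i.e.\ the vanishing loci of finitely many resultants and discriminants built from $(P,Q,R)$ that would, for example, force $R$ to interact non-transversally with the critical/super-attracting structure of $g_0$.

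\textbf{Step 3 (Conclusion).} Once $\A$ is exhibited as a codimension-$1$ attracting set on which $f$ has small topological degree, Theorem~\ref{th potentiel bourne} directly produces a closed positive $(1,1)$-current $\tau$ supported on $\A$ with bounded quasi-potential. Proposition~\ref{prop non alg} moreover rules out $\A$ being algebraic, so in particular $\A\neq L$ and $\tau$ is a genuinely new object.

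\textbf{Main obstacle.} The essential work lies in Step 2: pinning down the algebraic conditions cutting out $\Omega$, and proving that under these conditions $f|_\A$ satisfies the precise small-topological-degree comparison of Definition~\ref{def petit dt}. Because $\A$ is a priori singular and varies with $(P,Q,R,\varepsilon)$, this demands a careful fiberwise analysis of the perturbed dynamics and a uniform control of the topological mapping degree of $f|_\A$ as a function of the parameter, so that the exceptional locus can be shown to be a proper Zariski-closed subvariety of $\F_d$.
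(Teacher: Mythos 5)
Your Step 1 matches the paper (the trapping region is $U_\rho=\lbrace |t|<\rho\max(|z|,|w|)\rbrace$ with $\rho\sim\varepsilon/\alpha$), and Step 3 is indeed just an application of Theorem \ref{th potentiel bourne}. But Step 2, which you yourself identify as the essential work, contains both a gap and a misleading heuristic. The claim that the fiber count drops below $d^2$ because ``the vertical direction is strictly contracted'' is not correct: on a fixed preimage line $l_{[z:w]}$ the $d$ preimages of a point are $[z:w:\e^{\frac{2ik\pi}{d}}t]$, which all have the same $|t|$ and hence are \emph{all} in $U_\rho$ or all outside it. Vertical contraction alone therefore never reduces the per-line count from $d$ to anything smaller; this is consistent with the fact that for $[z^2:w^2:t^2+\varepsilon z^2]$ the attracting set is a line and the topological degree on it is full. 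The actual mechanism in the paper is different: one counts preimages in the smaller set $f(U_\rho)$, whose intersection with a line is a union of $d$ small discs, and the reduction comes from (i) controlling which preimage \emph{lines} meet $U_\rho$ (the discs of $f(U_\rho)\cap l$ must be disjoint --- the set $A$), and (ii) controlling how many of the $d$ rotated preimages land back in $f(U_\rho)$ (the set $B$). These conditions fail along exceptional sets of lines $X$ and $Y$ determined by $f_\infty=[P:Q]$ and $R$, and one only gets small topological degree for the iterate $f^3$ on $f(U_\rho)$, under the combinatorial hypotheses $X\cap Y=\emptyset$ and $f_\infty^{-2}(Z)\cap f_\infty^{-1}(Z)\cap Z=\emptyset$ with $Z=X\cup Y$ (Proposition \ref{prop petit deg top}). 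Your proposal never identifies these conditions, so the ``resultants and discriminants'' cutting out $\Omega$ remain unspecified.

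A second genuine gap: a Zariski open subset of $\F_d$ could be empty, so even after showing the exceptional locus is Zariski closed you must produce an explicit $(P,Q,R)$ satisfying the conditions in every degree $d$. The paper does this with $f=[w^d+az^d:bw^d+z^d:t^d+\varepsilon(z^d+z^{d-1}w)]$ for suitable small $a,b$, computing $X_{-1}$, $Y_{-2}$ and their images explicitly. Without this step (and without the correct formulation of the conditions to verify), the theorem is not proved. Finally, note that what one obtains is that $f^3$ has small topological degree on the trapping region $f(U_\rho)$ --- i.e.\ $f$ is \emph{asymptotically} of small topological degree in the sense of Definition \ref{def petit dt} --- not a degree bound for $f$ itself on $U_\rho$; your phrasing ``topological mapping degree for $f|_\A$'' blurs this distinction.
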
 

Of course, this theorem cannot be true for every $(P,Q,R)\in \F_d$. For example, 
$ [z:w:t] \mapsto  [z^2 :w^2:t^2+\varepsilon z^2]$
is not of small topological degree on an attracting set, even for $\varepsilon \neq 0$.
Indeed, the attracting set is a line in this case.

Denote by  $T$ the Green current of $f$, see \cite{DiSi}.  
With the notation of Theorem \ref{th intro}, we have that  $\nu = T\wedge \tau$ is an invariant probability measure of maximal entropy supported in $\A$, see \cite{Di}. 
Under the conditions of Theorem \ref{th intro}, we infer that $\nu$ puts no mass on pluripolar sets.
\\

A recent work of N. Fakhruddin \cite{fak} gives alternate arguments for some of the results in this paper : genericity of non algebraic attracting sets, existence of Zariski dense attracting sets of higher codimension.
Notice that we do not use the same notion of genericity. 
 Fakhruddin proves that the set of holomorphic endomorphisms of $\proj^k$ which have no non trivial invariant set (i.e. not of zero dimension or not $\proj^k$) contains a countable intersection of Zariski open sets.
On the other hand, we work in a specific family  $\F_d$ and construct a Zariski open set of examples there. 
We also observe that the set of endomorphisms in $\proj^2$
possessing an attracting set of small topological degree is open for the usual
topology.
\\

The outline of the paper is as follows.
We start with some preliminaries.
Then we introduce the notion of small topological degree on an attracting set. We show that it is a sufficient condition to have a non algebraic attracting set and under this hypothesis, in codimension one, the attracting set is non-pluripolar.

Section 3 is devoted to the proof of Theorem \ref{th intro}.

In section 4, we will exhibit examples in $\proj^3$ of attracting sets of codimension 2. We will see, that the condition of small topological degree is not sufficient in higher codimension to assure that the attracting set is non pluripolar. 
Using Hénon-like maps of small topological degree, we will exhibit the first explicit example of a Zariski dense attracting set of codimension 2.

We finish with some remarks around Theorem \ref{th intro} and some open questions.

\section{Preliminaries}

\subsection{Attracting sets and attractors}
In this section we give some definitions and recall the framework of \cite{Di}.

\begin{defi}\label{def attracteur}
A set $\A$ is called an \textbf{attracting set} if there exists an open set $U$ such that
 $f(U)\Subset U$ and $\A=\cap_{n\geq 0} f^n(U)$. We call such an open set $U$ a \textbf{trapping region}.
If moreover $f$ is topologically mixing on $\A$ then $\A$ is called an \textbf{attractor}.
\end{defi}

The following proposition is obvious.

\begin{prop}
Let $\A$ be an attracting set. Then $\A$ is closed and $f(\A)=\A$.
\end{prop}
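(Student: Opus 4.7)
The proof is essentially formal, so my plan is mostly to unwind the definitions carefully. The only subtlety is that the definition of $\A$ is stated as $\cap_{n\geq 0} f^n(U)$; since $f(U)\Subset U$ implies $f^{n+1}(U)\subset f^n(U)$ and all these sets are relatively compact in $U$, I would first observe that $\A$ coincides with the decreasing intersection $\cap_{n\geq 0}\overline{f^n(U)}$ of nonempty compact sets. The closedness of $\A$ is then immediate.

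For the equality $f(\A)=\A$, I would prove the two inclusions separately. The inclusion $f(\A)\subset \A$ is a direct manipulation: since $\A\subset f^n(U)$ for every $n\geq 0$, one has
\[
f(\A)\subset \bigcap_{n\geq 0} f\bigl(f^n(U)\bigr)=\bigcap_{n\geq 0} f^{n+1}(U)=\bigcap_{n\geq 1} f^n(U),
\]
and this last set equals $\A$ because the sequence $f^n(U)$ is decreasing.

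The reverse inclusion $\A\subset f(\A)$ is the only place where a real argument is needed, and it is where I expect the main, though mild, obstacle: one must exhibit a preimage of each point of $\A$ that actually lies in $\A$. My plan is a compactness/diagonal argument. Fix $x\in\A$. For every $n\geq 0$ we have $x\in f^{n+1}(U)$, so there exists $y_n\in f^n(U)$ with $f(y_n)=x$. For $n\geq 1$ all the $y_n$ lie in the compact set $\overline{f(U)}$, so a subsequence $y_{n_k}$ converges to some $y\in\overline{f(U)}$. Continuity of $f$ gives $f(y)=x$. For any fixed $m$, we have $y_{n_k}\in f^{n_k}(U)\subset f^m(U)$ as soon as $n_k\geq m$, hence $y\in\overline{f^m(U)}$; taking the intersection over $m$ yields $y\in\A$, and therefore $x=f(y)\in f(\A)$.

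Combining the two inclusions gives $f(\A)=\A$, completing the proof. No ingredient beyond the definition of attracting set, continuity of $f$, and the compactness of $\overline{f(U)}$ is required.
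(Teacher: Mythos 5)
Your proof is correct. The paper itself gives no argument (it declares the proposition obvious), and your treatment — identifying $\A$ with the nested intersection of compacts $\overline{f^n(U)}$ via $\overline{f^{n+1}(U)}\subset f^n(\overline{f(U)})\subset f^n(U)$, and then a standard compactness extraction for $\A\subset f(\A)$ — is exactly the intended elementary reasoning.
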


\begin{defi} \label{def dimension}
An attracting set in $\proj^k$ is said to be of \textbf{dimension} $k-m$ if it supports a positive closed current of bidimension $(k-m,k-m)$ (i.e. of bidegree $(m,m)$) but none of bidimension $(l,l)$ with $l>k-m$.
\end{defi}
~

\begin{rmq}
~
\begin{itemize}
\item Equivalently, an attracting set $\A \subset \proj^k$ is said to be of dimension $k-m$ if  any (or one) trapping region of $\A$ supports a (smooth) positive closed current of bidimension $(k-m,k-m)$ but none of bidimension $(l,l)$ with $l>k-m$. 

\item If $\A$ is of dimension $k-m$ then the Hausdorff dimension of $\A$ is at least $2(k-m)$.
\end{itemize}
\end{rmq}

The proof of the following elementary proposition is left to the reader. 
\begin{prop}\label{prop dimension}
If $\A$ is an algebraic attracting set of dimension $k-m$ then Definition \ref{def dimension} is equivalent to the classical definition of dimension for algebraic sets.
\end{prop}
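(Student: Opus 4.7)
My plan is to prove the proposition via a two-sided inequality between $p$, the classical complex dimension of the algebraic set $\A$, and the integer $k-m$ provided by Definition \ref{def dimension}; the goal is to show that these two integers agree.

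For the lower bound $k-m \geq p$, I would pick an irreducible component $V \subset \A$ of maximal complex dimension $p$ and invoke Lelong's theorem: integration on the regular part of $V$ extends to a positive closed current $[V]$ on $\proj^k$ of bidimension $(p,p)$ with support contained in $V \subset \A$. This exhibits an explicit positive closed current on $\A$ of the required bidimension and presents no real difficulty.

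For the upper bound $k-m \leq p$, I would argue by contradiction. Suppose some nonzero positive closed current $T$ of bidimension $(l,l)$ with $l>p$ has $\supp T \subset \A$. Let $\omega$ denote the Fubini--Study form on $\proj^k$ and consider the trace measure $\sigma_T := T \wedge \omega^l$, which is a nonzero positive Radon measure supported in $\A$. The classical potential-theoretic input I would invoke is that $\sigma_T$ is absolutely continuous with respect to the $2l$-dimensional Hausdorff measure on $\proj^k$; in particular $\supp T$ has real Hausdorff dimension at least $2l$. Since $\A$ is algebraic of complex dimension $p$, it is a finite union of irreducible algebraic subvarieties of complex dimension at most $p$, so its real Hausdorff dimension is exactly $2p$. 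Combining, $2l \leq 2p$, contradicting $l > p$.

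The only delicate ingredient is this Hausdorff dimension lower bound for the support of a positive closed current of bidimension $(l,l)$, for which I would cite Demailly's text on complex analytic and differential geometry (Chapter III, based on Siu's work on analyticity of level sets of Lelong numbers). Everything else is essentially bookkeeping, consistent with the expectation that the proposition is elementary.
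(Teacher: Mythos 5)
Your two inequalities are both established correctly, and the overall strategy is sound. The paper itself leaves this proposition to the reader, so there is no written proof to compare against; let me evaluate your argument on its own terms.

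The lower bound $k-m\geq p$ via Lelong's extension theorem for $[V]$ on a top-dimensional irreducible component is exactly right and needs no comment. For the upper bound, your Hausdorff-dimension argument is correct: the monotonicity formula gives, for a positive closed $T$ of bidimension $(l,l)$, the uniform upper Frostman bound $\sigma_T(B(x,r))\leq C r^{2l}$, which forces $\sigma_T$ to vanish on $\mathcal{H}^{2l}$-null sets, while an algebraic set of complex dimension $p<l$ satisfies $\mathcal{H}^{2l}(\A)=0$. This is watertight. Two remarks. First, the key ingredient here is Lelong's monotonicity/density theorem, not Siu's theorem on analyticity of Lelong sublevel sets; both live in Chapter III of Demailly's book, so the chapter reference is fine, but the attribution to Siu is misleading and should be dropped. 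Second, there is a more direct and more standard route to the same upper bound that bypasses Hausdorff measure entirely, namely the support theorem for positive closed currents (Demailly, Chapter III): a positive closed current of bidimension $(l,l)$ supported in an analytic subset of complex dimension $<l$ is necessarily zero. Applying this with $l=k-m$ (whose existence is guaranteed by Definition \ref{def dimension}) immediately gives $k-m\leq p$. Since the author cites Demailly for background and calls the proposition elementary, this is almost certainly the intended argument; your Hausdorff-dimension approach is essentially an unrolled proof of that support theorem and so is morally the same, just with more machinery exposed.
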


\begin{ex}
Let $f$ be an endomorphism of $\proj^k$ and let $U$ be an open set such that $f(U)\Subset U$.
We assume that there exists two projective subspaces $I$ and $J$ of $\proj^k$ of dimension $m-1$ and $k-m$, $1\leq m \leq k-1$, such that $J\subset U$ and  $I\cap U=\emptyset$.
Then $\A=\bigcap f^n(U)$ is of dimension $k-m$. In fact, $U$ supports the positive closed current $[J]$ of bidimension $(k-m,k-m)$ but does not support a positive closed current of bidimension $(l,l)$ with $l>k-m$.
Indeed, the support of any positive closed current $T$ of bidimension $(l,l)$ with $l>k-m$ intersects  $I$. \qed
\end{ex}

Before stating the main result of \cite{Di}, we need some notation. Let $\pi : \proj^k \setminus I \rightarrow J$ be the projection of center $I$.
More precisely,
letting $I(x)$ be the projective space, of dimension $m$, containing $I$ and passing through a point $x\in \proj^k \setminus I$, then $\pi(x)$ is the 
unique intersection point between $J$ and $I(x)$. We consider the point $\pi(x)$ as the origin of the complex vector space $I(x)\setminus I\simeq \C^m$,
where $I$ is viewed as the hyperplane at infinity of $I(x)\simeq \proj^m$. In other words, $\proj^k \setminus I$ is viewed as a vector bundle over 
$J$. If $x\in J$, we have that $\pi(x)=x$. We suppose that
\begin{equation}\label{cond dinh}
 \text{the open set } U\cap I(x) \text{ in } I(x)\setminus I\simeq \C^m \text{ is star-shaped at } x \text{ for every } x\in J.
\end{equation}
If $I$ is a point and $J$ is a projective hyperplane, i.e. $m=1$, the previous hypothesis is equivalent to the property that the open subset $\proj^k\setminus U$ is star-shaped at $I$.

\begin{theoreme}[Dinh]\label{th dinh} 
Let $f$ and $U$ be as above. Let $S$ be a  positive closed $(m,m)$-form of mass 1 with continuous coefficient  and with compact support in $U$.
 Then the sequence $\left( \frac{1}{d^{k-m}}(f^n)_* S\right)$ converges to a positive closed current $\tau$ of mass 1, with support in $\A=\bigcap f^n(U)$.
The current $\tau$ does not depend on $S$.
Moreover, it is woven,
 forward invariant (i.e. under $\frac{1}{d^{k-m}}f_*$) and is extremal in the cone of invariant positive closed currents of bidegree $(m,m)$ with support in $\A$.
\end{theoreme}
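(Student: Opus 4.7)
The plan is to follow the Dinh-Sibony scheme for dynamically defined currents, adapted to the non-invariant attracting-region setting. First, I record the cohomological behaviour of $f_*$ on $(m,m)$-currents: since $f^*[\omega]=d[\omega]$ on $H^{1,1}(\proj^k)$ and $f_*\circ f^*=d^k\,\mathrm{id}$ (topological degree), one has $f_*[\omega^m]=d^{k-m}[\omega^m]$. Thus $\Lambda:=\frac{1}{d^{k-m}}f_*$ preserves the cohomology class, and hence the mass, of positive closed $(m,m)$-currents on $\proj^k$. Consequently $\Lambda^n S$ has mass $1$ for all $n$, and since $\supp(\Lambda^n S)\subset f^n(U)\subset U$, any weak cluster value is a positive closed $(m,m)$-current of mass $1$ supported in $\A=\bigcap_n f^n(U)$.

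Second, to prove that $\Lambda^n S$ actually converges and that the limit does not depend on $S$, I would use a $dd^c$-telescoping argument. The hypothesis \eqref{cond dinh}, together with $J\subset U$, allows one to choose a reference smooth positive closed $(m,m)$-form $S_0$ of mass $1$ with compact support in $U$ and cohomologous to $\omega^m$ (for instance a smooth regularization of $[J]$ produced fibrewise using the star-shaped structure of the $\pi$-fibres in $U$). For any other admissible $S$, the $dd^c$-lemma yields a smooth $(m-1,m-1)$-form $R$ with $S-S_0=dd^c R$, and then
\[
\Lambda^n S-\Lambda^n S_0 \;=\; \frac{1}{d^{n(k-m)}}\,dd^c\bigl((f^n)_* R\bigr).
\]
The heart of the proof is to show that the right-hand side tends to $0$ weakly. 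For $m=1$, $R$ is a function and one controls $(f^n)_* R$ by direct $L^1$-type estimates, using that iteration by $f$ mixes mass inside $U$; for general $m$, the same mechanism is implemented via the Dinh-Sibony formalism of super-potentials. I expect this potential-theoretic step, especially for $m\geq 2$, to be the main technical obstacle.

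Once the limit $\tau$ is constructed, the forward invariance $\Lambda\tau=\tau$ is immediate from the continuity of $\Lambda$ on currents of bounded mass, and the support property $\supp\tau\subset\A$ has already been observed. The woven structure is read off from the pushforward formula: outside the critical values of $f^n$, the current $(f^n)_* S$ decomposes as a sum of pullbacks of $S$ along the local inverse branches of $f^n$ inside $U$, hence as an integral of currents of integration over holomorphic graphs; passing to the limit exhibits $\tau$ as a superposition of such dynamically defined graphs, which is precisely the definition of a woven current.

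Finally, for extremality, suppose $\tau=\tau_1+\tau_2$ with each $\tau_i$ positive closed, invariant under $\Lambda$, of bidegree $(m,m)$, and supported in $\A$. I would regularize each $\tau_i$ by smooth positive closed $(m,m)$-forms $S_i^\varepsilon$ of the same cohomology class with compact support in $U$ (again using \eqref{cond dinh} to produce such regularizations), and apply the independence-of-$S$ convergence from the second step to $S_i^\varepsilon$; combined with the $\Lambda$-invariance of $\tau_i$, this forces each $\tau_i$ to be proportional to $\tau$, yielding extremality in the cone of invariant positive closed $(m,m)$-currents supported in $\A$.
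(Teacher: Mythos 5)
The paper does not prove this theorem: it is stated with the attribution [Dinh] and cited from [Di], so there is no in-paper argument to compare your proposal against. Evaluating your attempt on its own, the easy parts are sound---the cohomological computation $f_*[\omega^m]=d^{k-m}[\omega^m]$, the mass and support statements, and the forward invariance of any weak limit. But the heart of the theorem---that $\Lambda^n S$ converges and that the limit is independent of $S$---is exactly what you defer as ``the main technical obstacle,'' so the attempt is incomplete precisely where the content lies. For $m\geq 2$, the $(m-1,m-1)$-object $R$ coming from the $dd^c$-lemma is a current, not a function, and $(f^n)_*R$ has no a priori mass control; proving $d^{-n(k-m)}dd^c(f^n)_*R\to 0$ is essentially the whole of Dinh's argument, carried out with DSH/super-potential estimates. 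Moreover, the role of hypothesis \eqref{cond dinh} in Dinh's proof is misattributed: it is not used merely to regularize $[J]$ into a reference form $S_0$, but to construct a continuous family of $(m,m)$-currents (a ``structural disc'') that deforms any admissible $S$ to a fixed one \emph{within} $U$, with control of the relevant norms along the deformation; pushing this family forward is what makes the telescoping series converge.

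Your extremality argument also has a real gap. As written it would establish that \emph{every} $\Lambda$-invariant positive closed $(m,m)$-current of mass $1$ supported in $\A$ equals $\tau$, i.e.\ uniqueness---which, as the paper notes citing Taflin [Ta], needs additional hypotheses beyond Dinh's. The specific flaw is a two-parameter limit with no uniformity: for fixed $\varepsilon$ you have $\Lambda^n S_i^\varepsilon\to\tau$ as $n\to\infty$; for fixed $n$, weak continuity of $\Lambda^n$ gives $\Lambda^n S_i^\varepsilon\to\Lambda^n\tau_i=\tau_i$ as $\varepsilon\to 0$; and concluding $\tau_i=\tau$ from these two facts requires a uniform rate in one of the variables, which you neither state nor possess.
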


In the sequel we will refer to $\tau$ as \textit{the attracting current} of $\A$.

\subsection{Pluri-potential theory}

Here we focus on the codimension 1 case. Let $f$ be an endomorphism of $\proj^k$ and $\Lr$ be the normalised push forward operator, i.e. $\Lr=\frac{1}{d^{k-1}} f_*$.

Let $T$ be a positive closed current  of bidegree $(1,1)$ of mass 1.
 There exists a quasi-psh function $u$, i.e. $u$ is locally the difference of a plurisubharmonic function (psh for short) and a smooth function, such that $T-\omega_{FS}=dd^c u$.
We call such a function $u$ a quasi-potential of $T$. 

A dsh function is the difference of two quasi-psh functions, see Appendix A.4. of \cite{DiSi} for  properties of dsh functions and \cite{De} for basics on psh function.
Let $R,S$ be two positive closed currents on $\proj^k$ of bidegree (1,1) with the same mass then there exists a dsh function $w$ such that $R-S=\dd^c w$.

\begin{notation} 
Let $S,R$ be positive closed currents of bidegree (1,1) with mass 1. We denote by $u_{S,R}$ the unique dsh function  such that $S-R=\dd^c u_{S,R}$ and $\int_{\proj^k} u_{S,R} \, \omega_{FS}^k=0$.
\end{notation}

\begin{lemme}\label{lemme CV pot}
If $(S_i,R_i)$ converges, in the sense of currents, toward $(S,R)$ then $(u_{S_i,R_i})$ converges, in $L^1$ and
 in sense of distributions, toward $u_{S,R}$. 
\end{lemme}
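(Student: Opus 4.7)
The plan is to reduce the lemma to the standard compactness of normalized $\omega_{FS}$-plurisubharmonic functions on $\proj^k$. Since each of the currents $S_i$, $R_i$, $S$, $R$ is a positive closed $(1,1)$-current of mass $1$, there exist unique quasi-psh functions $\phi_i,\psi_i,\phi,\psi$ of zero $\omega_{FS}^k$-average such that
\[
S_i=\omega_{FS}+\dd^c\phi_i,\ R_i=\omega_{FS}+\dd^c\psi_i,\ S=\omega_{FS}+\dd^c\phi,\ R=\omega_{FS}+\dd^c\psi,
\]
with $\phi_i,\psi_i,\phi,\psi$ all $\omega_{FS}$-psh. Then $\phi_i-\psi_i$ is dsh, has zero average and satisfies $\dd^c(\phi_i-\psi_i)=S_i-R_i$, so by the uniqueness built into the notation one has $u_{S_i,R_i}=\phi_i-\psi_i$ and likewise $u_{S,R}=\phi-\psi$. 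It therefore suffices to prove $\phi_i\to\phi$ and $\psi_i\to\psi$ in $L^1(\proj^k)$.

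For this I would invoke the classical fact that the family of $\omega_{FS}$-psh functions on $\proj^k$ with zero $\omega_{FS}^k$-average is compact in $L^1(\proj^k)$ (see for instance Appendix A.4 of \cite{DiSi} or the standard reference \cite{De}). Given any subsequence of $(\phi_i)$, compactness yields a sub-subsequence $\phi_{i_k}\to\phi_\infty$ in $L^1$, with $\phi_\infty$ an $\omega_{FS}$-psh function of zero average. Since $\dd^c$ is continuous for the weak topology on distributions, passing to the limit in $\omega_{FS}+\dd^c\phi_{i_k}=S_{i_k}$ gives $\omega_{FS}+\dd^c\phi_\infty=S$. By uniqueness of the normalized potential, $\phi_\infty=\phi$. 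The usual subsequence principle then forces the full sequence $\phi_i$ to converge to $\phi$ in $L^1$; the argument for $\psi_i$ is identical. Consequently $u_{S_i,R_i}=\phi_i-\psi_i\to\phi-\psi=u_{S,R}$ in $L^1(\proj^k)$, which a fortiori entails convergence in the sense of distributions.

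This is a routine pluripotential argument, so there is no genuine obstacle; the only point worth emphasizing is that the compactness/limit argument is carried out separately on the quasi-psh potentials of $S_i$ and $R_i$, rather than directly on the dsh function $u_{S_i,R_i}$, since the compactness of normalized dsh functions is less immediately usable than its quasi-psh counterpart.
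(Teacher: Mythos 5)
Your proof is correct and follows essentially the same route as the paper: both reduce the statement to the decomposition $u_{S,R}=u_{S,\omega_{FS}}-u_{R,\omega_{FS}}$ together with the continuity of $S\mapsto u_{S,\omega_{FS}}$. The paper simply cites Theorem A.40 of \cite{DiSi} for that continuity, whereas you reprove it via the standard $L^1$-compactness of normalized quasi-psh functions and the uniqueness of the normalized potential.
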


\begin{proof}
By Thoerem A.40 of \cite{DiSi}, $u_{S,\omega_{FS}}$ depends continuously on $S$. As $u_{S,R} = u_{S,\omega_{FS}} - u_{R,\omega_{FS}}$ the proof is complete.
\end{proof}

\begin{lemme}\label{lemme forward pot}
Let $R,S$ be as above, then $\Lr \dd^c u_{S,R} = \dd^c  \frac{1}{d^{k-1}} f_* u_{S,R}$ and
 $\frac{1}{d^{k-1}} f_* u_{S,R}(x)=\frac{1}{d^{k-1}}\underset{f(y)=x}{\sum} u_{S,R}(y)$.
\end{lemme}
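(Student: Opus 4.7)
The plan is to verify the two identities separately: the intertwining relation $\Lr \dd^c u_{S,R} = \dd^c(\frac{1}{d^{k-1}} f_* u_{S,R})$ is a general fact about push-forward under a holomorphic map, while the pointwise formula $f_* u_{S,R}(x) = \sum_{f(y)=x} u_{S,R}(y)$ is the standard change-of-variable identity extended from smooth to dsh functions.

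First I would treat the commutation $f_*\dd^c u = \dd^c f_* u$ for any dsh function $u$. Since $f:\proj^k \to \proj^k$ is a proper holomorphic map, $f^*$ commutes with $\dd^c$ on smooth forms. Then, for any smooth test $(k-1,k-1)$-form $\psi$ on $\proj^k$, duality gives
\[
\langle \dd^c f_* u, \psi\rangle = \langle f_* u, \dd^c \psi\rangle = \langle u, f^* \dd^c \psi\rangle = \langle u, \dd^c f^* \psi\rangle = \langle \dd^c u, f^*\psi\rangle = \langle f_* \dd^c u, \psi\rangle,
\]
which proves the first identity after dividing by $d^{k-1}$.

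Next I would establish the pointwise formula. For a smooth function $u$, apply the usual change-of-variable formula: $f$ is a ramified covering of topological degree $d^k$, so for any smooth test form $\phi$ of top bidegree,
\[
\int_{\proj^k} u \cdot f^*\phi = \int_{\proj^k} \Big(\sum_{f(y)=x} u(y)\Big)\,\phi(x),
\]
where preimages are counted with multiplicity. This identifies $f_* u(x)$ with $\sum_{f(y)=x} u(y)$ almost everywhere, hence as distributions. To extend to the dsh function $u = u_{S,R}$, I would regularize: by Dinh-Sibony's convolution with automorphisms of $\proj^k$ one can write $S = \lim S_\epsilon$, $R = \lim R_\epsilon$ with $S_\epsilon, R_\epsilon$ smooth positive closed of mass one. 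By Lemma \ref{lemme CV pot}, $u_{S_\epsilon, R_\epsilon} \to u_{S,R}$ in $L^1$. The identity $f_* u_{S_\epsilon, R_\epsilon}(x) = \sum_{f(y)=x} u_{S_\epsilon, R_\epsilon}(y)$ holds for each $\epsilon$, and since $f_*$ is continuous on $L^1$, passing to the limit yields the desired equality almost everywhere (and then everywhere outside a pluripolar set, where dsh functions are well-defined by their upper-semicontinuous representative).

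The main obstacle lies in making sense of the right-hand side $\sum_{f(y)=x} u_{S,R}(y)$ for a dsh function, which is only defined off a pluripolar polar set. The point is that the sum must be interpreted on the complement of a pluripolar set containing the critical values of $f$ and the polar loci of the quasi-psh components of $u_{S,R}$; the identity then holds on this complement, and this is enough to characterize $f_* u_{S,R}$ as an $L^1$ (dsh) function on $\proj^k$.
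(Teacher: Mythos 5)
Your proposal is correct, but it takes a genuinely different route from the paper. The paper works locally: it chooses a small open set $V$ over which one has a local psh potential $u$ on $f^{-1}(V)$, regularizes $u$ by a \emph{decreasing} sequence of smooth psh functions, verifies the pushforward formula off the critical values $f(C_f)$ (using that $f$ is a local biholomorphism there), and then extends across the pluripolar set $f(C_f)$ via the standard psh extension theorem; the case of the dsh function $u_{S,R}$ is then reduced to the psh case by writing $u_{S,R}$ locally as a difference of two local potentials up to a pluriharmonic term. You instead argue globally: the intertwining $f_*\dd^c = \dd^c f_*$ is established once and for all by duality on $\proj^k$, and the pointwise formula is obtained from the coarea/change-of-variables formula for the ramified covering $f$. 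Each approach has a modest advantage: the paper's local method controls the psh representative pointwise through the decreasing regularization (useful because the later Lemma \ref{lemme potenitel bourne} needs sup-norm bounds on partial sums $\sum_{y\in U_1, f(y)=x} u_{S,R}(y)$), while your global argument is shorter and isolates the general distributional identity cleanly.

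Two small caveats about your version, neither fatal. First, the regularization of $S,R$ by Dinh--Sibony's convolution with automorphisms, followed by Lemma \ref{lemme CV pot}, is an unnecessary detour: the coarea formula $\int u\, f^*\phi=\int\bigl(\sum_{f(y)=x}u(y)\bigr)\phi$ already holds for all $u\in L^1$, not just smooth $u$, so you get $f_*u_{S,R}(x)=\sum_{f(y)=x}u_{S,R}(y)$ almost everywhere directly. (Indeed, attempting to pass to the limit through the regularization risks circularity: the assertion that $x\mapsto \sum_{f(y)=x}u_\epsilon(y)$ converges in $L^1$ to $x\mapsto \sum_{f(y)=x}u_{S,R}(y)$ is exactly the $L^1$-boundedness of the operator $u\mapsto\sum_{f(y)=\cdot}u(y)$, which is the coarea formula again.) Second, your closing remark about the ``upper-semicontinuous representative'' of a dsh function is imprecise: a dsh function is a difference of quasi-psh (hence u.s.c.) functions and is generally neither u.s.c.\ nor l.s.c.; what one actually has is a canonical representative defined quasi-everywhere, which is enough to give a meaning to the sum $\sum_{f(y)=x}u_{S,R}(y)$ outside a pluripolar set containing $f(C_f)$ and the images of the polar loci, and this suffices for the use made of the lemma in the sequel.
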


\begin{rmq}
In the sum $\underset{f(y)=x}{\sum} u_{S,R}(y)$, the preimages are counted with multiplicity. As $f$ is a 
holomorphic endomorphism, if $u$ is continuous map then $x\mapsto\underset{f(y)=x}{\sum} u(y)$ is 
continuous.
\end{rmq}

\begin{proof}
We just need to prove this locally in $\proj^k$.
 As $f$ is finite, if $V$ is a small enough open set there exists a psh function $u$, defined on $U=f^{-1}(V)$, such that $T_{|U}=dd^c u$. 

\begin{lemme}
With the previous notation, $x\mapsto v(x)=\frac{1}{d^{k-1}}\underset{f(y)=x}{\sum} u (y)$
is psh on $V$ and $\left(\Lr T\right)|_V=\dd^c v$. 
\end{lemme}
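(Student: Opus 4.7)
The plan is to prove both assertions locally on $V$, exploiting that $f$ is a proper finite surjective holomorphic map of topological degree $d^k$ from $U$ onto $V$. Denote by $C\subset U$ the critical set of $f|_U$ and by $C'=f(C)\subset V$ the (proper) analytic subset of critical values. On $V\setminus C'$, $f$ restricts to an unramified $d^k$-sheeted covering, whereas at a critical value $x_0\in C'$ the fiber $f^{-1}(x_0)$ has fewer than $d^k$ points but the local intersection multiplicities still sum to $d^k$.

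To show that $v$ is plurisubharmonic on $V$, I first establish it on $V\setminus C'$. For a small enough ball $B\subset V\setminus C'$, one decomposes $f^{-1}(B)=\bigsqcup_{i=1}^{d^k}B_i$ with each $f|_{B_i}:B_i\to B$ a biholomorphism, giving on $B$
\[
v(x)=\frac{1}{d^{k-1}}\sum_{i=1}^{d^k}u\circ (f|_{B_i})^{-1}(x),
\]
which is a finite sum of pullbacks of the psh function $u$ by holomorphic maps, hence psh. Since $u$ is locally bounded above on $U$ and the fibers of $f$ are finite, $v$ is locally bounded above on $V$. As $C'$ is a proper analytic subset, the standard extension theorem for psh functions across thin sets furnishes a unique psh extension $\tilde v$ of $v|_{V\setminus C'}$ to $V$. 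Then I would verify that $\tilde v$ coincides pointwise with the $v$ defined by the explicit multiplicity-weighted sum on $C'$, using that the preimages of $x_n\to x_0$ cluster around the points of $f^{-1}(x_0)$ with the correct multiplicities, combined with the upper semi-continuity of $u$.

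To verify $\Lr T|_V=\dd^c v$, I test both sides against a smooth, compactly supported $(k-1,k-1)$-form $\varphi$ on $V$. Using $T|_U=\dd^c u$, the commutation $f^*\dd^c=\dd^c f^*$ (valid since $f$ is holomorphic), the defining adjunction $\langle f_*S,\varphi\rangle=\langle S,f^*\varphi\rangle$, and the change-of-variables identity $\int_V(f_*u)\,\psi=\int_U u\,f^*\psi$ for the pushforward of a function by a finite holomorphic map, we get
\[
\langle \dd^c v,\varphi\rangle=\int_V v\,\dd^c\varphi=\tfrac{1}{d^{k-1}}\int_U u\,f^*(\dd^c\varphi)=\tfrac{1}{d^{k-1}}\int_U u\,\dd^c(f^*\varphi)=\tfrac{1}{d^{k-1}}\langle T,f^*\varphi\rangle=\langle \Lr T|_V,\varphi\rangle,
\]
which gives the desired equality of currents.

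The main obstacle is the pointwise step: checking that the multiplicity-weighted sum defining $v$ truly coincides with the psh extension $\tilde v$ at points of $C'$ (not merely almost everywhere or in the sense of currents). This hinges on a careful multiplicity count near the critical points of $f$, and on the upper semi-continuity of $u$; once this agreement is secured, both conclusions of the lemma follow from the computation above.
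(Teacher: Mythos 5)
Your strategy for plurisubharmonicity of $v$ tracks the paper's — show $v$ psh outside the critical values $f(C_f)$ via the local sheet decomposition, then extend across that thin analytic set and identify the extension with the multiplicity-weighted sum — but you omit the one technical move that makes the identification painless, and your substitute for it is incomplete. The paper begins with ``Up to taking a decreasing regularisation, we may assume that $u$ is smooth.'' Once $u$ is continuous, so is $x \mapsto \sum_{f(y)=x} u(y)$ (this is exactly what the Remark preceding the Lemma emphasizes), and a continuous function that coincides with the psh extension on the dense open set $V \setminus f(C_f)$ must coincide with it everywhere. The general statement then follows by passing to the decreasing limit, using that decreasing limits of psh functions are psh and that $dd^c$ is continuous along decreasing sequences.

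You instead try to handle general psh $u$ directly, and the step you flag as ``the main obstacle'' is indeed a gap. Upper semi-continuity of $u$ together with the clustering of preimages with the correct multiplicities gives only one inequality: $\tilde v(x_0) = \limsup_{x\to x_0} v(x) \le v(x_0)$ for $x_0 \in f(C_f)$. The reverse inequality — that the sub-mean value of $\tilde v$ on shrinking spheres actually attains $v(x_0)$ — is not a consequence of upper semi-continuity; it requires either a branched-covering analysis along generic complex lines through $x_0$ or, as in the paper, reduction to the continuous case. As written, the argument does not establish $\tilde v = v$ pointwise on $f(C_f)$. By contrast, your integration-by-parts verification of $(\mathscr{L}T)|_V = dd^c v$ is essentially sound once the first half is secured: it uses the $L^1_{\mathrm{loc}}$ identification of $v$ with the distributional pushforward $\frac{1}{d^{k-1}}f_* u$ (they agree off the null set $f(C_f)$), and properness of $f : U \to V$ so that $f^*\varphi$ has compact support.
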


This is classical. We recall the proof for completeness.

\begin{proof}
Up to taking a decreasing regularisation, we may assume that $u$ is smooth.
Denote by $C_f$ the critical set of $f$.
As  $f_*dd^c u=dd^c f_* u$, \cite[Ch.1 Theorem (2.14)]{De}, and $f$ is a submersion on $U\setminus C_f$, we have that 
\begin{equation}\label{eq pot}
f_* u(x)=\underset{f(y)=x}{\sum} u (y) \text{ on } V\setminus f(C_f),
\end{equation}
 see \cite[Ch.1 (2.15)]{De}. Outside $C_f$, the map $f$ is locally a biholomorphism so $f_* u$ is psh on $V\setminus f(C_f)$.
 Moreover, $f(C_f)$ is pluripolar and $x\mapsto\underset{f(y)=x}{\sum} u(y)$ is continuous on $V$.
Recall that a locally bounded psh function on $V\setminus f(C_f)$  admits a unique psh extension on $V$. 
%and for all $x\in V$ we have that  
%$f_* u(x)=\underset{r\rightarrow 0}{\lim}\int_{S^{k-1}} f_* u(x+r\xi) \, \text{d}\xi$.
Thus the equation  \eqref{eq pot} is true for all $x\in V$.
\end{proof}

We now finish the proof of Lemma \ref{lemme forward pot}. There exists two psh functions $u,v$ on $U$ such that $R_{|U}=\dd^c u$ and $S_{|U}=\dd^c v$.
So, on $U$, we have that $\dd^c(u-v)=\dd^c (u_{S,R})$, thus there exists a pluri-harmonic function $h$ such that $u+h-v=u_{S,R}$.
 Then $u+h$ and $v$ are psh functions such that $R_{|U}=\dd^c (u+h)$ and $S_{|U}=\dd^c v$. By the preceding lemma, we have that $\frac{1}{d^{k-1}}f_*R-\frac{1}{d^{k-1}}f_*S=\dd^c \frac{1}{d^{k-1}}f_*u_{S,R} $ on $f(U)$, with $\frac{1}{d^{k-1}}f_*u_{S,R}(x)=\frac{1}{d^{k-1}}\underset{f(y)=x}{\sum} u_{S,R} (y)$.
\end{proof}

\section{Mappings of small topological degree  on an attracting set}

In this section, we introduce the notion of being of small topological degree on an attracting set in $\proj^k$,
 prove that such an attracting set is never algebraic and that, in codimension 1, it is non pluripolar.

\begin{defi}
Let $f$ be an endomorphism of $\proj ^k$ of algebraic degree $d$ and let $U$ be a trapping region for an attracting set $\A$ of dimension $k-m$. The endomorphism $f$ is said to be of small topological degree on $U$ 
if the number of preimages in $U$ of any point belonging to $f(U)$ is strictly less than $d^{k-m}$.
The endomorphism $f$ is said to be \textbf{asymptotically of small topological degree} on $U$ if for all $p\in f(U)$ we have that $\overline{\lim} \left(\card (f^{-n}(p)\cap U)\right)^{1/n} <d^{k-m}$.
\end{defi}

\begin{rmq}
The notion of being of small topological degree depends on the choice of $U$. We will encounter in Section \ref{section exemples} examples where $f^3$ is of small topological degree on $f(U)$ but not on $U$.
\end{rmq}

The proof of the following proposition is left to the reader.

\begin{prop}
The definition of asymptotic small topological degree does not depend on the choice of the trapping region. 

Moreover, if $f$ is asymptotically of small topological degree  then for each trapping region $U$ there exists $n\geq 1$ such that $f^n$ is of small topological degree on $U$.
\end{prop}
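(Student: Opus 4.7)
My plan is to treat the two assertions separately, both via manipulations of the preimage counts $b_n(p) := \card(f^{-n}(p) \cap U)$. For the independence, given two trapping regions $U$ and $V$ for $\A$, the compact sets $\overline{f^n(U)}$ decrease to $\A \subset V$ (open), so by compactness there exists $N \geq 1$ with $f^N(U) \subset V$ and, symmetrically, $f^N(V) \subset U$. For $p \in f(V)$, set $p' := f^N(p) \in f^{N+1}(V) = f(f^N(V)) \subset f(U)$, so that the hypothesis on $U$ applies to $p'$. Any $y \in f^{-n}(p) \cap V$ has $f^N(y) \in f^N(V) \subset U$ and $f^n(f^N(y)) = f^N(p) = p'$, hence $f^N(y) \in f^{-n}(p') \cap U$. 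Since fibers of $f^N : \proj^k \to \proj^k$ have cardinality at most $d^{Nk}$, we obtain
\[
\card(f^{-n}(p) \cap V) \leq d^{Nk} \cdot \card(f^{-n}(p') \cap U),
\]
and taking $n$-th roots (using $d^{Nk/n} \to 1$) gives $\overline{\lim}_n \card(f^{-n}(p) \cap V)^{1/n} \leq \overline{\lim}_n b_n(p')^{1/n} < d^{k-m}$.

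For the second claim, set $B_n := \sup_p b_n(p) \leq d^{nk}$. The trapping condition $f(U) \subset U$ gives the bijective decomposition $f^{-(n+m)}(p) \cap U = \bigsqcup_{q \in f^{-m}(p) \cap U} f^{-n}(q) \cap U$, whence $b_{n+m}(p) \leq B_n \cdot b_m(p)$ and $B_{n+m} \leq B_n B_m$. By Fekete's lemma, $\lambda := \lim_n B_n^{1/n}$ exists, and since ``$f^n$ is of small topological degree on $U$'' amounts to $B_n < d^{n(k-m)}$, the conclusion reduces to $\lambda < d^{k-m}$.

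Passing from the pointwise hypothesis to this uniform bound is the main obstacle. I would use lower semi-continuity of $b_n$ on $\proj^k$: an interior preimage in the open set $U$ persists under perturbation of $p$, so $\{b_n \leq C\}$ is closed, and hence each $F_{N, \ell} := \{p : b_n(p) \leq (d^{k-m} - 1/\ell)^n \text{ for all } n \geq N\}$ is closed. The hypothesis yields $\A \subset \bigcup_{N, \ell} F_{N, \ell}$, so the Baire category theorem applied to the compact metric space $\A$ produces $(N_0, \ell_0)$ with $\A \cap F_{N_0, \ell_0}$ of nonempty interior in $\A$. The remaining and most delicate step is propagating this local uniform control from a subset of $\A$ to all of $\overline{U}$: one combines the elementary inequality $b_n(p) \leq b_{n+r}(f^r(p))$ (from $f^{-n}(p) \subset f^{-(n+r)}(f^r(p))$) with the contraction $f^r(U) \to \A$, but forcing the forward iterates to land in the ``good'' region is likely to require either a strengthening of the Baire argument using forward invariance of the favorable set or additional structural hypotheses.
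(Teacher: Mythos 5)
The paper does not actually supply a proof of this proposition (it is explicitly ``left to the reader''), so there is nothing to compare against; I will assess your argument on its own terms.

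Your proof of the first assertion is correct. The decreasing compacts $\overline{f^{n}(U)}$ do intersect down to $\A$ (since $\overline{f^{n+1}(U)}\subset f^{n}(U)$), so by compactness $f^{N}(U)\subset V$ and $f^{N}(V)\subset U$ for some $N$; the fibre bound $\card\big(f^{-n}(p)\cap V\big)\le d^{Nk}\cdot\card\big(f^{-n}(f^{N}(p))\cap U\big)$ is right because $y\mapsto f^{N}(y)$ sends $f^{-n}(p)\cap V$ into $f^{-n}(f^{N}(p))\cap U$ with fibres of size at most $d^{Nk}$, and $f^{N}(p)\in f(U)$ so the hypothesis applies. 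Taking $n$-th roots kills the constant. Good.

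For the second assertion your reduction is also correct: $B_{n+m}\le B_{n}B_{m}$ via the disjoint decomposition of $f^{-(n+m)}(p)\cap U$ over $q\in f^{-m}(p)\cap U$, Fekete gives $\lambda=\lim B_{n}^{1/n}=\inf B_{n}^{1/n}$, and the statement ``$f^{n}$ is of small topological degree on $U$'' is exactly $B_{n}<d^{n(k-m)}$, so the whole claim is $\lambda<d^{k-m}$. You have correctly identified the real content as a pointwise-to-uniform passage. But the Baire argument you sketch does not close it, and you are right to be uneasy. Lower semicontinuity of $b_{n}$ makes the \emph{good} sets $F_{N,\ell}$ closed, so Baire only yields a set with nonempty interior, and there is no reason a priori that every forward orbit enters that region, let alone in uniformly bounded time, which is what the inequality $b_{n}(p)\le b_{n+r}(f^{r}(p))$ would require. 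The favourable sets are in fact \emph{backward} invariant (if $q\in f^{-1}(p)\cap U$ then $b_{n}(q)\le b_{n+1}(p)$, so $\overline{\lim}\,b_{n}(q)^{1/n}\le\overline{\lim}\,b_{n}(p)^{1/n}$), not forward invariant, so propagating control forward along orbits goes against the grain of the inequalities you have. Moreover, the semicontinuity goes the wrong way for a covering argument: $\{b_{n}\ge C\}$ is open, $\{b_{n}<C\}$ is closed, so one cannot extract a finite subcover of ``good'' sets. In the abstract (without the dynamical preimage structure) the pointwise bound $\overline{\lim}b_{n}(p)^{1/n}<c$ really does not imply $\lim B_{n}^{1/n}<c$, so some genuine use of the holomorphic/preimage-tree structure is needed, and your write-up does not yet supply it. As it stands, the second half of the proposition is not proved; the acknowledged gap is a real one and would require a new idea (not merely ``additional structural hypotheses'' but an argument that uses more of the structure already present).
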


\begin{defi}\label{def petit dt}
We say that $f$ is of \textbf{small topological degree} on an attracting set if $f$ is asymptotically of small topological degree on some trapping region.

Sometimes, we will abbreviate this into ``attracting set of small topological degree".
\end{defi}

\begin{prop}\label{cond ouverte}
The property of being of small topological degree on some attracting set is open in the set of endomorphisms of degree $d$.
\end{prop}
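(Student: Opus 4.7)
The plan is to fix a trapping region $U_0$ and an iterate $n_0\geq 1$ witnessing the small topological degree of $f_0$, and then show that both the trapping region condition and the preimage bound are preserved for $f$ in a neighborhood of $f_0$ in the finite-dimensional space of endomorphisms of $\proj^k$ of algebraic degree $d$.

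Concretely, by Definition~\ref{def petit dt} we may choose $n_0$ and $U_0$ with
$$N_0 \;:=\; \max_{p\in f_0^{n_0}(U_0)}\card\bigl((f_0^{n_0})^{-1}(p)\cap U_0\bigr) \;<\; d^{n_0(k-m)}.$$
Passing to the smaller trapping region $U:=f_0(U_0)\Subset U_0$ only decreases this count, so the same strict inequality still holds. Since $f_0(\overline U)\Subset U$ and the map $f\mapsto f(\overline U)$ is continuous in Hausdorff distance, the same inclusion $f(\overline U)\Subset U$ holds for every $f$ close to $f_0$; hence $U$ is a trapping region for $f$, yielding an attracting set $\A_f=\bigcap_n f^n(U)$. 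The dimension of $\A_f$ is still $k-m$ because this notion depends only on $U$ and not on $f$ (see the remark following Definition~\ref{def dimension}).

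It then remains to show that $\card((f^{n_0})^{-1}(p)\cap U)\leq N_0$ for every $p\in f^{n_0}(U)$ when $f$ is sufficiently close to $f_0$. The plan is to argue by contradiction: a sequence $f_j\to f_0$ and $p_j\in f_j^{n_0}(U)$ admitting $N_0+1$ distinct preimages $y_{j,1},\dots,y_{j,N_0+1}\in U$ would, by compactness of $\overline U$, subconverge to preimages $y_l\in\overline U\subset U_0$ of some $p_\infty\in\overline U$ under $f_0^{n_0}$. If the $y_l$ were pairwise distinct we would immediately contradict the choice of $N_0$.

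The main obstacle is therefore to control the possible coalescences $y_{j,l}\to y_l=y_{l'}$ at critical points of $f_0^{n_0}$. The key input is the classical fact that in a holomorphic family, the number of distinct preimages of a point near $p_\infty$ contained in a small ball around a limit preimage $y$ is bounded by the local multiplicity $\mathrm{mult}(f_0^{n_0},y)$; summing these local degrees over the distinct limits $y_l\in\overline U\subset U_0$ we obtain that the multiplicity-counted total number of $f_0^{n_0}$-preimages of $p_\infty$ in $\overline U\subset U_0$ is at least $N_0+1$. Upper semicontinuity of the multiplicity-counted preimage function on the compact set $\overline U$ then contradicts the bound $N_0$ for $f_0$ (read with multiplicities, as is natural in view of the sum in Lemma~\ref{lemme forward pot}), and the openness follows.
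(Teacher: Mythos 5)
Your argument is correct and follows essentially the same perturbation strategy as the paper: pass to an iterate and a shrunken trapping region, then show that the preimage count cannot jump up for nearby maps. The paper's proof is terser---it introduces an intermediate open set $V$ with $g(V)\Subset f(U)\Subset V\Subset U$ and simply asserts that $g^{-1}(p)$ being close to $f^{-1}(p)$ gives $\card(g^{-1}(p)\cap V)\leq\card(f^{-1}(p)\cap U)$. What your write-up adds, and what the paper leaves implicit, is a careful treatment of what ``close'' must mean: you make explicit that the inequality is only safe if preimages are counted \emph{with multiplicity}, since under perturbation a degenerate preimage can split into several distinct nearby ones (so the \emph{set-theoretic} count is only lower semicontinuous). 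The contradiction/compactness reformulation and the local-degree bookkeeping around coalescing preimages you give is precisely the content needed to justify the paper's one-line assertion, and your observation that $\overline U\Subset U_0$ is what makes the upper semicontinuity of the multiplicity-counted preimage function applicable is the right technical point. So: same route, with the multiplicity caveat correctly identified and resolved.
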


\begin{proof}
Let $f$ be an endomorphism of $\proj ^k$ of small topological degree on some attracting set of dimension $k-m$ and let $U$ be a trapping region. Replacing $f$ by an iterate we may assume that $f$ is of small topological degree on $U$.
Then for each $p\in f(U)$, we have that $\card (f^{-1}(p) \cap U )<d^{k-m}$.
Let $g$ be close to $f$, there exists an open set $V$ such that $g(V)\Subset f(U)\Subset V\Subset U$.
Thus, the attracting set $\cap g^n(V)$ is of dimension $k-m$. For each $p\in g(V)\subset U$, $g^{-1}(p)$ is close to $f^{-1}(p)$ hence, if $g$ is close enough to $f$, we have that $\card (g^{-1}(p) \cap V )\leq \card (f^{-1}(p) \cap U ) <d^{k-m}$.
Therefore, $g$ is of small topological degree on $V$.
\end{proof}

Let us prove that such attracting sets are non algebraic.

\begin{prop}\label{prop non alg}
If a holomorphic endomorphism $f$ of $\proj^k$ is of small topological degree on an attracting set $\A$, then $\A$ is non algebraic.
\end{prop}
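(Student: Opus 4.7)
The plan is to argue by contradiction. Suppose $\A$ is algebraic. By Proposition \ref{prop dimension}, the current-theoretic and algebro-geometric notions of dimension agree, so the maximal dimension of an irreducible component of $\A$ equals $k-m$. Pick such a component $\A_0$. Since $f(\A)=\A$ and $f$ sends irreducible analytic sets to irreducible analytic sets, $f$ permutes the top-dimensional components of $\A$. Passing to an iterate kills this permutation, and by the proposition stated just after Definition \ref{def petit dt} we may moreover arrange that the same iterate, call it $f^n$, is actually of small topological degree on $U$. Thus, after this reduction, we have $f^n(\A_0)=\A_0$ and
\[
 \card\!\bigl((f^n)^{-1}(p)\cap U\bigr)<d^{n(k-m)} \quad \text{for every } p\in f^n(U).
\]

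The heart of the argument is to compute $\deg\bigl(f^n|_{\A_0}\bigr)$. I would use a push-pull computation in cohomology: since $(f^n)^*\omega_{FS}=d^n\omega_{FS}$, the induced endomorphism of $H^{m,m}(\proj^k,\R)$ by $(f^n)_*$ is multiplication by $d^{n(k-m)}$. On the other hand, because $\A_0$ is irreducible and $f^n(\A_0)=\A_0$, at the level of currents we have $(f^n)_*[\A_0]=\deg(f^n|_{\A_0})\,[\A_0]$. Comparing cohomology classes — both sides being nonzero multiples of $\deg(\A_0)\cdot[\omega_{FS}]^m$ — yields $\deg\bigl(f^n|_{\A_0}\bigr)=d^{n(k-m)}$.

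To close the contradiction, I pick a generic $p\in\A_0$ avoiding the branch locus of $f^n|_{\A_0}$: for such a $p$, the preimages on $\A_0$ are $d^{n(k-m)}$ distinct points. Since $\A_0\subset\A\subset f(U)\subset U$, these preimages all lie in $U$, giving
\[
 \card\!\bigl((f^n)^{-1}(p)\cap U\bigr)\ \geq\ d^{n(k-m)},
\]
directly contradicting the small-topological-degree bound displayed above.

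The main obstacle, and really the only subtle step, is the cohomological identity $\deg(f^n|_{\A_0})=d^{n(k-m)}$. It depends on the fact that $\A_0$ is irreducible (so that $(f^n)_*[\A_0]$ is proportional to $[\A_0]$ rather than a combination of several components) and that $f^n(\A_0)=\A_0$ as a set, both of which are secured by our preparatory reduction. Once this identity is in place, the bookkeeping involving multiplicities and the inclusion $\A_0\subset U$ is immediate.
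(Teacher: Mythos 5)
Your proof is correct and follows essentially the same route as the paper: pass to an iterate that fixes the top-dimensional components and is still of small topological degree on $U$, show that the restricted topological degree is $d^{n(k-m)}$, and contradict the small-degree bound. The cohomological push--pull computation you supply is precisely the content of Lemma \ref{intersection transversale} (Lemma 1.48 of \cite{DiSi}), which the paper simply cites.
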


The proof relies on the following classical lemma.

\begin{lemme}\label{intersection transversale}
Let $f$ be an endomorphism of $\proj ^k$ and $M\subset \proj^k$ be an invariant algebraic set ($f(M)=M$) of pure dimension $k-m$, $1\leq m\leq k-1$. Then there exists $n\geq 1$ such that $f^n$ fixed all irreducible components of $M$ and the topological degree of $f^n_{|M}$ is $d^{n(k-m)}$.
\end{lemme}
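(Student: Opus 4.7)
The plan is to split the statement into two independent assertions: (i) some iterate of $f$ fixes every irreducible component of $M$, and (ii) on each fixed irreducible component, the topological degree of that iterate is $d^{n(k-m)}$.

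For (i), I would first note that since $M$ is algebraic of pure dimension $k-m$, it has only finitely many irreducible components $M_1,\dots,M_r$. Each $f(M_i)$ is irreducible (continuous image of irreducible), closed (since $f$ is a finite, hence closed, morphism), and of dimension at most $k-m$. The equality $f(M)=M$ forces $\bigcup_i f(M_i)=\bigcup_j M_j$, and since the $M_j$ are the maximal-dimensional irreducible pieces, each $f(M_i)$ must coincide with some $M_{\sigma(i)}$. Surjectivity of $f$ on $M$ makes $\sigma$ a surjection of the finite set $\{1,\dots,r\}$ to itself, hence a permutation. Taking $n$ to be the order of $\sigma$ gives $f^n(M_i)=M_i$ for every $i$.

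For (ii), I would argue in cohomology on $\proj^k$. Let $h\in H^2(\proj^k,\Z)$ be the hyperplane class; the cohomology ring is $\Z[h]/(h^{k+1})$, and since $f$ has algebraic degree $d$, $f^*h=dh$, hence $f^*h^j=d^j h^j$. Fix an irreducible component $M'$ with $f^n(M')=M'$ and write $[M']=\deg(M')\,h^m$. Then $f^*[M']=d^m[M']$. Because the topological degree of $f:\proj^k\to\proj^k$ equals $d^k$, the projection formula gives $f_*f^*=d^k\cdot\mathrm{id}$ on cohomology, so $f_*[M']=d^{k-m}[M']$. Iterating, $(f^n)_*[M']=d^{n(k-m)}[M']$. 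On the other hand, $f^n_{|M'}:M'\to M'$ is a finite surjective morphism between irreducible varieties of the same dimension, so $(f^n)_*[M']=\deg(f^n_{|M'})\,[M']$. Comparing yields $\deg(f^n_{|M'})=d^{n(k-m)}$. Doing this on each component and, if necessary, passing to a common multiple of the $n$'s concludes the proof.

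The only delicate point is justifying the identity $f_*f^*=d^k\cdot\mathrm{id}$ on the cohomology of $\proj^k$; this follows from the projection formula together with the fact that $f$ is a finite surjective morphism of topological degree $d^k$. Everything else is bookkeeping on irreducible components and straightforward intersection-theoretic computation in $H^{2m}(\proj^k)\cong\Z$.
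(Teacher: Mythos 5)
The paper itself does not prove this lemma; it simply cites Dinh--Sibony, Lemma 1.48. Your argument is a correct, self-contained proof and uses exactly the standard mechanism one would expect: the finitely many top-dimensional irreducible components are permuted by $f$ (since $f$ is finite, hence closed and dimension-preserving), so some iterate fixes them all; and the degree identity comes from $f^*h=dh$, the projection formula $f_*f^*=d^k\,\mathrm{id}$, and the cycle-theoretic identification $(f^n)_*[M']=\deg\bigl(f^n|_{M'}\bigr)[M']$. Two small remarks. First, to conclude that the topological degree of $f^n|_M$ (the number of preimages of a generic $p\in M$) equals $d^{n(k-m)}$, you should also observe that a generic $p\in M_i$ has no preimages in any other component $M_j$: the set $M_j\cap (f^n)^{-1}(M_i)$ is a proper closed subvariety of $M_j$ (otherwise $M_j=f^n(M_j)\subset M_i$, impossible for distinct components of the same pure dimension), so its image under $f^n$ has dimension strictly less than $k-m$ and is avoided by a generic point of $M_i$. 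Second, the closing phrase about ``passing to a common multiple of the $n$'s'' is superfluous: the $n$ produced in your step (i) already fixes every component, and the degree computation in step (ii) then applies verbatim to each fixed component for that single $n$.
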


\begin{proof}
See \cite[Lemma 1.48]{DiSi}.
\end{proof}

\begin{proof}[Proof of Proposition \ref{prop non alg}]
Let $k-m$ be the dimension of $\A$ and let $U$ be a trapping region. Up to replacing $f$ by an iterate, we may  assume that $f$ is of small topological degree on $U$.
Assume that $\A$ is algebraic. By Proposition \ref{prop dimension}, $\A$ is of dimension $k-m$. Let  $M$ be the (finite) union of irreducible components of pure dimension $k-m$ of $\A$. 
Then $f(M)=M$, because $f(\A)=\A$ and $f$ does not contract any algebraic subvariety on an algebraic subvariety  of lower dimension. 
Hence, by Lemma \ref{intersection transversale}, $f$ cannot be of small topological degree on $M$.
\end{proof}

%%%%%%%%
% séparation dim qcq dim 2

We now proceed with the non pluripolarity (in codimension 1).
The corresponding result in higher codimension fails (see below Theorem \ref{th att cont in hyperplan}).

\begin{theoreme}\label{th potentiel bourne}
Let $f$ be a holomorphic endomorphism of $\proj^k$ and let $\A$ be an attracting set of dimension $k-1$.
Assume $f$ is of small topological degree on some trapping region $U$ of $\A$. Let $T$ be a closed positive current of bidegree (1,1) of mass 1, with support in $U$, admitting a bounded quasi-potential. Then each cluster value of $\left( \frac{1}{d^{n(k-1)}} (f^n)_* T \right)$ has a bounded quasi-potential. In particular, $\A$ is non pluripolar.

Moreover, under the assumptions of Theorem \ref{th dinh}, the attracting current $\tau$ has a bounded quasi-potential.
\end{theoreme}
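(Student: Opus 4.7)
The plan is to reduce the statement to a uniform $L^\infty$-bound on the sequence of normalized quasi-potentials. Write $T = \omega_{FS} + dd^c u_0$ with $u_0 \in L^\infty(\proj^k)$ and $\int u_0\, \omega_{FS}^k = 0$, and let $u_n$ denote the analogous normalized quasi-potential of $T_n := \Lr^n T$. Weak-$*$ compactness of the unit ball in $L^\infty(\proj^k)$ combined with Lemma \ref{lemme CV pot} yields a bounded quasi-potential for every cluster value of $(T_n)$ as soon as $\sup_n \|u_n\|_{L^\infty} < +\infty$.

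First I would write the recursion provided by Lemma \ref{lemme forward pot}:
\[
u_{n+1} = \Lr u_n + g - c_{n+1},
\]
where $g = u_{\Lr\omega_{FS},\omega_{FS}}$ is bounded and continuous and $c_{n+1}\in \R$ is the normalizing constant. Since $\supp T\subset \bar U$, $u_0$ satisfies $dd^c u_0 = -\omega_{FS}$ on $\proj^k\setminus\bar U$, hence is smooth there.

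I would then decompose $u_0 = \chi_0 + w_0$: choose a small open neighborhood $V$ of $\bar U$ and a smooth function $\chi_0$ on $\proj^k$ coinciding with $u_0$ outside $V$ (this is possible because $u_0$ is smooth off $\bar U$), and set $w_0 := u_0 - \chi_0$, a bounded function vanishing outside $V$. Iterating via $w_{n+1} := \Lr w_n$ and $\chi_{n+1} := \Lr\chi_n + g - c_{n+1}$, I obtain $u_n = \chi_n + w_n$ for every $n$.

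The small topological degree then delivers a geometric contraction on the ``$w$'' side. After replacing $f$ by a suitable iterate and choosing $V$ close enough to $\bar U$, upper semi-continuity of the preimage count on closed sets produces an integer $D < d^{k-1}$ such that $\card(f^{-1}(p) \cap \bar V) \leq D$ for every $p$. Consequently, for any $w$ supported in $\bar V$,
\[
\|\Lr w\|_{L^\infty} \leq \frac{D}{d^{k-1}} \|w\|_{L^\infty},
\]
and $\supp \Lr w \subset f(\bar V)$. Iteration gives $\|w_n\|_{L^\infty} \leq (D/d^{k-1})^n \|w_0\|_{L^\infty} \to 0$, while $\supp w_n \subset f^n(\bar V)$ shrinks onto $\bar\A$.

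The remaining, and most delicate, step is the uniform control of the smooth part $\chi_n$. Note that $\chi_n$ is the normalized quasi-potential of the smooth closed $(1,1)$-form $\Lr^n R$, where $R := \omega_{FS} + dd^c \chi_0$ is of mass $1$ and vanishes outside $V$, so $\supp R\subset \bar V$. My plan here is to write $R = R_+ - R_-$ as a difference of smooth positive closed $(1,1)$-forms supported in $\bar V$ and to apply a quantitative refinement of Theorem \ref{th dinh} --- extracted from Dinh's proof by tracking the continuity modulus of the potentials --- to the rescaled pieces $R_\pm/\mathrm{mass}(R_\pm)$. This would yield uniformly bounded continuous quasi-potentials for $\Lr^n R_\pm$, and hence for $\chi_n$.

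Combining the geometric decay of $w_n$ with the uniform bound on $\chi_n$ gives the desired uniform bound on $u_n$, establishing the first part of the theorem and the non-pluripolarity of $\A$. For the ``Moreover'' assertion, I would apply the first part to a smooth positive $(1,1)$-form $S$ of mass $1$ with compact support in $U$ (its quasi-potential being smooth and bounded); under the hypotheses of Theorem \ref{th dinh} one has $\Lr^n S \to \tau$, so $\tau$ is the unique cluster value and inherits a bounded quasi-potential from the first part.

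The hard part will be the uniform boundedness of the $\chi_n$: the operator $\Lr$ expands $L^\infty$ norms by a factor of order $d$ on generic smooth functions, so the boundedness of $\chi_0$ alone does not give a uniform bound under iteration. It must be derived from an effective refinement of Theorem \ref{th dinh}, which uses crucially the star-shaped geometric hypothesis \eqref{cond dinh} when it is available.
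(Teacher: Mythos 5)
Your decomposition $u_n=\chi_n+w_n$ correctly isolates a contracting piece: since $w_n$ is supported in a compact set where the preimage count is at most $D<d^{k-1}$, the bound $\|\Lr w\|_\infty\leq \frac{D}{d^{k-1}}\|w\|_\infty$ is right and gives geometric decay. But the entire difficulty of the theorem has been shifted onto $\chi_n$, and the tool you propose for it cannot exist. You want uniformly bounded quasi-potentials for $\Lr^n R_\pm$, where $R_\pm$ are positive closed $(1,1)$-forms supported in the trapping region, to come from a ``quantitative refinement of Theorem \ref{th dinh}'' that tracks moduli of continuity --- i.e.\ from Dinh's argument alone, \emph{without} the small topological degree hypothesis (which in your scheme is consumed entirely by the $w_n$ part). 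Such a refinement is impossible: for $f=[P:Q:t^d]$ the hypotheses of Theorem \ref{th dinh} hold, $\Lr^n S\to[L_\infty]$ for any smooth positive $S$ in $U$, and $[L_\infty]$ has an unbounded (logarithmic) quasi-potential; so the potentials of $\Lr^n R_\pm$ cannot stay uniformly bounded in general. This is exactly the content of Dinh's question, which the theorem is meant to answer. (Two further, lesser issues: writing the signed form $R$ as $R_+-R_-$ with \emph{both} pieces positive, closed and still supported in $\bar V$ is not justified; and taking $V\supset\bar U$ puts the support of $w_0$ outside the region where the preimage count is hypothesized to be small --- you should instead first replace $T$ by $\Lr T$ and take $\overline{f(U)}\subset V\Subset U$.)

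The structural reason your scheme stalls is that you measure potentials against $\omega_{FS}$, so the operator $f_*$ must sum over all $d^k$ preimages and expands by a factor $d$; the ``bulk'' part $\chi_n$ then carries this expansion with no mechanism to cancel it. The paper's proof avoids this by iterating the dsh function $u_{\Lr^nT,\,T}$, i.e.\ the potential of the \emph{difference} of two currents both supported in a set $U'$ with $f(U)\Subset U'\Subset U$. That difference of currents vanishes outside $U'$, so $u_{\Lr^nT,T}$ is pluriharmonic there; consequently, in $\frac{1}{d^{k-1}}f_*u_{\Lr^nT,T}$ the preimages lying outside $U$ contribute only a pluriharmonic term, which a compactness/Harnack argument bounds uniformly, while the at most $d_t$ preimages inside $U$ give the contraction $\frac{d_t}{d^{k-1}}\|u_{\Lr^nT,T}\|_\infty$ (Lemma \ref{lemme potenitel bourne}). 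The resulting recursion $\|u_{n+1}\|_\infty\leq\frac{d_t}{d^{k-1}}\|u_n\|_\infty+C$ closes the argument with no analogue of your $\chi_n$ to control. I would redo your proof along these lines: replace the reference form $\omega_{FS}$ by $T$ itself.
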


The proof of this theorem is essentially contained in the following lemma.
We recall that $\Lr=\frac{1}{d^{k-1}} f_*$. We denote by $d_t$ the maximum number of preimages in $U$ of a point in $f(U)$ under $f$.

\begin{lemme}\label{lemme potenitel bourne}
Let $S,R$ be positive closed currents of bidegree (1,1) with mass 1 and with bounded quasi-potentials.
For every $U'$ such that $f(U)\Subset U'\Subset U$, there exists a constant $c>0$ such that if  $\supp(S)\subset U'$ and $\supp(R)\subset f(U')\subset U'$ then we have that $$||u_{\Lr S,R}||_\infty \leq \frac{d_t}{d^{k-1}}||u_{S,R}||_\infty +||u_{\Lr R,R}||_\infty +c  .$$
\end{lemme}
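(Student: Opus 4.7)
The plan is to reduce the estimate via the telescope identity $u_{\Lr S, R} = u_{\Lr S, \Lr R} + u_{\Lr R, R}$, valid because both sides are dsh with zero integral against $\omega_{FS}^k$. By the triangle inequality it then suffices to prove $\|u_{\Lr S, \Lr R}\|_\infty \leq \frac{d_t}{d^{k-1}} \|u_{S, R}\|_\infty + c$.

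To this end, Lemma \ref{lemme forward pot} gives $u_{\Lr S, \Lr R} = V - \bar V$, where $V(x) = \frac{1}{d^{k-1}} \sum_{f(y) = x} u_{S, R}(y)$ and $\bar V = \int V \, \omega_{FS}^k$. I bound $|V(x)|$ pointwise by splitting the sum between preimages $y \in U$ and $y \notin U$. By the small topological degree hypothesis there are at most $d_t$ preimages in $U$ (counted with multiplicity), each contributing at most $\|u_{S, R}\|_\infty$ in absolute value; the remaining preimages, at most $d^k$ in number and lying in $\proj^k \setminus U$, each contribute at most $c'$, where $c' = c'(U, U')$ is a constant bounding $|u_{S, R}|$ on the compact set $\proj^k \setminus U$ uniformly in $S, R$. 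This uniform constant exists because $u_{S, R}$ is pluriharmonic on $\proj^k \setminus \overline{U'}$ (the current $S - R$ vanishes there); by Theorem A.40 of \cite{DiSi} the map $S \mapsto u_{S, \omega_{FS}}$ is continuous on the compact space of positive closed currents of mass $1$ supported in $\overline{U'}$, and elliptic regularity for solutions of $\dd^c u = -\omega_{FS}$ upgrades this $L^1$-continuity to a uniform $L^\infty$-bound on the interior compact set $\proj^k \setminus U$. Hence $\|V\|_\infty \leq \frac{d_t}{d^{k-1}} \|u_{S, R}\|_\infty + dc'$.

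The key remaining step is to show $|\bar V|$ is bounded by a constant \emph{independent} of $\|u_{S, R}\|_\infty$: the naive estimate $|\bar V| \leq \|V\|_\infty$ would inject a spurious factor of $2$ that would defeat the iterative application of the lemma used to prove Theorem \ref{th potentiel bourne}. Using the normalisation $\int u_{S, R} \, \omega_{FS}^k = 0$, I rewrite $\bar V = \frac{1}{d^{k-1}} \int u_{S, R} (f^* \omega_{FS}^k - d^k \omega_{FS}^k)$. The closed $(k, k)$-form $f^* \omega_{FS}^k - d^k \omega_{FS}^k$ has zero integral, so by the $\dd^c$-lemma on $\proj^k$ it equals $\dd^c \Theta$ for some smooth real $(k-1, k-1)$-form $\Theta$. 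Integration by parts on the closed manifold $\proj^k$ then yields $\bar V = \frac{1}{d^{k-1}} \int (S - R) \wedge \Theta$, which is bounded in absolute value by a constant $C = C(f)$ since $S$ and $R$ have mass $1$ and $\Theta$ is fixed.

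Combining these estimates, $\|u_{\Lr S, \Lr R}\|_\infty \leq \|V\|_\infty + |\bar V| \leq \frac{d_t}{d^{k-1}} \|u_{S, R}\|_\infty + dc' + C$, and setting $c := dc' + C$ closes the argument. The hardest part is precisely the tight control of $\bar V$ via the $\dd^c$-lemma; without this sharper bound, the induction from iterating this lemma would only converge under the strictly stronger hypothesis $d_t < d^{k-1}/2$ rather than under the small topological degree hypothesis $d_t < d^{k-1}$.
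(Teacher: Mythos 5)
Your proof is correct, and it takes a genuinely different route from the paper. You and the paper both start from the same implicit decomposition $\Lr S - R = (\Lr S - \Lr R) + (\Lr R - R)$, but afterwards the strategies diverge.

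The paper localizes: near each $p \in f(U)$, it chooses a small $V_p$ and defines a truncated push-forward $v_{S,R,p}(x) = \tfrac{1}{d^{k-1}} \sum_{y \in U_{1,p}, f(y)=x} u_{S,R}(y)$ that only sums over preimages lying in (a suitable part of) $U$. It uses the pluriharmonicity of $u_{S,R}$ outside $U'$ to prove that this truncation still satisfies $\dd^c v_{S,R,p} = \Lr(S-R)$ on $V_p$, bounds $v_{S,R,p}$ by $\tfrac{d_t}{d^{k-1}}\|u_{S,R}\|_\infty$, and then controls the pluriharmonic difference $u_{\Lr S,R} - (v_{S,R,p} + u_{\Lr R,R})$ by a compactness-and-contradiction argument using Lemma \ref{lemme CV pot} (continuity of $(S,R)\mapsto u_{S,R}$ in $L^1$) plus local uniform convergence of pluriharmonic functions. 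A finite cover of $f(U)$ by such $V_p$ and a separate argument on $\proj^k \setminus f(U)$ then finish the estimate.

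You instead work globally: you keep the full push-forward $V$ and split the fiber sum into preimages in $U$ (at most $d_t$, each worth at most $\|u_{S,R}\|_\infty$) versus preimages in $\proj^k\setminus U$, which you bound uniformly using pluriharmonicity of $u_{S,R}$ on $\proj^k\setminus \overline{U'}$ together with the $L^1$-compactness of the family and interior elliptic estimates. The delicate point is then the normalizing constant $\bar V$, which the naive estimate $|\bar V| \le \|V\|_\infty$ would double the leading coefficient, ruining the contraction when $d_t$ is close to $d^{k-1}$; your $\dd^c$-lemma trick rewriting $\bar V = \tfrac{1}{d^{k-1}}\int (S-R)\wedge\Theta$ fixes this cleanly and is, to my taste, more transparent than the paper's compactness argument, which has to absorb the same normalization into the uniform bound $c_{S,R}$. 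Both approaches exploit the same geometric input (pluriharmonicity of $u_{S,R}$ on $\proj^k\setminus U'$), but use it at different places: the paper to show the truncated potential $v_{S,R,p}$ has the correct $\dd^c$, you to obtain a uniform bound on $u_{S,R}$ over $\proj^k\setminus U$.
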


\begin{proof}
As expected, the proof just consists in making precise the idea that since $u_{S,R}$ is pluri-harmonic on $\proj^k\setminus U'$,
we do not need to focus on preimages belonging to $\proj^k\setminus U'$. 
Assume that we can separate the preimages inside $U$ and outside $U$ such that
  the map $v$ defined by :
$$v(x)=\frac{1}{d^{k-1}}\underset{y\in U}{\underset{f(y)=x}{\sum}} u_{S,R} (y),$$
is dsh and satisfies $\dd^c v= \dd^c(\frac{1}{d^{k-1}} f_* u_{S,R})$.
Then we have $||v||_\infty\leq \frac{d_t}{d^{k-1}}||u_{S,R}||_\infty$ and we would finish the proof simply by showing that $u_{\Lr S,R}-v-u_{\Lr R,R}$ is bounded.
However $f:U\rightarrow f(U)$ is not proper, so we actually need to work locally near every point to make this idea work.

Let $V$ be a small open sets such that $f^{-1}(V)$ may be written as a disjoint union $f^{-1}(V) = U_1\cup U_2$ such that $f(U_1)=V$,
 $U_1\subset U$, $U_2\cap U'=\emptyset$ and
 the number of preimages (with multiplicity) of a point of $V$ in $U_{1}$ is fixed (and less than $d_t$).

In this way, we have $$\frac{1}{d^{k-1}}\underset{{f(y)=x}}{\sum} u_{S,R} (y)= v_1(x)+v_2(x)$$ and $\dd^c v_1=\dd^c(\frac{1}{d^{k-1}} f_* u_{S,R})$, where
\begin{eqnarray}
\nonumber
v_1(x)=\frac{1}{d^{k-1}}\underset{y\in U_1}{\underset{f(y)=x}{\sum}} u_{S,R} (y) \text{ and }
v_2(x)=\frac{1}{d^{k-1}}\underset{y\in U_2}{\underset{f(y)=x}{\sum}} u_{S,R} (y).
\end{eqnarray}

As $f$ is finite, we may choose for any $p\in f(U)$ a neighbourhood $V_p$ small enough such that  each connected component of $f^{-1}(V_p)$ contains a unique preimage of $p$. (Of course $V_p$ cannot be uniform with respect to $p$.)
Denote by $C_q$ the  connected component of $f^{-1}(V_p)$ which contains $q\in f^{-1}(p)$.
%We have that $u_{\Lr S,R}=\frac{1}{d^{k-1}} f_* u_{S,R}+u_{\Lr R,R} $ and $f_*u_{S,R}(x)=\underset{f(y)=x}{\sum} u_{S,R} (y)$.
 We denote by $U_{1,p}$ the set $U_{1,p}=\cup _{q\in f^{-1}(p)\cap U} C_q$ and by $U_{2,p}$ the set$U_{2,p}= f^{-1}(V_p)\setminus U_{1,p}$. 
We can reduce $V_p$ such that $U_{1,p}\subset U$, $f(U_{1,p})=V_p$ and $U_{2,p} \cap U' = \emptyset$, 
thus the number of preimages (with multiplicity) of a point of $V_p$ which lies in $U_{1,p}$ is fixed.

Let $x$ belonging to $V_p$. By Lemma \ref{lemme forward pot} $ x\mapsto \widetilde{u}_{S,R}(x)= \frac{1}{d^{k-1}} \sum_{f(y)=x} u_{S,R} (y)$ is a dsh function such that $\Lr (S-R) =\dd^c \widetilde{u}_{S,R}$.
We define,  $v_{S,R,p} $ by : for all $x\in V_p$
 $$v_{S,R,p} (x)=  \frac{1}{d^{k-1}}\underset{y\in U_{1,p}}{\underset{f(y)=x}{\sum}} u_{S,R} (y) .$$

As $\supp(S-R)\subset U'$, $u_{S,R}$ is pluri-harmonic outside $U'$ 
thus $\dd^c v_{S,R,p} =\dd^c  \widetilde{u}_{S,R}=\Lr (S-R)$ in $V_p$ and 
$||v_{S,R,p}||_\infty \leq \frac{d_t}{d^{k-1}} ||u_{S,R}||_\infty$.
As $\dd^c u_{\Lr S,R}=\Lr S - R=\Lr (S-R) + (\Lr R-R) =\dd^c (v_{S,R,p}+u_{\Lr R,R})$ on $V_p$, we just have to prove that  there exists $c$ independent of $S,R$ and $V_p$ such that
on $V_p$ $||u_{\Lr S,R} - (v_{S,R,p}+u_{\Lr R,R})||_\infty \leq c.$

We know that $u_{\Lr S,R} - (v_{S,R,p}+u_{\Lr R,R})$ is pluri-harmonic on $V_p$, hence, reducing a little bit $V_p$ if necessary, by the maximum principal there exists $c_{S,R}$ such that 
$||u_{\Lr S,R} - (v_{S,R,p}+u_{\Lr R,R})||_\infty \leq c_{S,R}$ on $V_p$.
Let us first show that $c_{S,R}$ is uniform with $R$ and $S$.
 Assume by contradiction that there exists a sequence $(S_i,R_i)$ such that $||u_{\Lr S_i,R_i} - (v_{S_i,R_i}+u_{\Lr R_i,R_i})||_\infty\rightarrow \infty$.
We can extract a converging subsequence, still denoted by $(S_i,R_i)$. Let $(\tilde{S},\tilde{R})$ be its limit.
By Lemma \ref{lemme CV pot}, the sequence of pluri-harmonic functions
$\left( u_{\Lr S_i,R_i} - (v_{S_i,R_i}+u_{\Lr R_i,R_i})\right)$ converges, in sense of distributions, toward 
$u_{\Lr \tilde{S},\tilde{R}} - (v_{\tilde{S},\tilde{R}}+u_{\Lr \tilde{R},\tilde{R}})$, which is bounded, therefore the convergence is uniform (see corollary 3.1.4 in \cite{H}).
Thus the sequence $(||u_{\Lr S_i,R_i} - (v_{S_i,R_i}+u_{\Lr R_i,R_i})||_\infty )$ is bounded, which contradicts the hypothesis.
Therefore, there exists $c_1$ independent of $(S,R)$ such that  $||u_{\Lr S,R} - (v_{S,R}+u_{\Lr R,R})||_\infty \leq c_1$ hence
$$||u_{\Lr S,R}||_\infty \leq \frac{d_t}{d^{k-1}} ||u_{S,R}||_\infty + ||u_{\Lr R,R}||_\infty +c_1 \text{ on } V_p .$$

Up to shrinking $U$, we may assume that $f(U)$ is covered by a finite number of such neighbourhoods $V_p$.
We infer that there exists $c_2$ such that $||u_{\Lr S,R}||_\infty \leq \frac{d_t}{d^{k-1}} ||u_{S,R}||_\infty + ||u_{\Lr R,R}||_\infty +c_2$ on $f(U)$.
Finally, as $u_{\Lr S,R}$ is pluri-harmonic outside $f(U')$ and $f(U')\Subset f(U)$,  $u_{\Lr S,R}$ is bounded by a constant $c_{U}$ on $\proj^k\setminus f(U)$.
Thus, we obtain that
 $$||u_{\Lr S,R}||_\infty \leq \frac{d_t}{d^{k-1}} ||u_{S,R}||_\infty + ||u_{\Lr R,R}||_\infty + c_2 +c_{U}\text{ on } \proj^k.$$
For similar reasons as in the case of $c_{S,R}$, we may assume that $c_{U}$ is independent of $S,R$. 
 Therefore, the proof of the lemma is complete.
\end{proof}

\begin{proof}[Proof of Theorem \ref{th potentiel bourne}]
Up to replacing $T$ by $\Lr T$, we assume $\supp(T)\subset f(U')\subset U'$.
Denote by $u_n$ the dsh function such that $\Lr^n T-T=\dd ^c u_n$ and $\int u_n \, \omega^k_{FS}=0$, i.e. $u_n=u_{\Lr^n T,T}$. Then $u_1$ is bounded.
By Lemma \ref{lemme potenitel bourne}, with $S=\Lr^n T$ and $R=T$, we have
$$ ||u_{n+1}||_\infty \leq \dfrac{d_t}{d^{k-1}} ||u_n||_\infty +||u_1||_\infty +c \text{ on }U.$$
It is classical that a sequence $(x_n)$ satisfying $0\leq x_{n+1}\leq \alpha x_n +c$ with $0< \alpha <1$ is bounded.
Therefore, each cluster value of $(\Lr ^n T)$ has a bounded quasi-potential. If in addition, $f$ satisfies \eqref{cond dinh} then $(\Lr ^n T)$
converges toward $\tau$, thus $\tau$ has also a bounded quasi-potential.
\end{proof}

\begin{rmq}
I do not know how to prove the continuity of the quasi-potential. A reason for this is that the proof does not yield the convergence of the sequence of potentials. Note that the same difficulty appears in Dinh's Theorem \ref{th dinh}.
\end{rmq}

\section{A class of attracting sets of small topological degree in $\proj^2$}\label{section exemples}

Let $\F_d$ be the set of triples $(P,Q,R)$ of homogeneous polynomials of degree $d\geq 2$ in $\C^2$, such that $(0,0)$ is the single common zero of $P,Q$.
Clearly, $\F_d$ is a quasi-projective variety.
In this section, we consider a particular class of endomorphisms of $\proj ^2$ given by the formula
\begin{equation}\label{def f}
f:[z:w:t]\rightarrow  [P(z,w):Q(z,w):t^d + \varepsilon R(z,w)]
\end{equation}
where $(P,Q,R) \in \F_d$.
Attractors for mappings of this form were studied by J.E. Forn\ae ss and N. Sibony  in \cite{FSdyn}, as well as F. Rong  \cite{R}.

These endomorphisms preserve the pencil of lines passing through $[0:0:1]$.
Such a line is determined by a point of the form $[z:w:0]$, thus we identify the line passing through $[0:0:1]$ and $[z:w:0]$ 
with the point of the line at infinity $[z:w:0]$.
The dynamics on this pencil of lines is that of $f_{\infty}:[z:w]\rightarrow [P(z,w):Q(z,w)]$ on $\proj ^1$.

\begin{theoreme}\label{theoreme}
There exists a Zariski open set $\Omega \subset \F_d$ such that if  $(P,Q,R)\in \Omega$ then for  small enough $\varepsilon$, $0<|\varepsilon|<\varepsilon (P,Q,R) $, the endomorphism $f$ of $\proj ^2$ defined by 
$$f:[z:w:t]\mapsto  [P(z,w):Q(z,w):t^d + \varepsilon R(z,w)]$$
admits an attracting set $\A$ of small topological degree. In particular, $\A$ is non pluripolar. Moreover, $f$ satisfies the condition of Theorem \ref{th dinh} and the attracting current $\tau$ has a bounded quasi-potential.
\end{theoreme}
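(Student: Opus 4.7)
The plan is to use the skew-product structure $\pi\circ f=f_\infty\circ\pi$, where $\pi\colon\proj^2\setminus\{[0{:}0{:}1]\}\to\proj^1$ sends $[z{:}w{:}t]$ to $[z{:}w]$ and $f_\infty=[P{:}Q]$ is the induced degree-$d$ endomorphism of $\proj^1$. In a suitable affine coordinate $t$ on each fibre $L_\xi$ (with $t=0$ at the point $[a{:}b{:}0]\in J:=\{t=0\}$ above $\xi=[a{:}b]$), $f$ acts on fibres as $t\mapsto t^d+\varepsilon R(\xi)$. For $|t|$ of order $|\varepsilon|$, the $t$-derivative $d\,t^{d-1}$ is of order $|\varepsilon|^{d-1}$, so fibres are strongly contracted by $f$; this vertical super-contraction is the key mechanism producing a codimension-one attracting set.

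I would first define $\Omega$ by a short list of algebraic non-degeneracy conditions on $(P,Q,R)$: the $d+1$ fixed points of $f_\infty$ are simple and distinct (non-vanishing of the discriminant of $Pw-Qz$), $R$ does not vanish on any of them (resultant non-vanishing), and an additional algebraic condition on $(P,Q)$ ensures the existence of a fixed point $\xi_0$ of $f_\infty$ whose local picture forbids that a small neighbourhood $V$ contains all preimages of its points; this excludes degenerate cases like $(P,Q)=(z^d,w^d)$ in the introduction's bad example. Given $\xi_0$ and $V$, for $\varepsilon$ small I would construct an $f$-invariant holomorphic section $t^\ast\colon V\to\C$ satisfying $t^\ast(f_\infty(\xi))=t^\ast(\xi)^d+\varepsilon R(\xi)$, by a Banach fixed-point argument on the space of bounded holomorphic functions on $V$, exploiting the fibre super-contraction. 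The trapping region is the thin tube
\[
U:=\bigl\{[z{:}w{:}t]\colon[z{:}w]\in V,\ |t-t^\ast([z{:}w])|<r\bigr\},
\]
with $r$ small.

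Next I would verify three things: (a) $f(U)\Subset U$, from $f_\infty(V)\Subset V$ together with the fibre super-contraction; (b) Dinh's star-shapedness condition~\eqref{cond dinh} with $I=[0{:}0{:}1]$ and $J=\{t=0\}$, which holds because each fibre $U\cap L_\xi$ is either empty or a Euclidean disk centred at $t^\ast(\xi)=O(\varepsilon)$ which, for $\varepsilon$ small, contains the corresponding origin of the affine chart on $L_\xi\setminus\{[0{:}0{:}1]\}$ and so is star-shaped at the point $[a{:}b{:}0]\in J$; (c) small topological degree on $U$ by preimage counting: a preimage of $p\in f(U)$ in $U$ must project to $V\cap f_\infty^{-1}(\pi(p))$, contributing at most $d_V:=\deg(f_\infty|_V)$ preimage lines, and on each such line only one of the $d$ roots of the fibrewise polynomial equation lies in the narrow tube (the roots being separated by a scale of order $|\varepsilon|^{1/d}$, while $r\ll|\varepsilon|^{1/d}$). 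Hence $f$ has at most $d_V$ preimages in $U$. Since the construction is designed so that $d_V<d$ (at worst after replacing $f$ by an iterate, which by Definition~\ref{def petit dt} is enough for the asymptotic notion), one obtains small topological degree. Theorem~\ref{th dinh} then produces the attracting current $\tau$ of bidegree $(1,1)$, and Theorem~\ref{th potentiel bourne} yields bounded quasi-potential.

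The main obstacle I foresee is the third Zariski-open condition, since the natural dynamical requirement ``$f_\infty$ admits an attracting periodic orbit around which a suitable $V$ sits'' is an analytic condition ($|\lambda|<1$) rather than an algebraic one. I expect this is bypassed either by working near a super-attracting fixed point of local degree strictly less than $d$ (a polynomial condition on $(P,Q)$, non-trivial when $d\geq 3$) or by passing to iterates and using the multiplier formula $\sum_i 1/(1-\lambda_i)=1$ to extract algebraically a periodic point doing the job. Non-emptiness of $\Omega$ is then verified by exhibiting one explicit example.
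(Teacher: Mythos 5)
Your central construction fails, and the reason is worth spelling out. To have $f(U)\Subset U$ for $U=\{[z:w:t]:\ [z:w]\in V,\ |t-t^\ast([z:w])|<r\}$ you need $f_\infty(V)\Subset V$, so $V$ is a trapping disk of an attracting (or super-attracting) fixed point $\xi_0$ of $f_\infty$ and $\bigcap_n f_\infty^n(V)=\{\xi_0\}$. Since $\pi\circ f=f_\infty\circ\pi$, the attracting set $\A=\bigcap_n f^n(U)$ satisfies $\pi(\A)=\{\xi_0\}$: it sits inside the single fibre $l_{\xi_0}$, where $t\mapsto t^d+\varepsilon R(\xi_0)$ contracts the small disk about $t^\ast(\xi_0)$ to its attracting fixed point. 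You obtain an attracting set of dimension $0$, not $1$ --- not even the right object, let alone a non-algebraic one. The paper's trapping region is the global tube $U_\rho=\{|t|<\rho\max(|z|,|w|)\}$ around the entire line at infinity; there is no base-trapping at all (nor can there be, since $f_\infty$ maps $\proj^1$ onto itself), and $\A$ is a solenoid laminated over $\proj^1$, not a graph over a disk. The Banach fixed-point section cannot exist globally either: a holomorphic solution of $t^\ast\circ f_\infty=(t^\ast)^d+\varepsilon R$ over all of $\proj^1$ would be an invariant algebraic curve, contradicting the very non-algebraicity you are trying to establish.

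Your preimage count also skips the actual obstruction, which is where the paper's generic conditions and the sets $X,Y$ come from. A point of $l_\xi\cap f(U_\rho)$ has $t$-coordinate $s^d+\varepsilon R(\xi_{-1})$ with $|s|<\rho$, hence lies in one of $d$ disks of radius comparable to $\rho^d$ centred at the $d$ values $\varepsilon R(\xi_{-1})$, one per preimage line $l_{\xi_{-1}}$. The claim that only one of the $d$ roots of $t^d=s-\varepsilon R(\xi_{-1})$ lands in the tube requires $|s-\varepsilon R(\xi_{-1})|$ to be large compared with $\rho^d$, and this fails exactly when two centres coincide or differ by a $d$-th root of unity --- the sets the paper calls $X$ and $Y$. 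Near those lines a point can have up to $d^2$ preimages in $f(U_\rho)$, and one only obtains small topological degree for an iterate ($f^3$ on $f(U_\rho)$) by controlling how $Z=X\cup Y$ returns under $f_\infty^{-1}$. The Zariski-open conditions defining $\Omega$ are $X\cap Y=\emptyset$ and $f_\infty^{-2}(Z)\cap f_\infty^{-1}(Z)\cap Z=\emptyset$, not the fixed-point and resultant conditions you propose: the bad example $(P,Q,R)=(z^d,w^d,z^d)$ fails because $R$ factors through $f_\infty$, which makes $X=\proj^1$, and none of your proposed non-degeneracy conditions would detect this.
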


\begin{rmq}
Since having an attracting set of small topological degree is an open condition, by perturbing in the set of all endomorphisms of $\proj^2$, we obtain maps which do not preserve a pencil but also satisfy the conclusion of Theorem \ref{theoreme}.
\end{rmq}

The proof gives specific examples in any degree and we can arrange for $f$ to be topologically mixing on $\A$,  therefore we obtain the following :

\begin{prop}\label{prop exist attracteur non pp}
There exists  an \emph{attractor} $\A\subset \proj^2$  of small topological degree.
\end{prop}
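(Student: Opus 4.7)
The plan is to refine the construction of Theorem \ref{theoreme} so that, besides the attracting set $\A$ being of small topological degree, $f$ additionally acts topologically mixing on $\A$. Within the Zariski open set $\Omega\subset\F_d$ from Theorem \ref{theoreme}, I would impose on $(P,Q)$ the additional (open in the usual topology) requirement that the horizontal map $f_\infty:[z:w]\mapsto[P(z,w):Q(z,w)]$ has Julia set equal to all of $\proj^1$, equivalently that $f_\infty$ is topologically mixing on $\proj^1$. This is realized by abundant explicit examples in every degree $d\geq 2$ (Lattès maps, Chebyshev-like examples, or many generic pairs $(P,Q)$), and the combined condition still defines a nonempty subset of $\Omega$, since the defining inequalities of $\Omega$ are algebraic while the mixing condition is open.

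For such a triple $(P,Q,R)$ and small $\varepsilon \neq 0$, topological mixing of $f$ on $\A$ would be deduced from the pencil-preserving structure of $f$. The projection $\pi:\proj^2\setminus\{[0:0:1]\}\to\proj^1$ from $[0:0:1]$ realizes the semi-conjugacy $\pi\circ f=f_\infty\circ \pi$, and since the trapping region $U$ of Theorem \ref{theoreme} covers every direction of the pencil, we have $\pi(\A)=\proj^1$. Horizontal mixing on $\A$ therefore follows from topological mixing of $f_\infty$ on $\proj^1$. Along each fiber of $\pi$, the dynamics is essentially $t\mapsto (t^d+\varepsilon R(z_0,w_0))/\lambda(z_0,w_0)$, a degree-$d$ polynomial which for small $\varepsilon$ perturbs $t\mapsto t^d/\lambda$ and hence possesses nontrivial expanding Julia dynamics on its intersection with $\A$. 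Combining these two mixing properties via a skew-product type argument should produce full topological mixing on $\A$.

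The main obstacle is precisely this combination of horizontal and fiberwise mixing. Given two nonempty open sets $V_1,V_2\subset\A$, mixing of $f_\infty$ yields some $N$ such that $f_\infty^n(\pi(V_1))\cap\pi(V_2)\neq\emptyset$ for $n\geq N$, and the delicate task is to lift this base-level overlap to an intersection $f^n(V_1)\cap V_2\neq\emptyset$ inside $\A$. This requires that iterates of $V_1$ spread within the fibers of $\pi|_\A$ over the overlap region, which should follow from the expanding nature of the fiber polynomial together with continuous dependence of the fiber dynamics on the base point (so that the spreading can be realized uniformly over a base neighborhood). If this direct route runs into trouble, a fallback is to invoke Theorem \ref{th dinh} and the extremality of the attracting current $\tau$ (which by Theorem \ref{theoreme} has a bounded quasi-potential, hence positive mass on every open subset of $\A$) to identify $\A$ with the support of an ergodic invariant measure $\nu = T\wedge \tau$, and then upgrade ergodicity to topological mixing using the aperiodicity furnished by the topological mixing of $f_\infty$ in the base.
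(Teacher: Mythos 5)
There is a genuine gap, and you have identified it yourself: the ``delicate task'' of lifting the base-level mixing to mixing on $\A$. Neither of your two proposed routes resolves it. The skew-product argument is exactly where the difficulty lives, and you give no mechanism to control how fibre overlaps are preserved under iteration (the fibre maps are not uniformly expanding on $\A$, and $\pi\vert_\A$ is far from a fibre bundle). Your fallback is also not sound: extremality of $\tau$ in the cone of invariant currents does not give ergodicity of $\nu=T\wedge\tau$ in any directly quotable form, and even ergodicity plus ``aperiodicity of the base'' does not upgrade to \emph{topological} mixing of $f\vert_\A$.

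The paper closes the gap differently. It quotes (Forn\ae ss--Sibony, Lemma~2.12 in \cite{FSdyn}, and Rong, Corollary~4.15 in \cite{R}) that there is a continuous surjective semi-conjugacy from the \emph{natural extension} $(\hat{\proj}^1,\hat f_\infty)$ onto $(\A,f\vert_\A)$. The inverse limit of a topologically mixing system is mixing, and a factor of a mixing system is mixing; so $J_\infty=\proj^1$ immediately gives mixing of $f\vert_\A$ with no skew-product analysis at all. This is the ingredient your argument is missing. The paper then simply exhibits one explicit map, $f=[(z-2w)^2:z^2:t^2+\varepsilon(z^2+zw)]$, checks the combinatorial conditions of Proposition~\ref{prop petit deg top} by hand, and observes $J_\infty=L_\infty$ for the associated $f_\infty$.

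One smaller inaccuracy: you assert that $\{f_\infty : J(f_\infty)=\proj^1\}$ is open in the usual topology. It is not --- having nonempty Fatou set is the open condition, so $J=\proj^1$ cuts out a closed set. Since you only need a single example, this does not affect the logic, but the genericity framing is misleading. The structural move you should make is: keep your condition $J_\infty=\proj^1$, but replace the skew-product heuristic by the semi-conjugacy with the natural extension.
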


We now proceed to the proof of Theorem \ref{theoreme}.

\begin{proof}[Proof of theorem \ref{theoreme}]
The proof will proceed in four steps. First we show that any $f$ of the form \eqref{def f} admits an attracting set and satisfies the conditions of Theorem \ref{th dinh}. Next we give a sufficient condition ensuring that $f^3$ is of small topological degree on some trapping region. The third step is to show that this condition is algebraic. Finally, we give examples in any degree, therefore showing that the condition is generically satisfied.
\\

\textbf{Step 1.} The map $f$ admits an attracting set and satisfies the conditions of Theorem \ref{th dinh}.
\\

As $R$ is a homogeneous polynomial of degree $d$, there exists $\beta >0$ such that for all $(z,w)\in\C^2 $ we have that $|R(z,w)|\leq \beta \max(|z|,|w|)^d$. 
Up to multiplying $\varepsilon$ by $\beta$ we may assume that $\beta =1$.
Denote by $\alpha$ the constant $\alpha = \underset{\max(|z|,|w|)=1}{\inf} \max (|P(z,w)|,|Q(z,w)|)$, as $(0,0)$ is the single common zero of $P,Q$, we have that $\alpha >0$.

Let $U_{\rho}$ be the open set of  $[z:w:t]\in \proj^2$ such that $|t|<\rho \max(|z|,|w|)$.
Then we have
$|t^d+\varepsilon R(z,w)|<(\rho^d+\varepsilon) \max(|z|,|w|)^d <\frac{1}{\alpha}(\rho^d+\varepsilon) \max(|P(z,w)|,|Q(z,w)|)$, by definition of  $\alpha$.
To assume that $f(U_{\rho}) \Subset U_{\rho}$, all we need is that $\frac{1}{\alpha}(\rho^d+\varepsilon)<\rho$. 
We choose $\varepsilon \ll 1$. We let the reader check that the choice 
\begin{equation}\label{choix rho}
\rho = 4\frac{\varepsilon}{\alpha}
\end{equation}
is convenient.

Thus $\A= \cap_{n\in \N} f^n (U_\rho)$ is an attracting set.
Denote by $l_{[z:w]}$ the line passing through $[0:0:1]$ and $[z:w:0]$ and by $L_\infty$ the line at infinity.
The line $l_{[z:w]}$ intersects $U_\rho$ into an open disc centered at $[z:w:0]$. Thus, by taking $I=\lbrace [0:0:1] \rbrace$, $J=L_\infty$ and $U=U_\rho$, $f$ satisfies the hypothesis \eqref{cond dinh}.
\\

\textbf{Step 2.} A sufficient condition for being of small topological degree.
\\

Here we will introduce for $(P,Q,R)\in\F_d$ subsets $X$ and $Y$ of $L_\infty \simeq \proj^1$ and a condition on these sets insuring that for all $\varepsilon\neq 0$ small enough $f^3$ is of small topological degree on $f(U_\rho)$, where $f$ is defined by \eqref{def f}, (see Proposition \ref{prop petit deg top}).

Throughout the proof, by a line we mean a line passing through $[0:0:1]$, unless for the line at infinity $L_\infty$. 
Recall that the image and the preimages of a line is a line and that $f(l_{[z:w]})=l_{f_\infty({[z:w]})}$. For the sake of convenience, we will confuse the line $l_{[z:w]}$ with the point $[z:w]$ in $L_\infty$ and denote by $f_\infty$ the action of the lines. So $X$ and $Y$ can be seen as a set of points in $L_\infty$ or a set of lines.

The attracting set we obtain is a complex version of the solenoid.
The difference is that the ``branches'' of $f(U_\rho)$ must necessarily cross, so the map cannot be injective. 
The proof consists in analysing the geometry of $f(U_\rho)$, as well as its self-crossings, and the behaviour of the preimages, such that estimating the number of preimages staying in $f(U_\rho)$ reduces to a combinatoric problem.

For maps of the form \eqref{def f}, a generic line has $d$ preimages 
and a generic point has $d^2$ preimages (in $\proj^2$), with $d$ in each line.
So there are two ways to control the number of preimages (which lie in $f(U_\rho)$) of a point :
\begin{itemize}
\item controlling the number of lines that contain preimages staying in $U_\rho$.
\item For each points in such a line, bounding the number of preimages that belong to $f(U_\rho)$.
\end{itemize}
We leave the reader check that the image of a disc in $l_{[z:w]}$ centered at $[z:w:0]$ is a disc in $l_{f_\infty ([z:w])}$ centered at $f([z:w:0])$.
A line $l$ having  $d$ preimages (with multiplicity), $f(U_\rho)\cap l$ is made of at most $d$ discs which might intersect.
\def\svgwidth{2cm}
\begin{figure}[!h]
\begin{center}
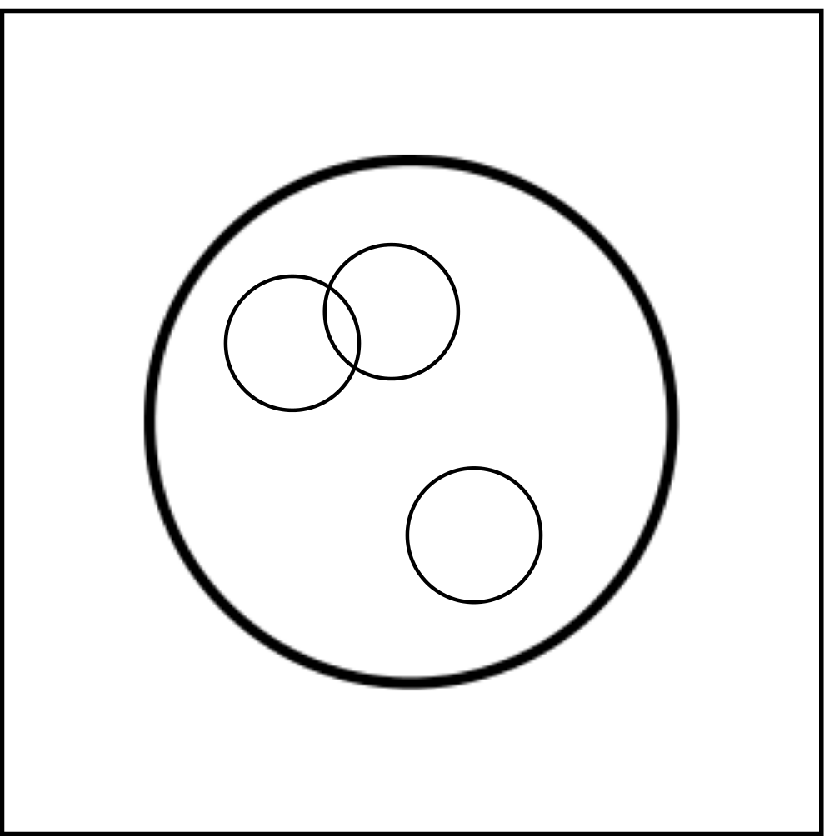
\caption{An example of the intersection of a line with $U_{\rho}$ (large circle) and $f(U_{\rho})$ (small circle). Here $f$ is of degree 3.}
\end{center}
\end{figure}

Denote by $A$ the set of lines $l$ such that
 $f(U_\rho)\cap l$ is made of $d$  disjoint discs.
Thus if $l\in A$,  the preimages of a point $p\in f(U_\rho)\cap l$, 
which lie in $U_\rho$, are contained in a single line $l_{-1}$, i.e. $f^{-1}(p)\cap U_\rho\subset l_{-1}$.
 Through the end of the proof, for each $i\in \lbrace 1,..,d\rbrace$,  we normalize $(z_i,w_i)$ so that  $$P(z_i,w_i)=z \text{ and } Q(z_i,w_i)=w.$$
 
The line $l_{[z:w]}$ is in $A$ iff for  $i\neq j$ in $\lbrace 1,..,d \rbrace$ and $t_i,t_j$ such that $[z_i:w_i:t_i],[z_j:w_j:t_j]\in U_\rho$ we have  that $t_i^d + \varepsilon R(z_i,w_i) \neq  t_j^d + \varepsilon R(z_j,w_j)$.
By definition of $U_\rho$, for this, it suffices that
\begin{equation}\label{cond A0}
|\varepsilon R(z_i,w_i) -  \varepsilon R(z_j,w_j) |>  \rho^d (\max (|z_i|,|w_i|)+\max (|z_j|,|w_j|))^d.
\end{equation}
\def\svgwidth{10cm}
\begin{figure}[!h]
\begin{center}
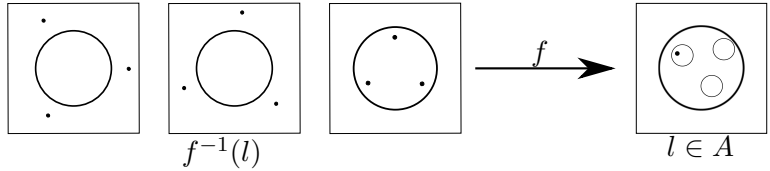
\caption{The intersection of a line $l\in A$ with $U_{\rho}$ and $f(U_{\rho})$, on the right, and the preimages of $l$ and of a point in $f(U_{\rho})\cap l$, on the left.}
\end{center}
\end{figure}

The points $[z:w: \e^{\frac{2ik\pi}{d}}t ]$ all have the same image and are either all in $U_\rho$, or all outside $U_\rho$. 
Therefore, for any line  $l$ and any point $p\in l\cap f(U_\rho)$, if we fix a preimage $l_{[z:w]}$ of $l$, the point $p$ has $d$ preimages (with multiplicity) in the line $l_{[z:w]}$, they are of the form $[z:w: \e^{\frac{2ik\pi}{d}}t ]$. 
Thus,  $p$ has 0 or $d$ preimages (with multiplicity) in $l_{[z:w]}\cap U_\rho$.
To further reduce the number of preimages we must see if they are or not in $f(U_\rho)$.

Denote  by $B_{-1}$ the set of lines $l_{[z:w]}$ such that for all $|t|<\rho \max(|z|,|w|)$ there exists at most one $k\in \lbrace 0,..,d-1\rbrace$
such that $[z:w: \e^{\frac{2ik\pi}{d}}t ] \in f(U_\rho)$ and denote by $B$ the set of lines whose preimages all of lie in  $B_{-1}$.

The line  $l_{[z:w]}$ is in $B_{-1}$ iff for all $i,j\in \lbrace 1,..,d \rbrace$, all $t_i,t_j$ such that $[z_i:w_i:t_i],[z_j:w_j:t_j]\in U_\rho$ and all $k\in \lbrace 1,..,d-1 \rbrace$ we have that $t_i^d + \varepsilon R(z_i,w_i) \neq \e^{\frac{2ik\pi}{d}} (t_j^d + \varepsilon R(z_j,w_j))$.
For this, it suffices that for each $k\in \lbrace 1,..,d-1 \rbrace$
\begin{equation}\label{cond A1}
|\varepsilon R(z_i,w_i) -  \e^{\frac{2ik\pi}{d}}\varepsilon R(z_j,w_j) |>  \rho^d (\max (|z_i|,|w_i|)+\max (|z_j|,|w_j|))^d.
\end{equation}

 \def\svgwidth{6cm}
\begin{figure}[!h]
\begin{center}
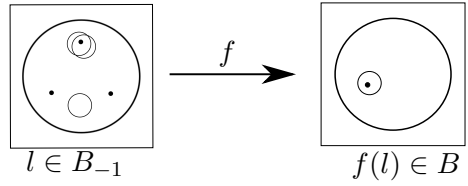
\caption{The intersection of the lines $l$ and $f(l)$ with $U_{\rho}$ and $f(U_{\rho})$, and, the preimages of a point in $f(l)\cap f(U_{\rho})$, when $l\in B_{-1}$.}
\end{center}
\end{figure}

In this way, if $l \in B$ and $p\in l\cap f^2(U_\rho)$, for any preimage $l_{-1}$ of $l$,
$p$ has at most one preimage in $l_{-1} \cap f(U_\rho)$.
If in addition $l\in B \cap A$ then a point $p\in l\cap f^2(U_\rho)$ has a single preimage in
$f(U_\rho)$.
\\

We need to understand the complement of $A \cap B$.
\\

Denote by $l_{[z_1:w_1]},..,l_{[z_d:w_d]}$ the preimages (with possible repetitions)  of a line $l_{[z:w]}$.

A sufficient condition for a line $l_{[z:w]}$ not to be in $A$ is that there exists  $i\neq j$ in $\lbrace 1,..,d \rbrace$  such that $f([z_i:w_i:0]) = f([z_j:w_j:0])$.
Denote by $X_{-1}$ the set of lines $l_{[z_i:w_i]}, l_{[z_j:w_j]}$ which satisfy this condition and let $X=f_\infty(X_{-1})$.
Therefore, $l_{[z:w]} \in X$ iff there exists $i\neq j$ in $\lbrace 1,..,d\rbrace $  such that $f(l_{[z_i:w_i]}\cap U_\rho) = f(l_{[z_j:w_j]}\cap U_\rho)$.
 \def\svgwidth{10cm}
\begin{figure}[!h]
\begin{center}
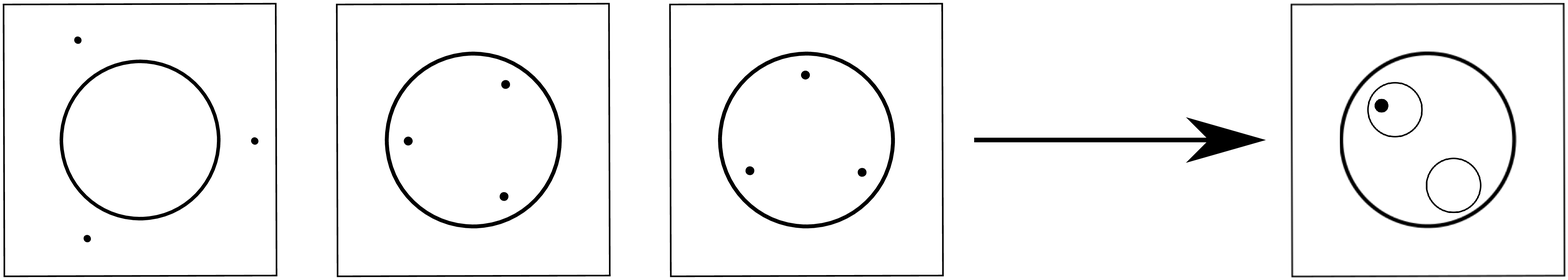
\caption{The problem that can occur when a line is in $X$.}
\end{center}
\end{figure}

A point in $f(U_\rho)$ is of the form $[P(z,w):Q(z,w):t^d + \varepsilon R(z,w)]$ with $|t|<\rho \max(|z|,|w|)$.
A sufficient condition for a line $l_{[z:w]}$ not to be in $B_{-1}$ is that there exists $i,j\in \lbrace 1,..,d \rbrace$ and $k\in \lbrace 1,..,d-1 \rbrace$ such that  $[P(z_i,w_i):Q(z_i,w_i):\varepsilon R(z_i,w_i)] = [P(z_j,w_j):Q(z_j,w_j):\varepsilon \e^{\frac{2ik\pi}{d}} R(z_j,w_j)]$. Possibly $i=j$ in which case this correspond to $R(z_i,w_i) = 0$ and necessarily, as in $l_{[z:w]}\cap U_\rho$, there will be points in $f(l_{[z:w]})$ that have $d$ preimages in $l_{[z:w]}\cap f(U_\rho)$.
Thus, two problems  can occur (when $i=j$ or not).
Denote by $Y_{-2}$ the set of lines $l_{[z_i:w_i]}, l_{[z_j:w_j]}$ which satisfy this condition and let $Y_{-1}=f(Y_{-2})$ and $Y=f_\infty^2(Y_{-2})$.

By definition of $X$ and $Y$, 
\begin{eqnarray} 
\label{def X}l_{[z:w]}\in X &\Longleftrightarrow &\exists  i\neq j \in \lbrace 1,..,d \rbrace \text{ such that } \\
& &R(z_i,w_i) - R(z_j,w_j)=0 \Longleftrightarrow l_{[z_i:w_i]}, l_{[z_j:w_j]} \in X_{-1} \nonumber\\
\label{def Y} f(l_{[z:w]})\in Y &\Longleftrightarrow  &\exists  i,j\in \lbrace 1,..,d \rbrace, \, k\in \lbrace 1,..,d-1 \rbrace  \text{ such that }\\
& & R(z_i,w_j) - \e^{\frac{2ik\pi}{d}} R(z_j,w_j)=0 \Longleftrightarrow l_{[z_i:w_i]}, l_{[z_j:w_j]} \in Y_{-2}.\nonumber
\end{eqnarray}
 \def\svgwidth{6cm}
\begin{figure}[!h]
\begin{center}
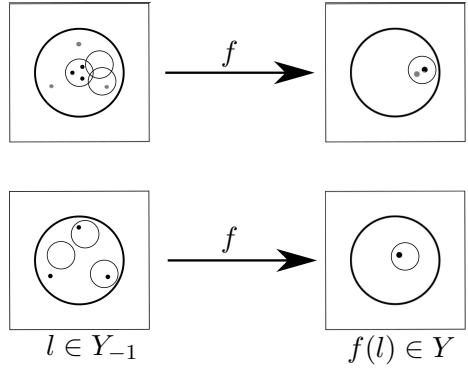
\caption{The two problems that can occur when a line is in $Y$.
 The black (resp. grey) points in $l$ are the preimages of the  black (resp. grey) point in $f(l)$.}
\end{center}
\end{figure}

By definition, we have that $X \cup Y \subset (A \cap B)^c$.
In fact, $l\in X$ (resp. $l\in Y$) precisely when $l$ is not in $A$ (resp. $B$) for any $\varepsilon$.

Denote by $f^+_\infty$ the map $f^+_\infty([z:w])=[P(z,w): Q(z,w):R(z,w)]$ and by $pr_1$ the map $pr_1:(l,l') \mapsto l$.
As $(l,l')\in X_{-1}$ (resp. $Y_{-2}$) iff $(l',l)\in X_{-1}$ (resp. $Y_{-2}$), we will, sometimes, identify $X_{-1}, Y_{-2}$ with $pr_1(X_{-1}), pr_1(Y_{-2})$, particularly in the examples. 

We note for further reference that, by identifying $l_{[z:w]}$ to $[z:w]\in \proj^1$, $X_{-1}$ and $Y_{-2}$ may be written formally as :
\begin{equation}\label{X-1 Y-2} 
\begin{cases}
X_{-1}=& \left\lbrace (p,q)\in \proj^1\times \proj^1 ; p\neq q, f_\infty^+(p)=f_\infty^+(q)\right\rbrace \\   & \bigcup \left\lbrace (p,p)\in \proj^1\times \proj^1 ; p \text{ is a critical point of } f_\infty\right\rbrace\\
\\
Y_{-2}= & \left\lbrace (p,q)\in \proj^1\times \proj^1 ; f_\infty^+(p)\neq f_\infty^+(q), f\circ f_\infty^+(p)=f\circ f_\infty^+(q), f_\infty(p)=f_\infty(q)\right\rbrace 
\\ 
& \bigcup \left\lbrace (p,p)\in \proj^1\times \proj^1 ; R(p)=0\right\rbrace\\
\\
X = &f_\infty(pr_1(X_{-1}))\\
Y =  &f^2_\infty(pr_1(Y_{-2})).
\end{cases}
\end{equation}

The next proposition is a sufficient condition to be of small topological degree on the attracting set $\A$.
Recall that $A,B,X,Y$ are sets of lines passing through $[0:0:1]$, that we identify such a line with a point in $L_\infty \simeq \proj^1$ and that the dynamics of the pencil of these lines  is the same as that of $f_\infty:[z:w]\rightarrow [P(z,w):Q(z,w)]$. 
Note that $X_{-1}, Y_{-2},X,Y$ only depend on the choice of $(P,Q,R)\in \F_d$, not on $\varepsilon$.

\begin{prop}\label{prop petit deg top}

Denote by $Z$ and $\mathscr{Z}$ the sets $Z=X \cup Y$ and $\mathscr{Z}=\left( f_\infty ^{-1}(Z)  \right) \bigcap Z$.
If $(P,Q,R)\in \F_d$ are such that

\begin{enumerate}
\item \label{cond ens. distincts}$X $ and $ Y$ are disjoint
\item \label{cond pt qui retombe} $ \left( f_\infty ^{-1} (\mathscr{Z})  \right) \bigcap Z = \emptyset$, or equivalently  $  f_\infty ^{-2} (Z) \bigcap f_\infty ^{-1} (Z)  \bigcap Z = \emptyset$
\end{enumerate}
then for  small enough $\varepsilon \neq 0$ there exists $\rho$ such that $f^3$ is of small topological degree on $f(U_\rho)$, with $f$ define by \eqref{def f}.
\end{prop}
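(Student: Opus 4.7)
The plan is to prove that, for small $\varepsilon \neq 0$ and $\rho = 4\varepsilon/\alpha$, every point $p \in f^4(U_\rho)$ has at most $d^2$ preimages under $f^3$ lying in $f(U_\rho)$. Since the threshold $(d^3)^{k-m}$ for $f^3$ with $k-m = 1$ equals $d^3 > d^2$, this establishes small topological degree of $f^3$ on $f(U_\rho)$, hence of $f$ on $\A$.

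The first task is an asymptotic reduction. In \eqref{cond A0} the left-hand side has order $\varepsilon \cdot |R(z_i,w_i) - R(z_j,w_j)|$ while the right-hand side has order $\varepsilon^d$ (from $\rho = 4\varepsilon/\alpha$), so the inequality holds as soon as the $R$-quantity is bounded below and $\varepsilon$ is small; the same applies to \eqref{cond A1}. To make this uniform, I would introduce small $\delta$-neighborhoods $X_\delta \supset X$ and $Y_\delta \supset Y$ in $L_\infty \simeq \proj^1$: the relevant $R$-quantities are continuous on the compact $L_\infty$ and strictly positive off $X$ (resp.\ $Y$), hence bounded below on $L_\infty \setminus X_\delta$ (resp.\ $L_\infty \setminus Y_\delta$), so for $\varepsilon$ small enough (depending on $\delta$) one obtains $l \notin X_\delta \Rightarrow l \in A$ and $l \notin Y_\delta \Rightarrow l \in B$. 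Condition (1) together with continuity gives $X_\delta \cap Y_\delta = \emptyset$ for $\delta$ small, and the finite-intersection property on the compact $L_\infty$ applied to condition (2) yields $Z_\delta \cap f_\infty^{-1}(Z_\delta) \cap f_\infty^{-2}(Z_\delta) = \emptyset$ for $\delta$ small, with $Z_\delta := X_\delta \cup Y_\delta$.

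This produces a single-step bound: for $p \in l \cap f^2(U_\rho)$, the number of preimages in $f(U_\rho)$ is at most $1$ if $l \notin Z_\delta$ (since then $l \in A \cap B$), and at most $d$ if $l \in Z_\delta$ (since by disjointness $l$ lies in $A$ or in $B$, each giving factor $d$ through the preceding geometric analysis). For the main count, organize $(f^3)^{-1}(p) \cap f(U_\rho)$ with $p \in f^4(U_\rho)$ as the leaves of a depth-$3$ tree rooted at $p$, whose children of $q$ are $f^{-1}(q) \cap f(U_\rho)$; along any root-to-leaf path the lines $l_0, l_{-1}, l_{-2}$ form an $f_\infty$-orbit $l_{-2} \to l_{-1} \to l_0$, and condition (2) forbids all three from lying in $Z_\delta$. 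Case analysis: if $l_0 \notin Z_\delta$, the root has at most $1$ child and the subtree beneath it contributes at most $d \cdot d = d^2$ leaves; if $l_0 \in Z_\delta$, the root has at most $d$ children, and each child on line $l_{-1}$ either satisfies $l_{-1} \notin Z_\delta$ (so it has at most $1$ grandchild with at most $d$ great-grandchildren below) or $l_{-1} \in Z_\delta$ (in which case condition (2) forces $l_{-2} \notin Z_\delta$, giving at most $d$ grandchildren each with at most $1$ great-grandchild); either way each child contributes at most $d$ leaves, totalling at most $d^2$.

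The main obstacle is the asymptotic reduction: one must carefully insert the $\delta$-neighborhoods and verify that conditions (1) and (2) both survive the enlargement from $X, Y$ to $X_\delta, Y_\delta$ uniformly in $\varepsilon$. Once this uniform combinatorial picture is in hand, the single-step bound follows directly from the geometric analysis of $A$ and $B$ developed in Step 2 of the enclosing proof, and the tree count is an elementary case analysis delivering $\#(f^3)^{-1}(p) \cap f(U_\rho) \leq d^2 < d^3$, as required.
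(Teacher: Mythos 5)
Your proposal is correct and follows essentially the same route as the paper: the $\delta$-neighborhood enlargement with the compactness/lower-bound argument is exactly the paper's $r$-neighborhood lemma giving $A^c\subset X^r$, $B^c\subset Y^r$, and your depth-3 tree count (using disjointness of $X_\delta,Y_\delta$ for the single-step bound of $d$, and condition (2) to forbid three consecutive lines in $Z_\delta$) is the paper's Case 1/Case 2.1/Case 2.2 analysis phrased via $\mathscr{Z}^r$.
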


\begin{rmq}
By \eqref{X-1 Y-2}, it is clear that $X_{-1},Y_{-2},X,Y$ are non empty algebraic set. Moreover, if the condition \ref{cond ens. distincts} of Proposition \ref{prop petit deg top} is satisfied, then $X_{-1},Y_{-2},X,Y$ are finite.
\end{rmq}

See Section \ref{section version simple} for a simplified version of these conditions.

\begin{proof}[Proof of Proposition \ref{prop petit deg top}]
For $\varepsilon$ as before, we fix $\rho = 4\frac{\varepsilon}{\alpha}$, as in \eqref{choix rho}, so that $f(U_{\rho}) \Subset U_{\rho}$.
Denote by $X^r$ (resp. $Y^r,Z^r,\mathscr{Z}^r$) the $r$-neighbourhood of $X$ (resp. $Y,Z,\mathscr{Z}$) in $L_\infty$.
We choose $r$ small enough such that $X^r$ (resp. $Y^r,Z^r,\mathscr{Z}^r$) is the union of disjoint open discs centered at the points of $X$ (resp. $Y,Z,\mathscr{Z}$) and
such that we have that :
\begin{itemize}
\item $X^r \cap Y^r =\emptyset$, by hypothesis \eqref{cond ens. distincts},
\item $f_\infty^{-1}(Z^r) \cap Z^r \subset \mathscr{Z}^r $, 
since $\left( f_\infty ^{-1}(Z)  \right) \bigcap Z= \mathscr{Z}$,
\item and $f_\infty^{-1}(\mathscr{Z}^r ) \cap Z^r=\emptyset$, by hypothesis \eqref{cond pt qui retombe}.
\end{itemize}
\begin{lemme}
Fix $\rho$ and $r$ as above. Then if $\varepsilon\neq 0$ is small enough, we have that 
\begin{equation}
A^c\subset X^r, \, B^c\subset Y^r, \\
\text{ and } X\cup Y=Z\subset (A\cap B)^c\subset Z^r.
\end{equation}
\end{lemme}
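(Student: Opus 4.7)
The lemma packages four containments: $A^c\subset X^r$, $B^c\subset Y^r$, $X\subset A^c$, and $Y\subset B^c$, together with the tautology $Z=X\cup Y$. The last two are $\varepsilon$-independent and fall out of the definitions: if $l\in X$, a witnessing pair of preimages has $R(z_i,w_i)=R(z_j,w_j)$, so inequality \eqref{cond A0} fails with left-hand side $0$, and symmetrically $Y\subset B^c$ by inspecting \eqref{cond A1}. Assembling, $Z=X\cup Y\subset A^c\cup B^c=(A\cap B)^c$, and once the first two containments are in hand we get $(A\cap B)^c = A^c\cup B^c \subset X^r\cup Y^r\subset Z^r$.

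\textbf{Main estimate ($A^c\subset X^r$).} I would argue by contradiction and compactness. Fix $r>0$ and assume the existence of sequences $\varepsilon_n\to 0$ and $l_n\in A^c\setminus X^r$. For each $n$, pick indices $i_n\neq j_n$ among the normalized preimages of $l_n$ for which \eqref{cond A0} fails; substituting $\rho=4\varepsilon/\alpha$ from \eqref{choix rho} and dividing by $\varepsilon_n$, that failure reads
\[
|R(z_{i_n},w_{i_n})-R(z_{j_n},w_{j_n})|\;\leq\;\left(\tfrac{4}{\alpha}\right)^{d}\varepsilon_n^{d-1}\bigl(\max(|z_{i_n}|,|w_{i_n}|)+\max(|z_{j_n}|,|w_{j_n}|)\bigr)^{d}.
\]
The assumption that $(0,0)$ is the single common zero of $P,Q$ makes $(P,Q):\C^2\to\C^2$ proper, so after normalizing affine lifts of $[z_n:w_n]$ to the unit sphere the preimages stay in a fixed compact set $K\subset\C^2\setminus\{0\}$. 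Pigeonholing the (finitely many) choices of pair of indices, and extracting convergent subsequences of preimages, I obtain a limit pair $(z_\infty^{i},w_\infty^{i}),(z_\infty^{j},w_\infty^{j})$ with $i\neq j$ and equal $R$-values. By \eqref{X-1 Y-2} this limit pair lies in $X_{-1}$, so $l_n=f_\infty(\cdot)$ clusters in $X=f_\infty(X_{-1})$, contradicting $l_n\notin X^r$ for large $n$.

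\textbf{Second estimate and main obstacle.} The inclusion $B^c\subset Y^r$ follows the same template, but through two preimage layers: $l\notin B$ says some preimage $l_1$ of $l$ is not in $B_{-1}$, which in turn supplies a pair of normalized preimages of $l_1$ and an integer $k\in\{1,\ldots,d-1\}$ violating \eqref{cond A1}. Dividing by $\varepsilon_n$ and running the same compactness-plus-pigeonhole extraction (now also over the finite set of $k$'s) produces a limit pair satisfying $R(z_{i'}^\infty,w_{i'}^\infty)=\e^{2ik\pi/d}R(z_{j'}^\infty,w_{j'}^\infty)$, hence lying in $Y_{-2}$ by \eqref{X-1 Y-2}. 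Consequently $l_1$ clusters in $Y_{-1}=f_\infty(Y_{-2})$ and $l=f_\infty(l_1)$ clusters in $Y=f_\infty^2(Y_{-2})$, the desired contradiction. The main (minor) obstacle is the index bookkeeping: one must pass to a subsequence on which all the chosen indices $i_n,j_n$ (and $k_n$ in the $B^c$ case) stabilize, and justify that the normalization of preimages can be chosen uniformly in $[z_n:w_n]$ — both handled by the properness of $(P,Q)$ and the finiteness of the index sets.
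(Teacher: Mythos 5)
Your proof is correct and rests on the same two pillars as the paper's: the scaling $\rho^d=(4/\alpha)^d\varepsilon^d$ (so that dividing the failure of \eqref{cond A0} or \eqref{cond A1} by $\varepsilon$ leaves a factor $\varepsilon^{d-1}\to 0$ since $d\geq 2$), and compactness of the relevant preimage sets via properness of $(P,Q)$. The only difference is presentational: the paper extracts a uniform lower bound $\gamma>0$ for the normalized quantity $|R(z_i,w_i)-R(z_j,w_j)|$ on the compact complement of $f_\infty^{-1}(X^r)\cup f_\infty^{-2}(Y^r)$ and then checks $\varepsilon\gamma>\rho^d$ directly, whereas you phrase the same compactness as a sequential contradiction; these are dual formulations of one argument, not a different route.
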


\begin{proof}
Indeed, the first inclusion is due to the definition of $X$ and $Y$.
Regarding the second inclusion, recall that $l_{[z_1:w_1]},.. ,l_{[z_d:w_d]}$ denoted the preimages (with possible repetitions) of a line $l_{[z:w]}$.

By \eqref{def X} and \eqref{def Y}, since $\proj^1 \setminus \left(  f^{-1}(X^r) \cup f^{-2}(Y^r)  \right)$ is compact and 
$X_{-1} \subset f_\infty^{-1}(X^r)$ and $Y_{-2} \subset f_\infty^{-2}(Y^r)$,
there exists $\gamma>0$ such that if $l_{[z_i:w_i]}, l_{[z_j:w_j]} \notin f^{-1}(X^r)$ with $i\neq j$  then 
$$| R(z_i,w_i) -   R(z_j,w_j) |>  \gamma (\max (|z_i|,|w_i|)+\max (|z_j|,|w_j|))^d$$
 and if $l_{[z_i:w_i]}, l_{[z_j:w_j]} \notin  f^{-2}(Y^r)$, ($i,j$ can be equal here) then for each $k\in \lbrace 1,..,d-1 \rbrace$ 
 $$| R(z_i,w_i) - \e^{\frac{2ik\pi}{d}}  R(z_j,w_j) |> \gamma (\max (|z_i|,|w_i|)+\max (|z_j|,|w_j|))^d.$$

Since $\rho = 4\frac{\varepsilon}{\alpha}$, $d\geq 2$ and $\gamma$ is independent of $d$ and $\varepsilon$, if $\varepsilon$ is small enough 
then $\varepsilon \gamma > \rho^d$, which implies that
if $l_{[z:w]}\notin X^r$ and $i,j$ are distinct then
\begin{equation}\label{ineg V0}
|\varepsilon R(z_i,w_i) -  \varepsilon R(z_j,w_j) |>  \rho^d \left(\max (|z_i|,|w_i|)+\max (|z_j|,|w_j|) \right)^d
\end{equation}
and if $l_{[z:w]}\notin f_\infty^{-1}(Y^r)$, ($i,j$ can be equal here) then for all $k\in \lbrace 1,..,d-1 \rbrace$
\begin{equation}\label{ineg V1}
 |\varepsilon R(z_i,w_i) - \e^{\frac{2ik\pi}{d}}  \varepsilon R(z_j,w_j) |>  \rho^d (\max (|z_i|,|w_i|)+\max (|z_j|,|w_j|))^d.
\end{equation}

If $l_{[z:w]}\notin X^r$ then for all $i,j$, $l_{[z_i:w_i]}, l_{[z_j:w_j]} \notin  f_\infty^{-1}(X^r)$ with $i\neq j$, thus,
 from \eqref{cond A0} and \eqref{ineg V0}, $l_{[z:w]}\in A$.
If $l_{[z:w]}\notin f_\infty^{-1}(Y^r)$ then for all $i,j$, we have that $l_{[z_i:w_i]}, l_{[z_j:w_j]} \notin  f_\infty^{-2}(Y^r)$,
thus, from \eqref{cond A1} and \eqref{ineg V1}, $l_{[z:w]}\in B_{-1}$.
Consequently, if $l_{[z:w]}\notin Y^r$, no preimage of $l_{[z:w]}$ is in $f_\infty^{-1}(Y^r)$, thus
all preimages of $l_{[z:w]}$ are in $B_{-1}$, i.e. $l_{[z:w]}\in B$.
Therefore, if $l_{[z:w]}\notin Z^r$ then $l_{[z:w]}\in A\cap B$, i.e.
 $(A \cap B)^c \subset Z^r$.
 
 This finish the proof of the lemma.
 \end{proof}

To finish the proof of Proposition \ref{prop petit deg top} we will prove that a point $p\in f^4(U_\rho)$ has at most $d^2$ preimages under $f^3$ (which lie in $ f(U_\rho)$). Denote by $\pi:\proj^2\setminus \lbrace [0:0:1]\rbrace\rightarrow L_\infty $ the projection defined by $\pi([z:w:t])=[z:w]$.

The proof is summarized the following diagram :
\\
\scalebox{0.8}{
\xymatrix{
 & \card(f^{-1}(p)\cap f(U_\rho))=1 \ar@{-}[rr]& & \card(f^{-2}(p)\cap f(U_\rho))\leq d\ar@{-}[r] & \card(f^{-3}(p)\cap f(U_\rho))\leq d^2 \\
1 \ar@{-}[ru]+L^{\Case 1.}\ar@{-}[rd]+L^{\Case 2.} & &&\card(f^{-2}(p)\cap f(U_\rho))\leq d\ar@{-}[r] & \card(f^{-3}(p)\cap f(U_\rho))\leq d^2 \\
 &\card(f^{-1}(p)\cap f(U_\rho))\leq d\ar@{-}+R;[rru]+L^{\Case 2.1.}\ar@{-}[rr]^{\Case 2.2.} & &  \card(f^{-2}(p)\cap f(U_\rho))\leq d^2 \ar@{-}[r] & \card(f^{-3}(p)\cap f(U_\rho))\leq d^2
}
}
\\

A point in $\pi^{-1}( (Z^r)^c ) \cap f^2 (U_\rho)$ has one preimage in $f (U_\rho)$.
Moreover, the complementary of $B$ is contained in $Y^r$ and the complement of $A$ is contained in $X^r$.
Since $X^r \cap Y^r =\emptyset$, 
we infer that a point in $ \pi^{-1}(Z^r) \cap f^2 (U_\rho)$ has at most $d$ preimages in $f (U_\rho)$.
Thus, a point has at most $d$ preimages under $f$ in $f (U_\rho)$.
\\

For the sake of simplicity, until the end of the proof, without precision by preimage we mean preimage in $f (U_\rho)$.
Pick a point $p\in f^4(U_\rho)$.
\\
Case 1. If $p\notin \pi^{-1}(Z^r)\cap f(U_\rho)  $ then $p\in \pi^{-1}(A\cap B)$, so $p$ has one preimage under $f$ and $\card \lbrace  f^{-3}(p) \cap U_\rho  \rbrace \leq d^2$.
\\
Case 2.  If $p\in \pi^{-1}(Z^r)\cap f(U_\rho)$ then $p$ has at most $d$ preimages under $f$.
For each preimage of $p$ we have two cases :
\\
Case 2.1. Either it is still in $\pi^{-1}(Z^r)\cap f(U_\rho)$. Thus it has $d$ preimages under $f$ which are in 
$ \pi^{-1}(\mathscr{Z}^r) $ (because $f_\infty^{-1}(Z^r) \cap Z^r \subset \mathscr{Z}^r $),
 but since $f_\infty^{-1}(\mathscr{Z}^r ) \cap 
Z^r=\emptyset$, these preimages leave $\pi^{-1}(Z^r)\cap f(U_\rho)$ and thus have only one preimage under $f$.
It follows that this preimage of $p$ has at most $d$ preimages under $f^2$.
\\
Case 2.2. Or it is outside $\pi^{-1}(Z^r)\cap f(U_\rho)$, then it has one preimage. It follows that this preimage of $p$ has  at most $d$ preimages under $f^2$.

Therefore, $\card \lbrace  f^{-3}(p)\cap f(U_\rho)   \rbrace \leq d^2$ in all cases.
Consequently, $f^3$ is of small topological degree on $f(U_\rho)$.
This concludes the proof of the Proposition.
\end{proof}

Denote by $\Omega$ the subset  of $(P,Q,R)\in \F_d$ satisfying the hypotheses of Proposition \ref{prop petit deg top}. 
\\

At this point to finish the proof of Theorem \ref{theoreme} it only remains to prove that $\Omega$  is a Zariski open set and that it is not empty.
\\

\textbf{Step 3.} The subset $\Omega$ of $ \F_d$  is a Zariski open set.\\

This is simply an exercise in complex geometry.
Recall from \eqref{X-1 Y-2} that
$X=pr_1\circ (f_\infty\otimes f_\infty) (X_{-1})$, $Y=pr_1\circ (f_\infty^2\otimes f_\infty^2) (Y_{-2})$,
 $Z=X \cup Y$ and $\mathscr{Z}=\left( f_\infty ^{-1}(Z)  \right) \bigcap Z$, where $pr_1(p,q)=p$.
As already observed, $X,Y,Z$ only depend on the choice of $(P,Q,R)\in \F_d$ and not on the choice of $\varepsilon\neq 0$.
Recall that the conditions of Proposition \ref{prop petit deg top} are
\begin{enumerate}
\item $X$ and $ Y$ are disjoint 
\item $ \left( f_\infty ^{-1} (\mathscr{Z})  \right) \bigcap Z = \emptyset$  or equivalently $f_\infty ^{-2} (Z) \bigcap f_\infty ^{-1} (Z) \bigcap Z = \emptyset$.
\end{enumerate}
Set
$$
\begin{array}{ll}
\widetilde{X}_{-1} = & \lbrace (p,q,(P,Q,R))\in \proj^1\times \proj^1 \times \F_d; \, p\neq q, f_\infty^+(p)=f_\infty^+ (q)\rbrace \\
  & \bigcup 
\lbrace (p,p,(P,Q,R))\in \proj^1\times \proj^1 \times \F_d ;\, p \text{ is a critical point of } f_\infty\rbrace , \\
\\
 \widetilde{Y}_{-2}  = & \lbrace (p,q,(P,Q,R))\in \proj^1\times \proj^1 \times \F_d ;\, f_\infty^+(p)\neq f_\infty^+(q), f\circ f_\infty^{+}(p)=f\circ f_\infty^{+}(q), f_\infty(p)=f_\infty(q)\rbrace \\
 & \bigcup \lbrace (p,p,(P,Q,R))\in \proj^1\times \proj^1 \times \F_d; \, R(p)=0 \rbrace ,
\end{array}
$$
and denote by $\Phi$ the map $\Phi : (p,q,(P,Q,R))\mapsto (f_\infty (p),f_\infty (q), (P,Q,R))$ and by $\widetilde{X},\widetilde{Y},\widetilde{Z}$ the sets  $\widetilde{X}=\Phi (\widetilde{X}_{-1})$, $ \widetilde{Y}= \Phi^2(\widetilde{Y}_{-2})$ and $ \widetilde{Z}= \widetilde{X}\bigcup \widetilde{Y}$.
Thus $\left\lbrace (p,q,f_\infty (p),f_\infty (q), (P,Q,R));(p,q, (P,Q,R))\in \widetilde{X}_{-1} \right\rbrace$ is an algebraic subvariety of $\proj^1\times \proj^1 \times \proj^1\times \proj^1 \times \F_d$ as well as $\left\lbrace (p,q,f^2_\infty (p),f^2_\infty (q), (P,Q,R));(p,q, (P,Q,R))\in \widetilde{Y}_{-2} \right\rbrace$.
Therefore,
 $ \widetilde{X}, \widetilde{Y}$ and $ \widetilde{Z}$ are algebraic subvarieties in $\proj^1\times \proj^1 \times \F_d$, and $ \widetilde{X} \cap \widetilde{Y}$ and $\Phi^{-2}( \widetilde{Z}) \cap \Phi^{-1}( \widetilde{Z}) \cap  \widetilde{Z}$ are algebraic subvarieties too.
The complementary of $\Omega$ is in the image of $ \left( \widetilde{X} \cap \widetilde{Y} \right) \bigcup \left(\Phi^{-2}( \widetilde{Z}) \cap \Phi^{-1}( \widetilde{Z}) \cap  \widetilde{Z} \right)$ by the projection  $(p,q,(P,Q,R))\mapsto (P,Q,R)$, thus it is a Zariski closed  subset in $\F_d$.
\\

\textbf{Step 4.} The Zariski open set $\Omega$ defined in step 3 is not empty.
\\

Let us show that $f=[w^d+a z^d:b w^d+z^d:t^d + \varepsilon (z^d+z^{d-1}w)]$ satisfies the hypotheses of Proposition \ref{prop petit deg top} for almost every $a,b\in \C$ small enough. We first assume $|a|,|b|>0$.

If $l_{[z:w]}$ is a preimage of a line, the other preimages are $l_{[z:\e^{\frac{2ik\pi}{d}}w]}$ with $k\in \lbrace 1,..,d-1 \rbrace$. For $l\in \lbrace 1,..,d-1 \rbrace , k\in \lbrace 0,..,d-1 \rbrace$ put $$z_{k,l}=[\e^{\frac{2i(k+l)\pi}{d}}-1:1-\e^{\frac{2il\pi}{d}}].$$
We let the reader check that $X_{-1}=\lbrace  [0:1],[1:0]  \rbrace$ and 
\begin{eqnarray}\nonumber
Y_{-2} & =\lbrace  [\e^{\frac{2i(k+l)\pi}{d}}-1:1-\e^{\frac{2il\pi}{d}}] ;  l\in \lbrace 1,..,d-1 \rbrace , k\in \lbrace 0,..,d-1 \rbrace \rbrace\\ \nonumber
 & =  \lbrace  [0:1]\rbrace \bigcup \lbrace z_{k,l} ;  l\in \lbrace 1,..,d-1 \rbrace, k\in \lbrace 0,..,d-1 \rbrace, k+l\neq d \rbrace .
\end{eqnarray}
Thus
 \begin{eqnarray*}
&X & = \lbrace [1:b],[a:1]  \rbrace ,\\
%\\
%f2(Y_{-2})
&Y &=f_\infty^2(Y_{-2})
 =\lbrace  f_\infty([1:b])\rbrace \bigcup \lbrace f_\infty^2(z_{k,l}) ;  l\in \lbrace 1,..,d-1 \rbrace, k\in \lbrace 0,..,d-1 \rbrace, k+l\neq d \rbrace.
 \end{eqnarray*}

Suppose that $a,b\in \C^*$ are small and $a\sim b$.

If $k+l=d$ then $z_{k,l}=0=[0:1]$, $f_\infty(z_{k,l})=[1:b]$ and $f_\infty^2(z_{k,l})=[b^d+a:b^{d+1}+1]\approx [a:1]$,
otherwise $f_\infty(z_{k,l})\approx z_{k,l}^{-d}$ and $f_\infty^2(z_{k,l})\approx z_{k,l}^{2d}$. 

If $k+l\neq d$ then $z_{k,l}$ $f_\infty(z_{k,l})$ and
 $f_\infty^2(z_{k,l})$ are far from $[0:1],[1:0]$.
In particular, $\lbrace f_\infty(z_{k,l}) , f_\infty^2(z_{k,l})\rbrace \cap \lbrace  [0:1],[1:0],[1:b],[a:1],[b^d+a:b^{d+1}+1],[1+a^{d+1}:b+a^d]  \rbrace =\emptyset$.
We conclude that, if $a,b\in \C^*$ are small and $a\sim b$ then $X$ and $ Y$ are disjoint, i.e. $f$ satisfies the hypothesis \eqref{cond ens. distincts} of Proposition \ref{prop petit deg top}. To check assumption \eqref{cond pt qui retombe}, write
  %Z
$$Z=X \cup Y=
\lbrace  [1:b],[a:1],f_\infty([1:b])\rbrace \bigcup \lbrace f_\infty^2(z_{k,l}); k+l\neq d \rbrace,$$
so that
%f-1(Z)
$$f_\infty^{-1}(Z)=\lbrace  [0:1],[1:0],[1:\e^{\frac{2in\pi}{d}} b], \e^{\frac{2in\pi}{d}}f_\infty(z_{k,l}) | n\in \N  \rbrace.$$

The $z_{k,l}$ are independent of $a,b$, so we can choose $a,b$ so that
 $\lbrace \e^{\frac{2in\pi}{d}}f_\infty(z_{k,l}) \rbrace \bigcap \lbrace f_\infty^2(z_{k,l}) \rbrace = \emptyset$,
thus $f_\infty^{-1}(Z)\bigcap Z=\lbrace  [1:b] \rbrace$.
Since $f_\infty^{-1}([1:b])=\lbrace [0:1]\rbrace \notin Z$, we conclude  that $f$ satisfies the second assumption  of Proposition \ref{prop petit deg top}.

This completes the proof of Theorem \ref{theoreme}.
\end{proof}

%%%%%%%%%%%
% preuve th fini

We now prove Proposition \ref{prop exist attracteur non pp}.

\begin{proof}[Proof of Proposition \ref{prop exist attracteur non pp}]
We first show that $f=[(z-2w)^2:z^2:t^2 + \varepsilon (z^2+zw)]$ satisfies the hypotheses of Proposition \ref{prop petit deg top}.
The preimages of $L_{[\alpha:\beta]}$ are $L_{[2\sqrt{\beta}: \sqrt{\beta}-\sqrt{\alpha}]}$ and $L_{[2\sqrt{\beta}: \sqrt{\beta}+\sqrt{\alpha}]}$.
We let the reader check that $X=\lbrace [0:1],[1:0] \rbrace$ and $Y=\lbrace f_\infty ([1:-1]),f_\infty ([1:9])\rbrace=\lbrace [9:1],[17^2:1] \rbrace$, which are disjoint.

Thus $Z= \lbrace [0:1],[1:0],[9:1],[17^2:1] \rbrace$ and $ f_\infty ^{-1}(Z) =\lbrace [2:1],[0:1],[1:2],[1:-1],[1:9],[1:8] \rbrace$, therefore
$\mathscr{Z}=\left( f_\infty ^{-1}(Z)  \right) \bigcap Z=\lbrace [0:1],[1:9] \rbrace$.
Finally, $f_\infty^{-1}(\mathscr{Z})=\lbrace [2:1],[1:2],[1:-1] \rbrace $ and $f$ satisfies the hypotheses of Proposition \ref{prop petit deg top}.

Thus, by Proposition \ref{th potentiel bourne} and \ref{prop petit deg top}, $f$ has an attracting set $\A\subset \proj^2$ supporting a positive closed current of bidegree (1,1) which admits a bounded quasi potential.

From the work of \cite{FSdyn} Lemma 2.12 and \cite{R} corollary 4.15,  the natural extension $\hat{f}_\infty$ of $f_\infty$ is semi-conjugate to $f$ restricted to $\A$. 
Since $J_\infty=L_\infty$, $f_\infty$ is topologically mixing on $L_\infty$, thus $f$ is topologically mixing on $\A$ and $\A$ is an attractor.
\end{proof}

\section{Non algebraic attracting sets in higher codimension} \label{section pluri non alg}
In this part, we will explain why the condition of being of small topological degree is not enough to ensure non pluripolarity in higher codimension. 
For this we exhibit attracting sets of small topological degree in $\proj^3$ (of codimension 2) which are contained in a hyperplane (see Theorem \ref{th att cont in hyperplan}).
 We also construct the first explicit example of a Zariski dense attracting set of higher codimension (see Theorem \ref{th ex Z dense}).

\begin{theoreme}\label{th att cont in hyperplan}
For a generic choice of $(P,Q,R)\in \F_d$, there exists $\varepsilon_1(P,Q,R), \varepsilon_2(P,Q,R)>0$ such that for all $0<|\varepsilon_1|<\varepsilon_1(P,Q,R)$ and $0<|\varepsilon_2|<\varepsilon_2(P,Q,R)$, the mapping 

$$\begin{array}{ccccl}
f & : & \proj^3 & \longrightarrow &  \proj^3  \\
& & [z:w:t:u] & \mapsto &  [P(z,w) :Q(z,w):t^d+\varepsilon_1 R(z,w):u^d+\varepsilon_2 Q(z,w) -\varepsilon_2^d w^d] \\
\end{array}$$
is of small topological degree on an attracting set 
$\A$ of dimension $1$ which is contained in $\lbrace [z:w:t:u];u=\varepsilon_2 w \rbrace\simeq \proj^2$.
 
 In particular, $\A$ is non algebraic nevertheless it is contained in a hyperplane.
\end{theoreme}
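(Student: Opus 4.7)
The strategy is to reduce the $\proj^3$ problem to the $\proj^2$ problem treated in Theorem~\ref{theoreme}, via an $f$-invariant hyperplane. Fix the trapping region
$$U = \{|t|<\rho\max(|z|,|w|)\}\cap\{|u-\varepsilon_2 w|<\rho'\max(|z|,|w|)\}$$
in $\proj^3$. Step~1 of the proof of Theorem~\ref{theoreme} applied to the first factor together with a parallel estimate on the second factor shows that $f(U)\Subset U$ provided $\rho\sim \varepsilon_1/\alpha$ and $d(\rho'+\varepsilon_2)^{d-1}<\alpha$, both of which hold for $\varepsilon_1,\varepsilon_2$ small enough. The hyperplane $H=\{u=\varepsilon_2 w\}$ is $f$-invariant by direct substitution: if $u=\varepsilon_2 w$, then
$$u^d+\varepsilon_2 Q(z,w)-\varepsilon_2^d w^d = (\varepsilon_2 w)^d+\varepsilon_2 Q(z,w)-\varepsilon_2^d w^d = \varepsilon_2 Q(z,w)=\varepsilon_2 w_{new}.$$

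In the affine coordinate $v=u-\varepsilon_2 w$, a short algebraic manipulation gives $v_{new} = v\cdot S(u,w)$ with $S(u,w)=\sum_{i=0}^{d-1} u^{d-1-i}(\varepsilon_2 w)^i$. On $U$ one has $|S(u,w)|\le d(\rho'+\varepsilon_2)^{d-1}\max(|z|,|w|)^{d-1}$, so the normalized ratio $|v|/\max(|z|,|w|)$ is strictly contracted by a factor $c = d(\rho'+\varepsilon_2)^{d-1}/\alpha<1$ at each iteration. Consequently $\A = \bigcap_n f^n(U)\subset\{v=0\}=H$. Under the identification $H\cong\proj^2$ sending $[z:w:t:\varepsilon_2 w]$ to $[z:w:t]$, the restriction $f|_H$ is exactly the $\proj^2$-endomorphism of Theorem~\ref{theoreme}, and $U\cap H$ is its trapping region $U_\rho^{(2)}$. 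For $(P,Q,R)$ in the Zariski open set $\Omega\subset\F_d$ of Theorem~\ref{theoreme}, $f|_H$ admits an attracting set $\A_2\subset U_\rho^{(2)}$ of small topological degree. A standard maximal-compact-invariant-set argument combined with $\A\subset H\cap U$ gives $\A=\A_2$; in particular $\A$ is non-algebraic by Proposition~\ref{prop non alg} applied to $f|_H$.

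To check that $\A$ has dimension $k-m=1$ (codimension $2$) in $\proj^3$, I would apply the Example following Definition~\ref{def dimension} to the two projective lines $I=\{z=w=0\}$ and $J=\{t=0,\,u=\varepsilon_2 w\}$ (both of projective dimension $1=m-1=k-m$ with $m=2$). One checks $J\subset U$ (on $J$ both $t$ and $v$ vanish) and $I\cap U=\emptyset$ (on $I$, $\max(|z|,|w|)=0$), so $U$ carries a positive closed current of bidimension $(1,1)$ but none of bidimension $(2,2)$.

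The remaining and most delicate point is the small topological degree of $f$ on $U$. Projecting via $\pi:[z:w:t:u]\mapsto[z:w:t]$, any preimage of $p$ under $f^n$ lying in $U$ projects to a preimage of $\pi(p)$ under $(f|_H)^n$ lying in $U_\rho^{(2)}$; Theorem~\ref{theoreme} bounds the number of such $\proj^2$-chains by $O(d^{2n/3})$. For each $\proj^2$-chain, the $u$-coordinate at each step satisfies $u^d = \varepsilon_2^d w^d + (\text{correction vanishing on }H)$, whose $d$ solutions cluster near $\varepsilon_2 w\cdot\zeta_d^j$, $j=0,\dots,d-1$. The trapping inequality $|u-\varepsilon_2 w|<\rho'\max(|z|,|w|)$ selects only the principal root $j=0$ provided $\rho'<\varepsilon_2\min_{j\neq 0}|1-\zeta_d^j|\cdot |w|/\max(|z|,|w|)$. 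The technical obstacle is the ``bad region'' $\{|w|\ll\max(|z|,|w|)\}$ in $\proj^2$, inside which several $u$-preimages can fit in the $v$-trap; one controls it by choosing $\rho'$ sufficiently small compared with $\varepsilon_2$ so that the bad region is a thin tubular neighborhood of $\{w=0\}$, and by observing that along $(f|_H)^{-1}$-orbits it is visited at a rate that becomes negligible as $\varepsilon_2\to 0$. The net preimage count is then bounded by $d^{2n/3+o(1)}$, which stays strictly below $d^n=d^{n(k-m)}$ and yields small topological degree. Combined with the preceding steps this completes the proof, and shows, in contrast to Theorem~\ref{th potentiel bourne}, that in higher codimension small topological degree does \emph{not} prevent pluripolarity, since $\A$ is contained in a hyperplane.
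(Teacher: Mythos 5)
Your overall strategy is the same as the paper's: decouple the dynamics into the $\proj^2$ part $f_1:[z:w:t]\mapsto[P:Q:t^d+\varepsilon_1 R]$ (handled by Theorem~\ref{theoreme}) and the extra $u$-coordinate, use the invariant hyperplane $H=\{u=\varepsilon_2 w\}$ and the factorization $u^d-\varepsilon_2^d w^d=(u-\varepsilon_2 w)\sum u^{d-1-i}(\varepsilon_2 w)^i$ to show $\A\subset H$, and then count $u$-preimages by observing that the trap $|u-\varepsilon_2 w|<\rho'\max(|z|,|w|)$ selects a single $d$-th root except near $\{w=0\}\sim[1:0]$. The first parts of your argument (invariance of $H$, contraction to $H$, dimension count with $I=\{z=w=0\}$, $J=\{t=0,u=\varepsilon_2 w\}$) are correct and match the paper.

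However, the final small-topological-degree step has a genuine gap. You claim that the bad region near $[1:0]$ is ``visited at a rate that becomes negligible as $\varepsilon_2\to 0$,'' but this is not correct as stated: the $\proj^1$ dynamics $f_\infty=[P:Q]$ are completely independent of $\varepsilon_2$, so shrinking $\varepsilon_2$ shrinks the bad disc around $[1:0]$ but has no effect on how often backward $f_\infty$-orbits revisit a neighborhood of $[1:0]$. In particular, if $[1:0]$ happens to be a fixed or low-period point of $f_\infty$ (e.g.\ $Q(1,0)=0$), backward orbits can stay in the bad disc at \emph{every} step, and the $u$-preimage count will saturate at $d$ per step, destroying the small-degree bound entirely. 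The conditions of Proposition~\ref{prop petit deg top} do \emph{not} exclude this. What is needed — and what the paper supplies — is an additional generic condition on $(P,Q,R)$: namely that $[1:0]$ is not periodic of period $\le 6$ under $f_\infty$ (so that one can find a disc $\D$ around $[1:0]$ with $f_\infty^{-i}(\D)\cap\D=\emptyset$ for $i=1,\dots,6$). This guarantees that along any backward $\proj^1$-chain of length $6$, at most one step lands in the bad disc, giving at most $d$ admissible $u$-preimages over the six steps; multiplied by the $d^4$ bound on the $f_1^6$-preimage count, one gets at most $d^5<d^6$ preimages of $f^6$ in $f(U)$. Without that additional (algebraic, and nonempty hence generic) condition and the accompanying disc-avoidance argument, your ``rate'' heuristic is unfounded and the proof does not close.
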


\begin{rmq}
If $f: [z:w:t]  \mapsto   [P_0(z,w,t) :P_1(z,w,t):P_2(z,w,t)]$ admits an attracting set $\A$ of dimension 1 then $\tilde{\A} =\lbrace [z:w:t:u] \, |\, [z:w:t]\in \A,u=0 \rbrace$ is an attracting set for $\tilde{f} : [z:w:t:u] \mapsto  [P_0(z,w,t) :P_1(z,w,t):P_2(z,w,t):u^d]$. But $\tilde{f}$ is not of small topological degree on $\tilde{\A}$ even if $f$ is of small topological degree on $\A$. 
This is why we need a non-trivial construction in Theorem \ref{th att cont in hyperplan}.
\end{rmq}

\begin{proof}
Denote by $f_1,f_2$ the maps
$$
\begin{array}{ll}
f_1:&[z:w:t]\mapsto[P(z,w) :Q(z,w):t^d+\varepsilon_1 R(z,w)],\\
f_2:&[z:w:u]\mapsto[P(z,w) :Q(z,w) : u^d+\varepsilon_2 Q(z,w) -\varepsilon_2^d w^d]
\end{array}
$$
and denote by $U_1,U_2$ the open sets
 $$U_1=\lbrace [z:w:t];|t|<\rho_1\max(|z|,|w|) \rbrace , U_2=\lbrace [z:w:u];|u-\varepsilon_2 w|<\rho_2\max(|z|,|w|) \rbrace .$$
First we choose $(P,Q,R)$ in the non empty Zariski open set of $\F_d$ such that the map $f_{1}$  satisfies the hypotheses of Proposition \ref{prop petit deg top} and fix $\varepsilon_1(P,Q,R)>|\varepsilon_1|>0$ small enough and $\rho_1>0$ such that $f_{1}(U_{1})\Subset U_{1}$, see Step 1 of the proof of Theorem \ref{theoreme}.
By modifying the end of Step 2 of the proof of Theorem \ref{theoreme}, a point in $f_{1}^7 (U_{1})$ has at most $d^4$ preimages in $f_{1}(U_{1})$ under $f^6_1$.

It is clear that the hyperplane $\lbrace [z:w:t:u];u=\varepsilon_2 w \rbrace$ is invariant under $f$ and we see that the dynamics on it is the same as that of $f_{1}$. Denote by $\rho_2$ the constant $\rho_2=c\varepsilon_2$, with $c>0$, and by $U$ the set
$$U=\left\lbrace [z:w:t:u]; |t|<\rho_1 \max(|z|,|w|), |u-\varepsilon_2 w|<\rho_2 \max(|z|,|w|)  \right\rbrace.$$
We choose $\varepsilon_2(P,Q,R)$ small enough such that $((1+c)^d+1)\varepsilon_2(P,Q,R)^d<c\varepsilon_2(P,Q,R)$ and we will fix $c$ later.
 We let the reader check that $f(U)\Subset U$ and denote by $\A=\bigcap f^n(U)$, hence $\A$ is an attracting set of dimension 1.

We now prove that, for a generic choice of $(P,Q,R)\in \F_d$, $f^6$ is of small topological degree on $f(U)$.

Let us further assume that for all $i\in \lbrace 1,..,6 \rbrace$ $f_\infty^i([1:0])\neq [1:0]$ or equivalently  
$[1:0]\notin f_\infty^{-i}([1:0])$ where $f_\infty:\proj^1\rightarrow \proj^1$ is defined by $f_\infty([z,w])=[P(z,w):Q(z,w)]$.
 Arguing as in Step 3 of the proof of Theorem \ref{theoreme}, we see that this condition is algebraic. To show that it is generically satisfied, we need to find an example. 
Let us give an example.
We know by Step 4 of the proof of Theorem \ref{theoreme} that for almost every $(a,b)\in \C^2$ with $|a|,|b|>0$ small enough $f_{1}:[z:w:t]\mapsto [w^d+a z^d:b w^d+z^d:t^d + \varepsilon_1 (z^d+z^{d-1}w)]$ satisfies the hypotheses of Proposition \ref{prop petit deg top}. For all $(a,b)\in \R^2$ such that $a,b>0$, we have that 
for each $i\in \N^*$ $f^i_{1} ([1:0])\neq [1:0]$, because if $(z,w)\neq (0,0)$ are non negative real numbers then $w^d+a z^d,b w^d+z^d$ are positive.
Therefore, it is a generic condition.

Fix $\D$ a disc centered at $[1:0]$ in $\proj^1$ such that for all $i\in \lbrace 1,..,6 \rbrace$ $f_\infty^{-i}(\D)\cap \D = \emptyset$, fix $c>0$ such that if $[z:w]\notin \D$ then for each $l\in \lbrace 1,..,d-1 \rbrace$ 
\begin{equation}
|\e^{\frac{2il\pi}{d}}w-w|>3c\max(|z|,|w|).     \nonumber
\end{equation}
This implies that if $[z:w:u]\in U_{2}$ and $[z:w]\notin \D$ then for each $l\in \lbrace 1,..,d-1 \rbrace$,
\begin{equation}
|\e^{\frac{2il\pi}{d}}u-\varepsilon_2 w|>\rho_2\max(|z|,|w|)  
 \text{, i.e. } [z:w:\e^{\frac{2il\pi}{d}} u]\notin U_{2}.    \nonumber
\end{equation}

Let $[z:w:u]\in f^2_2(U_{2})$ and $[z_{-1}:w_{-1}]$ such that $f_\infty([z_{-1}:w_{-1})=[z:w]$.
If $[z_{-1}:w_{-1}]\notin \D$ there exists a unique $u_{-1}$, such that $f_2([z_{-1}:w_{-1}:u_{-1}])=[z:w:u]$ and $[z_{-1}:w_{-1}:u_{-1}]\in f_2(U_{2})$.
 Otherwise, if $[z_{-1}:w_{-1}]\in \D$ there exists at most $d$ such values $u_{-1}$.
Thus, if we fix $[z_{-6}:w_{-6}]$  such that $f_{\infty}^6([z_{-6}:w_{-6}])=[z:w]$, since for all $i\in \lbrace 1,..,6 \rbrace$ $f_\infty^{-i}(\D)\cap \D = \emptyset$, there exists at most one $l\in \lbrace 0,..,6 \rbrace$ such that $f_\infty^l([z_{-6}:w_{-6}])\in \D$.
So there  exists at most $d$ values $u_{-6}$, such that  $f^6_2([z_{-6}:w_{-6}:u_{-6}])=[z:w:u]$ and $[z_{-6}:w_{-6}:u_{-6}]\in f_2 (U_{2})$.

Let $[z:w:t:u]\in f^7(U)$, then $ [z:w:t]\in f_{1}^7(U_{1})$, so there exists at most $d^4$ preimages $[z_{-6}:w_{-6}:t_{-6}]$ in $f_{1}(U_{1})$  such that $f_{1}^6([z_{-6}:w_{-6}:t_{-6}])=[z:w:t]$.
For each choice $[z_{-6}:w_{-6}]$, there  exists at most $d$ values $u_{-6}$, such that 
$f^6_2([z_{-6}:w_{-6}:u_{-6}])=[z:w:u]$ and $[z_{-6}:w_{-6}:u_{-6}]\in f_2 (U_{2})$.
Then for each choice $[z_{-6}:w_{-6}:t_{-6}]\in f_1(U_1)$, there  exists at most $d$ values $u_{-6}$, such that 
$f^6([z_{-6}:w_{-6}:t_{-6}:u_{-6}])=[z:w:t:u]$ and $[z_{-6}:w_{-6}:t_{-6}:u_{-6}]\in f (U)$.
In conclusion, a point in $f^7(U)$ has at most $d^5$ preimages under $f^6$ in $f(U)$.
Thus $f^6$ is of small topological degree on $f(U)$.
\end{proof}

We now turn to the Zariski dense example of codimension 2 in $\proj^3$.
We first recall some results about H\'enon-like maps.

Let $W$ be an open set and $\D$ an open disc, denote by $\B$ and $\partial_h \B $ the sets $ \B= W\times \D$ and
 $\partial_h \B = W \times \partial \D$, $\partial_v \B = (\partial W)\times \D$, $N(\B)$ a neighbourhood of $\B$ and $d_t$ the maximum number of preimages in $\B$ of a point in $f(\B)\cap\B$.

\begin{defi}
The map $f:N(\B) \rightarrow \C^2$ is called a horizontal-like mapping if
  \begin{enumerate}
  \item $f(\partial_v \B)\cap \B=\emptyset$
  \item $ f(\overline{\B})\cap \partial \B \subset \partial_v \B$,
  \end{enumerate}
If, furthermore, $f$ is injective, $f$ is said to be Hénon-like.
\end{defi}

See \cite{Du} for some proprieties of H\'enon-like and horizontal-like mappings.
A current $T$ is said to be horizontal if its support is contained in $W\times\D_{1-\varepsilon}$.

\begin{prop}
Let $f$ be a horizontal-like map. Then there exists an integer $d \geq 1$ such that
for every normalized horizontal positive closed current $T$ in $\B$, $\frac{1}{d}f_*T$ is normalized, horizontal, positive and closed in $\B$.

The integer $d$ is called the degree of $f$
\end{prop}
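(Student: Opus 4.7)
My plan is to verify the four conclusions (positive, closed, horizontal, normalized) separately, with the main substance being the extraction of a universal integer $d$. The positivity, closedness, and bidegree $(1,1)$ of $f_*T$ are immediate from standard properties of the pushforward by a holomorphic map: $f_*$ preserves positivity and commutes with $d, d^c$. For horizontality, the horizontal-like axioms give $f(\overline{\B}) \cap \partial_h \B = \emptyset$, so that $f(\supp T) \subset f(\overline{\B})$ is a compact subset of $\overline{\B}$ avoiding $\partial_h \B$; by compactness one obtains $\varepsilon' > 0$ with $\supp(f_*T) \cap \B \subset W \times \D_{1-\varepsilon'}$, which is the horizontality condition for $f_*T$.

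The integer $d$ should be defined as the (constant) number of preimages of a generic point $q \in f(\B)\cap \B$ under $f|_\B$, counted with multiplicity. The condition $f(\partial_v\B)\cap \B = \emptyset$ makes the relevant restriction of $f$ proper, so $d$ is a well-defined positive integer depending only on $f$ and not on $T$. To verify the normalization, I would use the slicing description of horizontal $(1,1)$-currents: the slice mass $m(T) := \int T \wedge [\{z_0\}\times \D]$ is a positive real number independent of $z_0 \in W$ by closedness of $T$, and normalization means $m(T) = 1$. For a generic vertical disc $V = \{z^*\}\times \D \subset \B$, the projection formula gives
\[
\int_V f_*T \;=\; \int_{f^{-1}(V)\cap \B} T,
\]
and by the horizontal-like axioms $f^{-1}(V) \cap \B$ decomposes, with multiplicities, into $d$ horizontal analytic curves each of which crosses $\B$ from one component of $\partial_v\B$ to another. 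Each such curve plays the role of a rotated vertical slice on which $T$ has slice mass $m(T)$, so summing yields $m(f_*T) = d\, m(T)$, and hence $\tfrac{1}{d} f_*T$ is normalized whenever $T$ is.

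The main obstacle I expect is making this last step precise: one needs that the $d$ sheets of $f^{-1}(V)\cap \B$ indeed carry slice mass equal to $m(T)$ (with the correct multiplicities at ramification points of $f$), and that the projection formula applies at the level of currents and not merely of smooth forms. This is the content of the Dinh--Sibony slicing theory for horizontal positive closed currents, and the horizontal-like axioms are designed so that all the properness and boundary-avoidance conditions required to apply this theory hold automatically. Once the formalism is in place, the identity $m(f_*T)=d\,m(T)$ becomes the natural analogue of the pullback/pushforward mass formula for currents under proper holomorphic maps.
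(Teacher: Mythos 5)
The paper gives no proof of this proposition; it simply refers to \cite{dds} and \cite{Du}, so there is nothing to compare line by line. Your plan (dispose of positivity, closedness and horizontality quickly, then show that $f_*$ multiplies the slice mass by a fixed integer) is indeed the standard route, and the first three points are handled correctly. The problem is the definition you give for $d$.

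You define $d$ as the number of preimages in $\B$ of a generic point of $f(\B)\cap\B$, i.e.\ the topological degree of $f|_\B$. But this is exactly the quantity the paper calls $d_t$ in the sentence preceding the definition of horizontal-like maps, and it is in general \emph{not} the degree of the horizontal-like map. A Hénon-like map is by definition an \emph{injective} horizontal-like map, so its topological degree is $1$; yet its degree $d$ is typically larger --- a solenoid-type map $f(\theta,w)=(d\theta,\, w/2+\epsilon e^{i\theta})$ on an annulus times a disc is injective and has degree $d$, and the Hénon-like map in Lemma~\ref{lemme henon} of this paper is injective with degree $2$. The whole point of ``small topological degree'' in this paper is that $d_t$ can be strictly smaller than $d$, so identifying them at the start is a fatal slip. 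The correct $d$ is the one you implicitly switch to in your final paragraph: it is the slice mass of $f_*[\{z^*\}\times\D]$, equivalently the degree of $f^{-1}(\{z^*\}\times\D)\cap\B$ over $\D$. Relatedly, $f^{-1}(V)\cap\B$ for a vertical disc $V$ is a \emph{vertical} analytic subset of $\B$, not a union of horizontal curves joining components of $\partial_v\B$ as you write: its closure can only meet $\partial\B$ inside $\partial_h\B$, since $f(\partial_v\B)\cap\B=\emptyset$ and $f(\overline{\B})\cap\partial\B\subset\partial_v\B$ exclude the other possibilities. It is precisely because these sheets are vertical graphs that each one carries $T$-slice-mass $m(T)$, and then $m(f_*T)=d\,m(T)$ does follow from $\langle f_*T,[V]\rangle=\langle T,[f^{-1}(V)]\rangle$ as you intended. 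With the definition of $d$ and the orientation of the preimage corrected, the argument goes through.
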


\begin{proof}
See \cite{dds} or \cite{Du}.
\end{proof}

\begin{theoreme}\label{th pot holder}
Let $f$ be a Hénon-like map in $\B$. If $T$ is an invariant current by $\frac{1}{d}f_*$, then the potentials of $T$ are Hölder continuous.
\end{theoreme}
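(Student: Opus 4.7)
The plan is to prove Hölder continuity of the local potentials of $T$ by exhibiting $u$ as the sum of a telescoping series $u = v_0 + \sum_{n \geq 0}(v_{n+1}-v_n)$, where the $v_n$ are local potentials of the push-forward iterates $\Lambda^n S_0$ of a smooth initial current $S_0$, $\Lambda := \frac{1}{d}f_*$. I would combine an exponential $L^\infty$--decay of the differences $v_{n+1}-v_n$ (coming from contraction of $\Lambda$ on the space of normalized horizontal currents, which crucially uses injectivity of $f$) with an interpolation between $L^\infty$ and $C^1$ bounds, a scheme that goes back to Dinh--Sibony and was used for Hénon maps by Dinh--Dujardin--Sibony.

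Concretely, first I would fix a smooth normalized positive closed horizontal $(1,1)$-form $S_0$ supported in $\B$, and set $S_n = \Lambda^n S_0$, which is again normalized and horizontal. Choose local potentials $v_n$ on a slightly smaller box $\B'\Subset \B$, with $dd^c v_n = S_n$ and $\int v_n\,\omega^{\dim\B} = 0$. Because $T$ is $\Lambda$-invariant and unique among normalized horizontal positive closed currents in its cohomology class (this is the content of the degree theorem for horizontal-like maps), the sequence $S_n$ converges weakly to $T$, hence the $v_n$ converge in $L^1_{\mathrm{loc}}(\B')$ to a local potential $u$ of $T$.

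The heart of the argument is the exponential decay
\[
\|v_{n+1} - v_n\|_{L^\infty(\B')} \leq C\lambda^n, \qquad \lambda<1.
\]
Since $f$ is Hénon-like, $f_*$ acts on functions by composition with the single-valued inverse $f^{-1}$ (wherever it is defined on $f(\B)$), so $\Lambda$ is just $\frac{1}{d}(\cdot)\circ f^{-1}$ on potentials. Writing $w_n = v_{n+1} - v_n$, the invariance relation yields $w_{n+1} = \Lambda w_n$ up to a pluriharmonic normalization correction, and the horizontality of $S_n$ together with the geometry of $\B$ gives a uniform modulus estimate that is contracted by a definite factor at each step; this is the place where the degree $d\geq 1$ and the absence of multiplicities produce the geometric factor $\lambda$. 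In parallel, because $S_n$ is smooth (the pushforward of a smooth horizontal form by an injective holomorphic map is smooth away from nothing, except for the contraction by $f$), $w_n$ is smooth with $\|w_n\|_{C^1(\B')} \leq K^n$ for some $K$ governed by the Jacobian of $f$.

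Then the interpolation $\|w\|_{C^\beta} \leq 2\|w\|_\infty^{1-\beta}\|w\|_{C^1}^\beta$ gives
\[
\|w_n\|_{C^\beta(\B')} \leq C' \bigl(\lambda^{1-\beta}K^\beta\bigr)^n,
\]
which is summable as soon as $\beta < \log(1/\lambda)/\log(K/\lambda)$. Hence $u = v_0 + \sum_n w_n$ converges in $C^\beta(\B')$, i.e.\ the local potentials of $T$ are Hölder continuous with exponent $\beta$. The main obstacle I expect is justifying the uniform exponential contraction $\|w_{n+1}\|_\infty \leq \lambda\|w_n\|_\infty$: it requires an argument tying the oscillation of a potential of a normalized horizontal current to its mass distribution on horizontal slices, and it is exactly here that the Hénon-like (injective) hypothesis is indispensable, since a non-injective horizontal-like map would concentrate the push-forward on critical values and destroy the contraction.
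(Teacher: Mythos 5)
Your scheme --- write the potential as the telescoping sum $u = v_0 + \sum_n w_n$, establish exponential $L^\infty$-decay of $w_n$ together with at-worst-exponential $C^1$-growth, then interpolate --- is exactly the Dinh--Sibony mechanism that Dujardin follows in the cited Theorem 2.12 of \cite{Du}. The paper's proof is nothing more than that citation, so your proposal is in effect a reconstruction of Dujardin's argument, and its overall architecture is correct.

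There is, however, a genuine gap at the step you yourself flag as the main obstacle, and your choice of normalization is partly responsible. With the integral-zero normalization $\int v_n\,\omega^2=0$, invariance gives $w_{n+1} = \Lambda w_n - c_n$ for some constant $c_n$, and the only a priori estimate is $|c_n|\leq \|\Lambda w_n\|_\infty\leq\frac{1}{d}\|w_n\|_\infty$, so you get at best $\|w_{n+1}\|_\infty\leq\frac{2}{d}\|w_n\|_\infty$, a contraction for $d\geq 3$ but not for $d=2$; more to the point, $c_n$ has no reason to decay faster than $\|w_n\|_\infty$ itself, so it cannot simply be absorbed into the geometric series. The resolution used by Dujardin, and by Dinh--Dujardin--Sibony \cite{dds} --- and invoked by this very paper in Section 5.3 --- is to work with the canonical potential $u_S(z,w)=\int_{\{z\}\times\D}\log|w-s|\,dm^z(s)$ built from the slice measures $m^z$ of the horizontal current. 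This normalization is intrinsic, and under it the push-forward transfer operator satisfies a clean one-sided bound: the would-be pluriharmonic residue is confined near $\partial_v\B$, where the horizontal-like geometry ($f(\partial_v\B)\cap\B=\emptyset$, $f(\overline\B)\cap\partial\B\subset\partial_v\B$) makes it harmless on a slightly smaller box. Your identification of injectivity as indispensable is correct for the right reason: the $L^\infty$ norm of the transfer operator on potentials is $\frac{k}{d}$ where $k$ is the number of $\B$-preimages, which is $1$ for Hénon-like but equals $1$ (giving no contraction) for a horizontal-like map whose topological degree matches its horizontal degree.
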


\begin{proof}
This is identical to Theorem 2.12 in \cite{Du}.
\end{proof}

\begin{defi}
Let $f$ be a horizontal-like map of degree $d$. If the number of preimages in $\B$ of a point in $f(\B)\cap \B$ is strictly less than $d$, $f$ is said to be of small topological degree.  
\end{defi}

\begin{lemme}\label{lemme henon}
For $\varepsilon\neq 0$ small enough, there exists $\rho$ such that the map
$$f[z:w:t]\mapsto[z^2+0.1 w^2:w^2:t^2+\varepsilon(z^2+zw)]$$ is a H\'enon-like map on $f(\B)$ where $\B=\lbrace [z:w:t];0.8|w|<|z|<1.2|w|,|t|<\rho \max(|z|,|w|) \rbrace $.
Furthermore, denoting by $\A=\bigcap f^n(U_\rho)$, a point in $\A \cap \B$ has a unique preimage in $\A$ and this preimage is still in $\A\cap \B$.
\end{lemme}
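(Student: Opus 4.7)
The plan is to set $\rho = 4\varepsilon$ as in Step~1 of the proof of Theorem~\ref{theoreme} (using $\alpha = 1$ for $P = z^2 + 0.1\, w^2$ and $Q = w^2$), so that $f(U_\rho) \Subset U_\rho$ and $\A = \bigcap_n f^n(U_\rho)$ is a well-defined attracting set. Throughout I work in the affine chart $w=1$, so $\B = \{(z,t):\, 0.8 < |z| < 1.2,\, |t| < \rho\max(|z|,1)\}$, $f(z,t) = (z^2+0.1,\, t^2+\varepsilon(z^2+z))$, and the base polynomial $p(\zeta)=\zeta^2+0.1$ is hyperbolic, with Julia set $J$ a quasicircle close to the unit circle and contained in $W:=\{0.8<|\zeta|<1.2\}$.

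The horizontal-like boundary conditions are direct. If $|z|\in\{0.8,1.2\}$ then $|z^2+0.1|\in[0.54,0.74]\cup[1.34,1.54]$, disjoint from $W$, giving $f(\partial_v\B)\cap\B=\emptyset$. For $(z,t)\in\overline\B$, $|t^2+\varepsilon(z^2+z)|\leq 1.44\rho^2+2.64\varepsilon < \rho$ once $\varepsilon$ is small (the ratio $2.64/4 = 0.66 < 1$ leaves room for the $O(\rho^2)$ correction), so $f(\overline\B)\cap\partial\B\subset\partial_v\B$.

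For the injectivity on $f(\B)$ and the uniqueness of the $\A$-preimage, the same preimage computation drives both arguments. The four $f$-preimages of $y=(X,T)$ in $\proj^2$ are $(\pm u_0,\pm v_0^{\pm})$ with $u_0=\sqrt{X-0.1}$ and $(v_0^{\pm})^2=T-\varepsilon u_0(u_0\pm 1)$; the two values of $v^2$ differ by $\pm 2\varepsilon u_0\sim\pm 2\varepsilon$, so for fixed $T$ at most one $u$-branch gives $|v|$ compatible with $U_\rho$. The wrong-$u$ preimages have $|v|\gtrsim\sqrt\varepsilon\gg\rho$, hence lie outside $\B\supset f(\B)$ and outside $U_\rho\supset\A$. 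Among the two right-$u$ preimages $(u_0,\pm v^*)$, a one-step iteration of the preimage formula shows that the ``wrong-$v$'' candidate $(u_0,-v^*)$ has its $f$-preimages $(a,b)$ satisfying $|b|\gtrsim\sqrt\varepsilon\gg\rho$ --- provided $u_0$ stays away from the exceptional locus $u_0\in\{0,\pm 1\}$ where the $O(\varepsilon)$ cancellations degenerate. For $x\in\A\cap\B$ one has $|u_0|\in(\sqrt{0.7},\sqrt{1.3})\subset W$; the exceptional values $\pm 1$ are excluded because they lie in the basin of $\infty$ under $p$, so $\pm 1\notin J\supset\pi(\A\cap\B)$ (a direct backward iteration shows the second preimage of any $(\pm 1,t)$ in $U_\rho$ already leaves $U_\rho$ for $\varepsilon$ small). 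This gives uniqueness of the $\A$-preimage of $x$, and $|v^*|<\rho\max(|u_0|,1)$ places this preimage in $\A\cap\B$.

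The main obstacle is the multi-scale bookkeeping $\rho\sim\varepsilon\ll\sqrt\varepsilon$ combined with the exclusion of the exceptional locus from $\pi(\A\cap\B)$. The latter relies on the hyperbolicity of $p$: the exceptional points $\{0,\pm 1\}$ lie in its basin of infinity and not on the Julia set $J$, and the inductive backward-orbit argument used to construct $\A$ cannot sustain itself there when $\varepsilon$ is small, so the ``degenerate cancellation'' $u_0(u_0\pm 1)\approx 0$ never occurs inside $\A\cap\B$.
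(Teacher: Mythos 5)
Your proof takes the same underlying route as the paper, just unwound into explicit affine formulas: rather than invoking the sets $A$, $B$, $X$, $Y$ from Step 2 of Theorem \ref{theoreme} as the paper does, you work directly with the preimage formula $(\pm u_0,\pm v_0^\pm)$ and the two-scale separation $\rho\sim\varepsilon\ll\sqrt\varepsilon$. The horizontal-like boundary checks and the wrong-$u$-branch exclusion are fine; the latter is precisely the content of the paper's computation $X=\lbrace[0.1:1],[1:0]\rbrace$ together with $X\cap W=\emptyset$.

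There is, however, a genuine gap in the way you rule out the degenerate cancellation. Two points. First, the exceptional locus $\lbrace 0,\pm 1\rbrace$ must be avoided by the \emph{second-level} preimage coordinate $a_0=\sqrt{u_0-0.1}\in f_\infty^{-2}(\pi(x))$, not by $u_0$ itself, and your write-up conflates the two. Second, and more seriously, the justification offered --- that $\pm 1\notin J\supset\pi(\A\cap\B)$ --- is false. Since $\pi(\A)=L_\infty$ (every line through $[0:0:1]$ meets $\A$), one has $\pi(\A\cap\B)=W$, an open annulus that does contain $\pm 1$; it is certainly not contained in the Julia set. The parenthetical alternative (``the second preimage of any $(\pm 1,t)$ in $U_\rho$ already leaves $U_\rho$'') is also unjustified: two-step preimages of $(\pm 1,t)$ with $|t|<\rho$ can perfectly well have small last coordinate, since the $\varepsilon$-term in $t^2=T-\varepsilon z(z+1)$ can cancel against $T$. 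The hyperbolicity of $p$ plays no role in this lemma.

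The correct (and elementary) fix is the one the paper uses in disguise: a chain of backward iterates of $x\in\A\cap\B$ satisfies $a_0\in f_\infty^{-2}(W)$, and $f_\infty^2(\pm 1)=p(1.1)=1.31\notin\overline W$, so $f_\infty^{-2}(W)$ lies at positive distance from $\pm 1$ (the distance from $0$ is automatic since $|a_0|>\sqrt{0.7}$). This is exactly what the paper records as $Y=[1.31:1]$ together with $Y\cap W=\emptyset$. Replacing your Julia-set argument by this one-line observation closes the gap; as written, the proof does not stand.
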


\begin{proof}
Denote by $U_\rho$ the set$U_\rho=\lbrace [z:w:t];|t|<\rho \max(|z|,|w|) \rbrace$, by the above, it follows that for $\varepsilon$ small enough, there exists $\rho$ such that  $f(U_\rho)\Subset U_\rho$.
Denote by $W$ the set $W=\lbrace [z:w];0.8|w|<|z|<1.2|w|\rbrace$, an easy computation shows that $f_\infty^{-1}(W)\Subset W\Subset f_\infty(W)$, where $f_\infty [z:w]\mapsto[z^2+0.1 w^2:w^2]$. 
Thus $f$ is a horizontal-like map on $\B$.
With notation as in Step 2 of Theorem \ref{theoreme}, we get that $X=\lbrace [0.1:1],[1:0] \rbrace$ and $Y=[1.31:1]$. Reducing $\varepsilon$ if necessary, we get that $W\subset A\cap B$ (see Step 2 of Theorem \ref{theoreme}), thus $f$ is injective on $f(\B)$.

As $f$ is H\'enon-like on $f(\B)$, if $[z:w:t]\in \A \cap \B$ then $[z:w:t]\in \A \cap f(\B)$ and $[z:w]\in W$ thus there exists a unique $[z_{-1}:w_{-1}:t_{-1}]\in \A$ such that $f([z_{-1}:w_{-1}:t_{-1}])=[z:w:t]$. Moreover, as $f_\infty^{-1}(W)\Subset W$, $[z_{-1}:w_{-1}] $ is still in $W$.
\end{proof}

\begin{theoreme}\label{th ex Z dense}
There exists $\varepsilon_1,\varepsilon_2\neq 0$ such that the map
$$f:[z:w:t_1:t_2]\mapsto[z^2+0.1 w^2:w^2:t_1^2+\varepsilon_1(z^2+zw):t_2^2+\varepsilon_2 (z^2+zw)]$$
admits a Zariski dense attracting set $\A$ close to the line $\lbrace [z:w:t_1:t_2]; t_1=0 ; t_2=0  \rbrace$.
\end{theoreme}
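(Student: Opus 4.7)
My plan is to exploit the semi-product structure of $f$: for $i=1,2$ the projection $p_i([z:w:t_1:t_2]) = [z:w:t_i]$ intertwines $f$ with the Hénon-like map $F_i$ of Lemma \ref{lemme henon}. I would pull back the attracting $(1,1)$-currents of each $F_i$ to $\proj^3$ and wedge them to obtain a positive closed $(2,2)$-current $T$ with Hölder continuous potentials supported in $\A$. Since $T$ cannot charge pluripolar sets and algebraic hypersurfaces of $\proj^3$ are pluripolar, $\A$ cannot be contained in any such hypersurface, i.e.\ $\A$ is Zariski dense.

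First I would construct a product-type trapping region $U = \{[z:w:t_1:t_2]\in\proj^3 : |t_i|<\rho_i\max(|z|,|w|),\, i=1,2\}$, picking $\rho_i$ (for small $\varepsilon_i$) analogously to \eqref{choix rho} so that $f(U)\Subset U$, and set $\A = \bigcap_n f^n(U)$. The common indeterminacy point $[0:0:0:1]$ of $p_1,p_2$ lies outside $\overline{U}$, so each $p_i$ is holomorphic on a neighborhood of $\overline{U}$. Since the $(z,w)$-dynamics is common to both $F_i$ and the $t_1$-, $t_2$-fiber dynamics are mutually independent given $(z,w)$, one obtains the structural identity
\[
\A = p_1^{-1}(\A_1)\cap p_2^{-1}(\A_2)\cap U,
\]
where $\A_i\subset\proj^2$ is the attracting set of $F_i$.

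Next, by Lemma \ref{lemme henon} each $F_i$ is Hénon-like on $f(\B_i)$ for an appropriate box $\B_i$; Dinh's theorem produces attracting currents $\sigma_i$ on $\proj^2$ supported in $\A_i$, and Theorem \ref{th pot holder} gives that the $\sigma_i$ have Hölder continuous potentials. Consequently $\tau_i := p_i^*\sigma_i$ is a positive closed $(1,1)$-current on a neighborhood of $\overline{U}$ with Hölder continuous potentials, and by Bedford--Taylor the wedge $T := \tau_1\wedge\tau_2$ is a well-defined positive closed $(2,2)$-current that does not charge pluripolar sets. The support inclusion $\supp(T|_U)\subset\supp\tau_1\cap\supp\tau_2\cap U=\A$ follows from the structural identity above. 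A mass estimate, using that each $\sigma_i$ concentrates its full mass in the 2D trapping region $p_i(U)$, shows that $T$ has strictly positive mass in $U$.

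To conclude, suppose for contradiction $\A\subset V$ for some algebraic hypersurface $V\subsetneq\proj^3$. Then $\supp(T|_U)\subset V$; since $V$ is pluripolar and $T$ does not charge pluripolar sets, this forces $T|_U=0$, contradicting positivity of its mass. Hence $\A$ is Zariski dense. The main technical obstacle is the coupled mass/support step: one must track carefully how the global Dinh construction of $\sigma_i$ distributes mass inside $p_i(U)$, so that the wedge $\tau_1\wedge\tau_2$ retains strictly positive mass inside the 3D trapping region $U$ (rather than escaping into $p_i^{-1}(\A_i)\setminus U$), while simultaneously using the structural identity to pin the support of $T|_U$ inside $\A$.
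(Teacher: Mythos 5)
The central step of your argument is the ``structural identity''
\[
\A \;=\; p_1^{-1}(\A_1)\cap p_2^{-1}(\A_2)\cap U,
\]
which you need in order to pin $\supp(T|_U)$ inside $\A$. This identity is false, and the failure is fatal for the approach. Indeed, over a fixed base point $[z:w]$, membership of a point $[z:w:t_1:t_2]$ in $\A=\bigcap_n f^n(U)$ requires, for every $n$, a \emph{common} $n$-th $f_\infty$-preimage $[a:b]$ of $[z:w]$ such that the fiberwise preimages lie simultaneously in $U_1$ and $U_2$. In the natural-extension description used in the paper (via $\pi,\pi_1,\pi_2:\hat\proj^1\to\A,\A_1,\A_2$), $\A$ is the image of a \emph{single} copy of $\hat\proj^1$: $\A=\{(\pi_1(\hat p),\pi_2(\hat p))\}$, a ``diagonal'' indexed by one backward itinerary. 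In contrast, $p_1^{-1}(\A_1)\cap p_2^{-1}(\A_2)\cap U$ is the fiber product $\{(\pi_1(\hat p),\pi_2(\hat q))\}$ over the common $(z,w)$-projection, which allows two \emph{independent} backward itineraries $\hat p\neq\hat q$. On the H\'enon-like region $V$, where the $\pi_i$ are injective, the transversal of $\A$ over a point of $W$ is a Cantor set homeomorphic to $\{1,2\}^\N$, while the transversal of the fiber product is homeomorphic to $\{1,2\}^\N\times\{1,2\}^\N$; so the inclusion $\A\subsetneq p_1^{-1}(\A_1)\cap p_2^{-1}(\A_2)\cap U$ is strict. Concretely, writing $\tau_i$ as an average of currents of integration over disks $D_i\subset\A_i$ (Dinh's woven structure), $\tau_1\wedge\tau_2$ is, heuristically, an average of the curves $p_1^{-1}(D_1)\cap p_2^{-1}(D_2)$ over \emph{independent} choices of leaves $D_1,D_2$; for most pairs these curves correspond to uncoupled itineraries and hence lie outside $\A$. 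Thus even granting that $T=\tau_1\wedge\tau_2$ is nonzero on $U$ (the mass estimate you flag as delicate, and which also requires care near the indeterminacy point $[0:0:0:1]$ of the $p_i$), you would only conclude Zariski density of the fiber product, not of $\A$.

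The paper takes a genuinely different route that exploits the graph structure rather than fighting it. Using Theorem \ref{th pot holder} and Lemma \ref{lemme henon} it shows that in the H\'enon-like region the local Hausdorff dimension of $\supp(\tau_1)$ is at least $2+\theta_1$ for some $\theta_1>0$, while by Forn\ae ss--Sibony one can choose $\varepsilon_2$ so small that $\dim_H(\A_2)<2+\theta_1/2$. The natural-extension maps $\varphi_i=\pi\circ\pi_i^{-1}:\A_i\cap V_i\to\A\cap V$ are homeomorphisms with $pr_i\circ\varphi_i=\id$, so $\A\cap V$ is simultaneously a graph over $\A_1$ and over $\A_2$. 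If $\A$ were contained in a hypersurface $M$, a local sheet $D\subset M$ would contain a piece of $\A$ of Hausdorff dimension $\geq 2+\theta_1$; projecting by $pr_2$ either preserves this dimension (contradicting the upper bound on $\dim_H\A_2$) or collapses $D$ to a curve, forcing $\A_2$ locally into a curve (contradicting $\dim_H\A_2>2$). If you want to keep a current-theoretic flavor, the object to use would be the $(2,2)$ attracting current $\tau$ of Dinh's theorem for $f$ itself, which is genuinely supported on $\A$ -- but then the relevant regularity question concerns super-potentials of a bidegree $(2,2)$ current, which is a much harder notion than what you get from wedging two $(1,1)$ currents with H\"older potentials.
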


\begin{notation}
Denote by $pr_1,pr_2$ the projections defined by $pr_1([z:w:t_1:t_2])=[z:w:t_1]$ and $pr_2([z:w:t_1:t_2])=[z:w:t_2]$.
\end{notation}

\begin{proof}
For $i=1,2$ denote by $f_i$ the map $f_i:[z:w:t_i]\mapsto[z^2+0.1 w^2:w^2:t_i^2+\varepsilon_1(z^2+zw)]$, by $U_i$ and $V_i$ the sets $U_i=\lbrace [z:w:t_i];|t_i|<\rho_i\max(|z|,|w|) \rbrace$ and $V_i=\lbrace [z:w:t_i];0.8|w|<|z|<1.2|w|,|t_i|<\rho_i \max(|z|,|w|) \rbrace $.
We choose $\varepsilon_i$ and $\rho_i$ such that $f_i(U_i)\Subset U_i$. Denote by $\A_i=\bigcap f_i^n(U_i)$.
and by $\tau_i$ the positive closed current given by Theorem \ref{th dinh} for $f_i$. 
By Theorem \ref{th pot holder} and Lemma \ref{lemme henon}, there exists $\theta_i>0$ such that, in any neighbourhood $N$ of a point in $\A_i\cap V_i$, $\supp(\tau_i)\cap N$ has Hausdorff dimension at least $2+\theta_i$. 
On the other hand, J.E. Forn\ae ss and N. Sibony \cite[Prop. 2.17]{FSdyn} proved that if  $\varepsilon_2$ is small enough then $\A_2$ has Hausdorff dimension less than $2+\frac{\theta_1}{2}$.

Denote by $U$ the set $U=\lbrace [z:w:t_1:t_2];|t_1|<\rho_1\max(|z|,|w|),|t_2|<\rho_2\max(|z|,|w|) \rbrace$, thus $f(U)\Subset U$.
The attracting set $\A=\bigcap f^n(U)$ is of dimension 1, i.e. of codimension 2.
We have that $pr_1(\A)=\A_1$ and $pr_2(\A)=\A_2$, thus $\A$ is non algebraic. 
We will use the discrepancy between the Hausdorff dimension of $\A_1$ and $\A_2$ to show that $\A$ must be Zariski dense.

Denote by $(\hat{\proj}^1,\hat{f}_\infty)$ the natural extension of $(\proj^1, f_\infty) $ where $f_\infty:[z:w]\mapsto [P(z,w):Q(z,w)]$, see section 3 of \cite{R} for an introduction.
Let $p=[z:w]\in \proj^1$ and denote by $l_p$ the line (resp. hyperplane) passing through $[0:0:1]$ and $[z:w:0]$ (resp. $[0:0:1:0]$, $[0:0:0:1]$ and $[z:w:0:0]$). We have already seen that $l_p\cap U_i$ is a disk and its image is relatively compact in $l_{f_\infty (p)}\cap U_i$, see Step 1 of Theorem \ref{theoreme}.
Thus if $(p_{-n})\in \hat{\proj}^1$ is a sequence of preimages of $p$ then $\bigcap_{n\geq 0} f_i^n (l_{p_{-n}}\cap U_i)$ is a unique point in $l_p\cap \A_i$. 
Therefore, this defines a continuous and onto map ${\pi_i}: \hat{\proj}^1 \rightarrow \A_i$, see \cite[Prop. 4.13]{R} or \cite[Lemma 2.8]{FSdyn}.
We have that if $p_{-n}=[z_{-n}:w_{-n}]$ then 
$\pi_i((p_{-n}))=\underset{n\rightarrow \infty}{\lim} f_i^n([z_{-n}:w_{-n}:0])$.
 We can also define a continuous and onto map ${\pi}: \hat{\proj}^1 \rightarrow\A$ by replacing disks by balls in the previous argument. This map satisfies $pr_i\circ \pi=\pi_i $, for $i\in \lbrace 1;2\rbrace$. We have the three commutative diagrams :

$$
\xymatrix{
\hat{\proj}^1 \ar[r]^{\hat{f}_\infty} \ar[d]^{\pi_1} & \hat{\proj}^1\ar[d]^{\pi_1}\\
\A_1 \ar[r]^{f_1}  & \A_1 \\
}
\qquad
\qquad
\xymatrix{
\hat{\proj}^1 \ar[r]^{\hat{f}_\infty} \ar[d]^{\pi_2} & \hat{\proj}^1\ar[d]^{\pi_2}\\
\A_2 \ar[r]^{f_2} & \A_2 \\
}
\qquad
\qquad
\xymatrix{
\hat{\proj}^1 \ar[r]^{\hat{f}_\infty} \ar[d]^\pi & \hat{\proj}^1\ar[d]^\pi\\
\A \ar[r]^{f} & \A \\
}
$$

By Lemma \ref{lemme henon}, $f_i$ is H\'enon-like on $f_i(V_i)$ thus $\pi_i$ restricted to $\pi_i^{-1}(V_i\cap \A_i)$ and $\varphi_i = \pi\circ \pi_i^{-1}: \A_i\cap V_i \rightarrow \A\cap V$ are homeomorphisms, where 
$V=\lbrace [z:w:t_i];0.8|w|<|z|<1.2|w|,|t_1|<\rho_2 \max(|z|,|w|), |t_2|<\rho_2 \max(|z|,|w|) \rbrace $. Moreover, as $\pi_i((p_{-n}))=\underset{n\rightarrow \infty}{\lim} f_i^n([z_{-n}:w_{-n}:0])$ and $\pi((p_{-n}))=\underset{n\rightarrow \infty}{\lim} f^n([z_{-n}:w_{-n}:0:0])$, we have that $pr_i\circ \varphi_i=\id$.

%
%\begin{equation}\label{unicite point}
% \forall [\alpha:\beta:\gamma_2]\in \A_2\cap V_2, \exists !\gamma_1 : [\alpha:\beta:\gamma_1:\gamma_2]\in \A \\
% \text{ and } \varphi([\alpha:\beta:\gamma_2])= [\alpha:\beta:\gamma_1].
%\end{equation}
%In other words, $\varphi([\alpha:\beta:\gamma_2])$ is the image by $pr_1$ of the unique point in the intersection of $\A$ and $ pr_2^{-1}([\alpha:\beta:\gamma_2])$.

Suppose that the Zariski closure of $\A$ is a hypersurface $M$.
As $pr_i(\A) = \A_i$ is non algebraic, $M$ cannot be the union of hyperplane passing through $[0:0:0:1]$

%Let $[\alpha:\beta]\in \lbrace [z:w];0.8|w|<|z|<1.2|w|\rbrace$ and $[\alpha:\beta:\gamma_1:\gamma_2]\in \A$, such that there exists
Let $W_1$ be an open set contained in $V_2$ such that 
$M\cap pr_1^{-1}(W_1)$ is the union of biholomorphic copies of $W_1$ included in the regular points of $M$.
 At least one of these copies intersect $\A$ 
in a set of Hausdorff dimension at least  $2+\theta_2 $, because $W_1\cap\A_1$ has Hausdorff dimension at least $2+\theta_2 $. 
Denote it by $D$ and
denote by $\Cr$ the irreducible component of $M$ which contains $D$.

If $pr_2(\Cr)$ is not a curve then, up to reducing $W_1$, $pr_{2|D}$ is a biholomorphism. 
Thus $\dim_H(pr_2(C\cap\A))\geq 2+\theta_2 $, but this contradicts  $pr_2(C\cap\A)\subset \A_2$ and 
$\dim_H(\A_2)\leq 2+\frac{\theta_1}{2}$.

Otherwise, let $p\in \A\cap \Cr$ and $W$ be an open set such that $p\in W\cap\Cr=W\cap M\subset D$. 
 As $\varphi_2$ is continuous there exists an open set $W_2\subset pr_2(W)$
such that $pr_2(p)\in W_2 $ and $\varphi_2(W_2\cap\A_2)\subset W$. Moreover, $\varphi_2(W_2\cap\A_2)\subset M$
 so $\varphi_2(W_2\cap\A_2)\subset \Cr$. Then $W_2\cap\A_2=pr_2(\varphi_2(W_2\cap\A_2))$ is included in the
  curve $pr_2(\Cr)$ but this contradicts the fact that $\dim_H (pr_2(W)\cap\A_2)\geq 2+\theta_2>2$.

We have reached a contradiction.
Therefore, $\A$ is Zariski dense in $\proj^3$.
\end{proof}
\begin{rmq}
For each $n\in \N$ the point $[1:0]$ has multiplicity $2^n$ for $f_\infty^n$ thus $f$ is not of small topological degree on $\A$.
\end{rmq}

\section{Further results and open problems}

\subsection{A simpler version of Proposition \ref{prop petit deg top}}\label{section version simple}
The original generic condition that we had in mind for small topological degree map on an attracting set (Proposition \ref{prop petit deg top})   was somewhat simpler. Unfortunately, we were only able to find corresponding examples in degree 2.

Indeed, Condition \ref{cond pt qui retombe} of Proposition \ref{prop petit deg top} can be replaced by the algebraic condition

\ref{cond pt qui retombe}'. $f^{-1}(Z)\cap Z = \emptyset$.
\\
 It is clear from the proof of Proposition \ref{prop petit deg top} that this implies that for all $|\varepsilon|>0$ small enough there exists $\rho$ such that $f^2$ is of small topological degree on $f(U_\rho)$.

\begin{ex} 
The map $f=[w^2+a z^2:b w^2+z^2:t^2 + \varepsilon (z^2+2zw+w^2)]$ satisfies the hypothesis \ref{cond ens. distincts} of the Proposition \ref{prop petit deg top} and \ref{cond pt qui retombe}' for almost every $(a,b)\in \C^2$.
In fact, we have that $X_{-1}=\lbrace  [0:1],[1:0]  \rbrace$ thus $X = \lbrace [1:b],[a:1]  \rbrace$ and $Y_{-2}=\lbrace  [i:1],[i:-1],[1:-1]   \rbrace$ thus
$$Y=f_\infty ^2 (Y_{-2})=\lbrace  [(1+b)^2+a(1+a)^2:b(b+1)^2+(1+a)^2],[(b-1)^2+a(1-a)^2:b(b-1)^2+(1-a)^2]  \rbrace.$$
Therefore, for almost every $(a,b)\in \C^2$, $X $ and $ Y$ are disjoint, i.e. $f$ satisfies the hypothesis \ref{cond ens. distincts} of \ref{prop petit deg top} and as $Z= X\cup Y$
$$Z=
\lbrace  [1:b],[a:1],[(1+b)^2+a(1+a)^2:b(b+1)^2+(1+a)^2],[(b-1)^2+a(1-a)^2:b(b-1)^2+(1-a)^2]  \rbrace$$
and
$$f_\infty ^{-1}(X \cup Y)=
\lbrace  [0:1],[1:0],[1+a:b+1],[1-a:b-1],[1+a:-b-1],[1-a:-b+1]  \rbrace,$$
for almost every $(a,b)\in \C^2$,
we have that $f^{-1}(Z)\cap Z=\left( f_\infty ^{-1}(X \cup Y)  \right) \bigcap \left( X \cup Y \right)=\emptyset$, thus  $f$ satisfies the condition \ref{cond pt qui retombe}'.
\end{ex}

\subsection{Generalisation of Theorem \ref{theoreme} to higher dimension}\label{section generalisation}

To extend  Theorem \ref{theoreme} to higher dimension, we consider the family of endomorphisms of $\proj^k$ of the form 
$$f:[z_0:..:z_k]\mapsto [P_0(z_0:..:z_{k-1}):..:P_{k-1}(z_0:..:z_{k-1}):z_k^d+\varepsilon P_{k}(z_0:..:z_{k-1})], $$
where $P_0,..,P_k$ are homogeneous polynomials of degree $d\geq 2$ such that $(0,..,0)$ is the single common zero of $P_0,..,P_{k-1}$. 
Let $U_\rho=\lbrace [z_0:..:z_k] ; |z_k|<\rho \max (|z_0|,..,|z_k|)\rbrace$, then if $\varepsilon, \rho$ are small enough we have that $f(U_\rho)\Subset U_\rho$. 
Denote by $f_\infty$ the map $$f_\infty: [z_0:..:z_{k-1}]\mapsto [P_0(z_0:..:z_{k-1}):..:P_{k-1}(z_0:..:z_{k-1})],$$
 $$f_\infty ^+: [z_0:..:z_{k-1}]\mapsto [P_0(z_0:..:z_{k-1}):..:P_{k-1}(z_0:..:z_{k-1}):\varepsilon P_{k}(z_0:..:z_{k-1})].$$
 
 We define $Y$ as before Proposition  \ref{prop petit deg top} and $$X= \left\lbrace p\in \proj^{k-1} \, |\, \card\lbrace f_\infty ^+(p_{-1})  \, |\, p_{-1}\in f_\infty^{-1}(p)\rbrace \leq d^{k-2} \right\rbrace.$$
These are algebraic set of $\proj^{k-1}$. Moreover, for a generic choice of $P_0,..,P_k$, the set $Y$ is  of codimension 1 and $X$ is of dimension 0. 
We may replace the conditions of Proposition \ref{prop petit deg top} by 
\begin{enumerate}
\item $X \cap Y=\emptyset$,
\item  and $\left( f_\infty ^{-(k-1)} ({X\cup Y})  \right) \bigcap .. \bigcap \left( f_\infty ^{-1} ({X\cup Y})  \right) \bigcap (X\cup Y) = \emptyset$.
\end{enumerate}
Following the idea of the proof of Proposition \ref{prop petit deg top}, we show that this implies that $f^{k}$ is of small topological degree on $f(U_\rho)$. 
Therefore, we get a Zariski open set $\Omega$ of mappings asymptotically of small topological  degree on attracting sets in $\proj^k$.
Nevertheless, it is unclear how to find explicit examples of parameters in $\Omega$ to ensure that it is not empty.

\subsection{Non-pluripolar attracting sets with unbounded potentials}

The last remark around Theorem \ref{theoreme} is that it is not necessary to be of small topological degree ``everywhere" on an attracting set in order for it to be non pluripolar.
Furthermore, there exists non-pluripolar attracting sets that cannot support a current of bidegree (1,1) of mass 1  with a bounded quasi-potential.

Let $f$ be an endomorphism of the form 
\begin{equation}\label{def f polynome}
f([z:w:t])=[w^d P\left(\dfrac{z}{w}\right):w^d:t^d+\varepsilon R(z,w)]
\end{equation}
where $P$ is a polynomial of degree $d$ and $R$ is a homogeneous polynomial of degree $d$. 
In particular, it is of the form \eqref{def f}.
On the other hand, $f$ is not asymptotically of small topological degree  and the attracting current
 $\tau$ has a unbounded quasi-potential because $f^{-1}(l_{[1:0]})= \lbrace l_{[1:0]} \rbrace$, hence $\A\cap l_{[1:0]}$ is reduced to a point.
 
 Nevertheless, there exists a bidisk of the form $\lbrace [z:w:t]; |z|\leq R|w|,|t|<\rho \max (|z|,|w|)  \rbrace$ where $f$ is horizontal-like.
 Moreover, we can adapt the proof of Theorem \ref{theoreme} to arrange that this horizontal-like map is of small topological degree.
For this, we replace the generic conditions of Proposition \ref{prop petit deg top} by the following :
\begin{enumerate}
\item \label{cond 1 non bourne} $X\cap Y= \emptyset$
\item \label{cond 2 non bourne} and $f_\infty^{-2}(Z)\cap f_\infty^{-1}(Z)\cap Z = \lbrace [1:0] \rbrace$, with $Z=X\cup Y$.
\end{enumerate}

These properties are clearly satisfied outside some algebraic set and it can be shown that for $c\neq 0$ small enough, $f=[w^d+c z^d: w^d:t^d + \varepsilon (z^d+z^{d-1}w)]$ satisfies these conditions.

Finally, we adapt the proof of Theorem \ref{th potentiel bourne} to horizontal-like maps using the canonical potential.
We recall that it is defined as follows (see \cite{dds} for details).
If $S$ is a horizontal current, i.e. with support in $\D\times \D_{1-\varepsilon}$, its canonical potential is defined by 
$$u_S (z,w)= \int_{\lbrace z \rbrace \times \D} \log |w-s| \, \text{d}m^z(w)$$
with $m^z= S\wedge [\lbrace z \rbrace \times \D]$.
Thus we finally get :

\begin{theoreme}
If $f$ is of the form \eqref{def f polynome}, then for a generic choice of $P,R$ there exists $\varepsilon (P,R)>0$ such that if $\varepsilon(P,R)>|\varepsilon|> 0$ then $f$ admits a non pluripolar attracting set.
\end{theoreme}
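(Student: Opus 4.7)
The plan is to carry out the programme that the paper itself sketches: obtain the attracting set, isolate a bidisk $\B \subset U_\rho$ on which $f$ is horizontal-like of small topological degree, and finally adapt Theorem~\ref{th potentiel bourne} using the canonical potential (instead of the Fubini--Study--normalized quasi-potential) to produce a current supported in $\A\cap\B$ whose canonical potential is bounded, hence non pluripolar.

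First, I would establish the attracting set. Since $Q(z,w)=w^d$ has only $(0,0)$ as common zero with $P(z,w)=w^dP(z/w)$, the triple lies in $\F_d$ and Step~1 of the proof of Theorem~\ref{theoreme} yields an attracting set $\A=\bigcap_n f^n(U_\rho)$ for $\rho=4\varepsilon/\alpha$ and $\varepsilon$ small. Next I would localise the horizontal-like region: in the chart $w=1$, $f_\infty$ acts as the polynomial $z\mapsto P(z)$, whose filled Julia set sits inside some disc $\overline{\D}_{R_0}$. Choose $R>R_0$ with $|P(z)|>R$ for $|z|=R$, and set $\B=\{[z:w:t];\, |z|\le R|w|,\ |t|<\rho\max(|z|,|w|)\}$. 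Then the vertical boundary $\partial_v\B=\{|z|=R|w|\}\cap \overline{U}_\rho$ escapes ($f(\partial_v\B)\cap\B=\emptyset$) and $f(\overline{\B})\cap\partial\B\subset \partial_v\B$, making $f|_\B$ horizontal-like of some degree $d_h$ in the sense of Section~4.

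Second, I would adapt Proposition~\ref{prop petit deg top} with the relaxed conditions~(\ref{cond 1 non bourne}), (\ref{cond 2 non bourne}). The combinatorial analysis of the sets $A,B,X,Y$ is identical, but one needs only control preimages inside $\B$ rather than $U_\rho$; the single exceptional point $[1:0]$ tolerated by (\ref{cond 2 non bourne}) corresponds precisely to the escape direction $|z|\gg|w|$ of the polynomial $P$, which lies outside the horizontal window $\{|z|\le R|w|\}$ and whose preimages under $f_\infty$ never re-enter it. Repeating the case analysis of Section~\ref{section exemples} thus yields $\operatorname{card}(f^{-2}(p)\cap\B)<d_h^2$ for $p\in f^2(\B)\cap\B$, so that $f^2|_\B$ is of small topological degree as a horizontal-like map. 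Zariski genericity of the two conditions is the same exercise as Step~3 of Theorem~\ref{theoreme}, and the candidate $f=[w^d+cz^d:w^d:t^d+\varepsilon(z^d+z^{d-1}w)]$ verifies them for $c\neq 0$ small by a direct computation (very close to the example in Section~\ref{section version simple}).

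The central new ingredient is a horizontal-like version of Theorem~\ref{th potentiel bourne}. For a horizontal positive closed current $S$ of bidegree $(1,1)$ on $\B$, one uses the canonical potential $u_S(z,w)=\int \log|w-s|\,\mathrm{d}m^z(s)$ from \cite{dds}, which plays the role of the quasi-potential. The plan is to establish an analogue of Lemma~\ref{lemme potenitel bourne}:
\[
\bigl\|u_{\tfrac{1}{d_h}f_*S,R}\bigr\|_\infty \le \frac{d_t}{d_h}\|u_{S,R}\|_\infty+\bigl\|u_{\tfrac{1}{d_h}f_*R,R}\bigr\|_\infty + c,
\]
for horizontal $S,R$ with bounded canonical potentials, where $d_t<d_h$ is the small topological degree on $\B$. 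The separation of preimages into those that stay in $\B$ and those that escape mimics the argument of Lemma~\ref{lemme potenitel bourne}; outside $\B$ the canonical potential is pluri-harmonic in the vertical direction, so the escaping contributions contribute only a bounded error. Iterating yields cluster values with bounded canonical potential, which forces $\A\cap\B$ to be non-pluripolar and therefore $\A$ is non-pluripolar.

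The main obstacle is making the adaptation of Lemma~\ref{lemme potenitel bourne} rigorous. Unlike in $\proj^k$, where pluri-harmonic functions outside a compact set are globally bounded by compactness of $\proj^k$ (and one can invoke Corollary~3.1.4 of \cite{H}), the bidisk $\B$ is not compact and the horizontal boundary $\partial_h\B$ is genuinely present; the uniformity of the error constant must be proven locally using the horizontal-like structure of $f$ rather than a global compactness argument. A secondary subtlety is that Dinh's Theorem~\ref{th dinh} does not apply verbatim in this setting, so one must either content oneself with cluster values (which suffices for non-pluripolarity) or invoke convergence results specific to horizontal-like mappings from \cite{dds,Du}.
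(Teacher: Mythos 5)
Your proposal follows the paper's own argument essentially verbatim: the same horizontal-like bidisk $\{|z|\leq R|w|\}$, the same relaxed genericity conditions with $[1:0]$ as the single tolerated exceptional point, the same explicit candidate $[w^d+cz^d:w^d:t^d+\varepsilon(z^d+z^{d-1}w)]$, and the same adaptation of Theorem \ref{th potentiel bourne} via the canonical potential of \cite{dds}; the analytic lemma you flag as the main obstacle is left at exactly the same level of sketch in the paper itself. The only quibble is that your conditions parallel those of Proposition \ref{prop petit deg top} (not the simplified condition of Section \ref{section version simple}), so they yield small topological degree for $f^3$ rather than $f^2$ on the horizontal-like region, which changes nothing in the conclusion.
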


\subsection{Other open questions}

There are still two questions left related to Theorem \ref{th potentiel bourne}.

\begin{itemize}
\item In the light of Theorem \ref{th att cont in hyperplan}, it would be interesting to find the right hypothesis ensuring non pluripolarity of attracting set in higher codimension.
\item Does an attracting set of codimension 1 support a current with continuous quasi-potential ?
Can we prove the convergence of the quasi-potentials ? 
\end{itemize}

%\pagebreak

\end{document}